\newtheorem{theorem}{Theorem}[section]
\newtheorem{proposition}{Proposition} [section]
\newtheorem{definition}{Definition}[section]
\newtheorem{lemma}{Lemma}[section]
\numberwithin{equation}{section}
\newtheorem{remark}{Remark}[section]
\newcommand{\T}{\mathbb T}
\newcommand{\norm}[1]{\left\|#1\right\|}
\newcommand{\R}{\mathbb{R}}
\newcommand{\mc}{\mathcal}
\newcommand{\pa}{\partial}
\newcommand{\ip}[2]{\left<{#1},{#2}\right>}
\newcommand \F{\mathcal S}
\newcommand\Y{\mathcal{Y}}
\newcommand\V{\mathcal{V}}
\title[Controllability of some fourth-order parabolic equations by Multiplicative Force]{Small-time global controllability of a class of bilinear fourth-order parabolic equations}
\author[Subrata Majumdar \,\,  and \, \,Debanjit Mondal]{Subrata Majumdar$^{*\dagger{\orcidlink{0000-0001-6724-6943}}}$ \, and \, Debanjit Mondal$^{\ddagger{\orcidlink{0009-0002-8709-3774}}}$ }
\thanks{$\dagger$ Instituto de Matem\'{a}ticas, Universidad Nacional Aut\'{o}noma de M\'{e}xico, Circuito Exterior C.U., C.P.04510 CDMX,
	M\'{e}xico (email: subrata.majumdar@im.unam.mx, subratamajumdar634@gmail.com)}
\thanks{$^{\ddagger}$Indian Institute of Science Education and Research Kolkata, Campus road, Mohanpur, West Bengal 741246, India (email: wrmarit@gmail.com, dm20ip005@iiserkol.ac.in)}
\begin{document}
\begin{abstract}
In this work, we address the small-time global controllability properties of a class of fourth-order nonlinear parabolic equations driven by bilinear controls posed on the one-dimensional torus. 
The controls depend only on time and act through a prescribed family of spatial profiles. Our first result establishes the small-time global approximate controllability of the system using three scalar controls, between states that share the same sign. This property is obtained by adapting the geometric control approach to the fourth-order setting, using a finite family of frequency-localized controls. We then study the small-time global exact controllability to non-zero constant states for the concerned system. This second result is achieved by analyzing the null controllability of an appropriate linearized fourth-order system and by deducing the controllability of the nonlinear model through a fixed-point argument together with the small-time global approximate control property.
\end{abstract}

\keywords{Global controllability, Geometric control theory, Saturation property, Bilinear control, Moment problem}
\subjclass{93B05, 93C10, 35K55, 35Q93}
\date{\today}
\allowdisplaybreaks
\maketitle

\begingroup
\hypersetup{linkcolor=black}
\tableofcontents
\endgroup


\section{Introduction}
\subsection{System under study}
 In this paper, we study global controllability aspects of the following fourth-order nonlinear parabolic equations within a \emph{multiplicative control} framework, defined on the one-dimensional torus $\mathbb{T}:= \mathbb{R}/ 2\pi \mathbb{Z}$
\begin{align}\label{intro_main}
\begin{cases}
\partial_t u(t,x) +\nu_1 \partial_{x}^4 u(t,x) +  \nu_2\partial_{x}^2u(t,x)+\mc{N}(u)(t,x) = Q(t,x) u(t,x), &  t>0, \, x \in  \mathbb{T},\\
u(0, x) = u_0(x), & x \in \mathbb{T},
\end{cases}
\end{align}
where $\nu_1 > 0$ and $\nu_2 > 0$ are real parameters, with $\nu_1$ corresponding to a fourth-order diffusion term, while $\nu_2$ represents an anti-diffusion parameter. $Q$ is a function which plays the role of the multiplicative control and is specified precisely in \eqref{ctrl_form}–\eqref{fourier_mods}.
Depending on the choice of the nonlinear term $\mathcal{N}(u)$, this setting encompasses several classical fourth-order nonlinear parabolic models. In this work, we focus on two prototypical nonlinearities, namely the \emph{Kuramoto–Sivashinsky} \cite{KT75,KT76,Tad86} and and the \emph{Cahn–Hilliard} \cite{CH58,CH59,Mir19}: 
\begin{align}
   \label{ks}\mathcal{N}_{KS}(u) &:= u\partial_x u,\\
   \label{ch}\mathcal{N}_{CH}(u) &:= -\partial_x^2(u^3).
   \end{align}
Our approach also applies, with minor modifications, to other representative examples, such as the \emph{Sivashinsky equation} $\mathcal{N}_{Siv}(u):=-\partial_x^2(u^2)$ \cite{Siv83,NG95} and \emph{semilinear fourth-order parabolic equations} of the form $\mathcal{N}_{sem}(u):=u^\gamma$, $\gamma\in\mathbb{N}^*$ \cite{DR98}.

Multiplicative control problems (see \cite{Kh10}) naturally arise in the modeling of distributed systems where the control acts by modifying intrinsic properties of the medium rather than through external forcing. In such situations, the control enters the evolution equation in a multiplicative way, leading to nonlinear control systems even when the underlying dynamics are linear. This type of control is particularly relevant for higher-order parabolic equations, which appear in the modeling of pattern formation, phase separation, and interfacial dynamics, and for which additive control mechanisms may be insufficient. From an applied perspective, classical additive controls are suitable only for processes whose intrinsic physical properties remain unaffected by the control action, modeling instead the influence of externally imposed forces or sources. This framework, however, fails to capture a wide range of emerging and established technologies, such as smart materials and various biomedical, chemical, and nuclear reaction systems, whose fundamental parameters (e.g., frequency response or reaction rates) can be deliberately modified through controlled mechanisms, such as catalytic effects. A notable example of a parabolic system with multiplicative control is the distributed parameter model studied by Lenhart and Bhat \cite{Lenhart_1992}, motivated by wildlife damage management problems involving the regulation of diffusive small mammal populations.


 \subsection{Small-time global approximate controllability}
 In this paper, we are concerned with the \emph{global controllability} properties of the fourth-order parabolic system \eqref{intro_main}. Our main motivation is the following question: given initial and target states $u_0$ and $u_1$, and a final time $T>0$, does there exist a shorter time $\tau\in(0,T]$ and a control $Q$ such that the corresponding solution $u$ of \eqref{intro_main}, starting from $u_0$, reaches an arbitrarily small neighborhood of $u_1$ at time $\tau$, in an appropriate norm?
This question naturally leads to the notion of \emph{small-time approximate controllability}. Before introducing this concept, we first define the controlled evolution problem explicitly.
To this end, we consider the following control system:
\begin{align}\label{ctrl_prblm}
\begin{cases}
\partial_t u(t,x) + \partial_{x}^4 u(t,x) +  \partial_{x}^2u(t,x)+\mc{N}(u)(t,x) = Q(t,x) u(t,x), &  t>0, \, x \in \mathbb{T},\\
u(0, x) = u_0(x), & x \in \mathbb{T}.
\end{cases}
\end{align}
For simplicity, we fix $\nu_1=\nu_2=1$ in \eqref{intro_main}. The extension to arbitrary positive values of these parameters follows from the same analysis. We now specify the class of nonlinearities considered in this work.
In what follows, we focus on the cases \eqref{ks} and \eqref{ch}, that is, we take $\mathcal N$ to be either $\mathcal N_{KS}$ or $\mathcal N_{CH}$. Detailed proofs are provided only for these two nonlinearities, since the other cases mentioned above can be treated by similar arguments.
$Q$ is a function which plays the role of the multiplicative control. We control the system through low-mode forcing, meaning that the control function $Q$ is assumed to have the form
\begin{align}\label{ctrl_form}
Q(t, x) = \Big\langle p(t), \mu (x) \Big\rangle = \displaystyle \sum_{i=1}^{3} p_i(t) \mu_i(x),
\end{align}
where $\mu_i$ are chosen to be the first real Fourier modes on the torus, namely 
\begin{align}\label{fourier_mods}
\left(\mu_1(x), \mu_2(x), \mu_3(x)\right) := \left(1, \, \cos x, \, \sin x \right),
\end{align} and $p = (p_1, p_2, p_3) \in L^2_{\mathrm{loc}}(\mathbb{R}^{+}; \mathbb{R}^{3})$ consists of piecewise constant control laws that can be chosen freely. Thus, determining the scalar controls $p_i(t)$ is sufficient to define the control function $Q(t,x).$ This type of multiplicative control $Q$ is also referred to as \emph{bilinear control}, since only the time-dependent intensity acts as the control variable of the evolution, and the corresponding term depends linearly on it.

We introduce below the notion of {small-time approximate controllability} relevant to our analysis.
\begin{definition}\label{defn_approx_ctrl}
Let $H \subset L^2(\mathbb{T})$ be a Hilbert space. Assume that $u_0, u_1 \in H,$ and $T > 0, \, \varepsilon >0,$ are given. 
 \begin{enumerate}[(A)]
\item\label{1} The equation \eqref{ctrl_prblm}--\eqref{fourier_mods} is said to be {small-time $L^2$-approximately controllable}, if  there exists $\tau \in (0, T]$ and a control law $p \in L^2((0,\tau) ;\,\mathbb{R}^{3})$ such that the solution $u$ of \eqref{ctrl_prblm}--\eqref{fourier_mods} associated with the control $p$ and initial condition $u_0$ satisfies $$\left\| u(\tau) - u_1 \right\|_{L^2(\T)} < \varepsilon.$$
	\item\label{2}   The equation \eqref{ctrl_prblm}--\eqref{fourier_mods} is said to be {$H$-approximately controllable}, if  there exists  a control law $p \in L^2((0,T) ; \,\mathbb{R}^{3})$ such that the solution $u$ of \eqref{ctrl_prblm}--\eqref{fourier_mods} associated with the control $p$ and initial condition $u_0$ satisfies $$\left\| u(T) - u_1 \right\|_{H} < \varepsilon.$$
\end{enumerate}
\end{definition}
To formulate our main result, let us denote by $H^s(\T)$ the usual Sobolev space defined on the one-dimensional torus $\T$ (see \Cref{sec_function}). Our first main result is the following.

\begin{theorem}[Small-time global approximate controllability]\label{main_thm_approx}
Let $s>\tfrac12$ and $u_0, u_1 \in H^{s}(\T)$. Then the controlled system \eqref{ctrl_prblm}--\eqref{fourier_mods} enjoys the following small-time approximate controllability properties:
\begin{enumerate}[(A)]
\item\label{point_11} If $\operatorname{sign}(u_0)=\operatorname{sign}(u_1)$, then \eqref{ctrl_prblm}--\eqref{fourier_mods} is small-time $L^2$-approximately controllable. 
\item\label{point_12} If $u_0, u_1>0$ (respectively, $u_0, u_1<0$), then \eqref{ctrl_prblm}--\eqref{fourier_mods} is $H^s$-approximately controllable.
\end{enumerate}
\end{theorem}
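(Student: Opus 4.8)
The plan is to establish \Cref{main_thm_approx} by adapting the geometric (saturation) control method to the fourth-order bilinear setting. The natural starting point is to read \eqref{ctrl_prblm}--\eqref{fourier_mods} in Fourier variables on $\T$ and to exploit two structural features at once. First, the drift $A=\pa_x^4+\pa_x^2$ acts diagonally with symbol $k^4-k^2$, which is nonnegative and vanishes exactly at the controlled frequencies $k\in\{0,\pm1\}$; hence the free parabolic flow $e^{-tA}$ strongly damps every mode with $|k|\ge2$ while leaving the low modes $\{1,\cos x,\sin x\}$ neutral. Second, the multiplicative control $Qu$, with $Q$ supported on exactly these three modes via \eqref{fourier_mods}, couples each Fourier coefficient of $u$ only to its nearest neighbours $k,k\pm1$. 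Thus the bilinear control realizes a controllable nearest-neighbour coupling on the frequency lattice while the drift supplies the diagonal part, which is the configuration in which geometric control arguments operate.

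First I would fix the functional framework. Because $s>\tfrac12$, the space $H^{s}(\T)$ is a Banach algebra, which is precisely what is needed to estimate the product $Qu$ and the nonlinearities $\mc N_{KS}(u)=u\pa_x u$ and $\mc N_{CH}(u)=-\pa_x^2(u^3)$ of \eqref{ks}--\eqref{ch}. With this I would prove local well-posedness together with the uniform a priori bounds and the continuity of the flow with respect to $(u_0,p)$ that are required to replace idealized control motions by genuine trajectories of \eqref{ctrl_prblm}. The key elementary observation here is that the multiplicative term preserves sign: over a short interval with large-amplitude controls, $Qu$ dominates the drift and the leading-order dynamics is the pointwise relation $u(\tau,x)\approx\exp\!\big(\int_0^\tau Q(s,x)\,ds\big)u_0(x)$. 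This realizes, approximately and in arbitrarily small time, every transformation $u_0\mapsto e^{a+b\cos x+c\sin x}u_0$ with $(a,b,c)\in\R^3$, and it keeps the solution inside the cone $\{u>0\}$ (resp. $\{u<0\}$). This sign preservation is the structural reason the hypotheses restrict to states of the same sign, and it substitutes for the comparison/maximum principle, which is unavailable for fourth-order operators.

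The heart of the argument, and the step I expect to be the main obstacle, is the \emph{saturation} computation. Following the geometric control philosophy, the closure of the reachable set of \eqref{ctrl_prblm} contains the reachable set of the extended system obtained by adjoining to the three control directions $u\mapsto\mu_i u$ all their iterated brackets with the drift $A+\mc N$. One must show this extended family saturates, i.e. that the frequency-localized controls, propagated through the nearest-neighbour coupling, exhaust a dense subspace of $L^2(\T)$. A first point to note is that the multiplication operators commute, so no new direction arises from $[\mu_i\,\cdot\,,\mu_j\,\cdot\,]$ and all the richness must come from brackets with the drift: the commutators $[A,\,\mu_i\,\cdot\,]$ reproduce differential operators with trigonometric coefficients, and their further brackets with the controls generate products of these profiles, hence coefficients at frequency $\pm2$, then $\pm3$, and so on. I would therefore compute the first brackets explicitly, verify that the fourth-order symbol $k^4-k^2$ and the contribution of $\mc N$ do not produce resonant cancellations that stall the scheme, and then run an induction on the frequency passing from modes $\le n$ to mode $n+1$, combined with a finite-dimensional (Galerkin) reduction and the continuity estimates of the first step to upgrade the algebraic density to approximate reachability. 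The delicate features are the higher-order drift, the need to keep every bracket-generated motion realizable in small time, and the requirement that the whole concatenation stay in the positive (resp. negative) cone.

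Granting the saturation property, the first assertion follows: small-time $L^2$-approximate controllability between states of the same sign is obtained by concatenating the sign-preserving multiplicative phases with short free parabolic phases that damp the high modes, using the continuity of the flow to pass to true solutions driven by piecewise constant $p$. For the second assertion I would bootstrap from $L^2$ to $H^{s}$ via the smoothing effect of the fourth-order parabolic semigroup: after steering $L^2$-close to a suitable smooth intermediate state, a short additional parabolic evolution regularizes the trajectory into $H^{s}$ and, by a stability argument controlled through the algebra estimates, brings it $H^{s}$-close to $u_1$. The strict positivity $u_0,u_1>0$ (resp. $<0$) is used both to remain in the regime where the sign is preserved and to keep the nonlinear terms under control, and the small-time $L^2$ step is arranged to consume only a short initial subinterval of the fixed final time.
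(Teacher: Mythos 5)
The core of your plan---the ``saturation'' step via iterated Lie brackets of the drift with the multiplication operators, generating higher Fourier frequencies through nearest-neighbour coupling, followed by a Galerkin reduction---is precisely where the proposal breaks down. For bilinear control there is no off-the-shelf extension principle that lets you adjoin brackets $[A+\mc N,\,\mu_i\cdot\,]$ to the reachable directions; that principle is exactly what must be proved, and the paper proves it not by a bracket computation but by a scaling/conjugation limit (\Cref{conjugate_limit}): for $\varphi>0$ one has $e^{\delta^{-1/4}\varphi}\mathcal R_\delta\bigl(e^{-\delta^{-1/4}\varphi}u_0,\,\delta^{-1}p\bigr)\to e^{-(\varphi')^4+\langle p,\mu\rangle}u_0$ in $H^s$ as $\delta\to0^+$. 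The new reachable directions are therefore multiplicative phases of the precise form $\varphi_0-\sum_k(\varphi_k')^4$, not ``differential operators with trigonometric coefficients''; the lower-order part $\pa_x^2$ of the drift and the nonlinearity scale away in this limit, so your worry about resonant cancellations is moot, but equally the frequency-by-frequency induction you describe is the \emph{additive} Agrachev--Sarychev picture and has no purchase in the bilinear setting. The density of the reachable phases (\Cref{density_prpty}) then rests on an algebraic polarization identity---combining $(\varphi_1'\pm\varphi_2')^4-(\varphi_1')^4-(\varphi_2')^4$ gives $(\varphi_1')^2(\varphi_2')^2$, hence $(\varphi')^2\in\mathcal H_{j+1}$, reducing matters to the known second-order saturation---none of which appears in your outline. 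Without the conjugated-limit proposition and this polarization argument, your ``heart of the argument'' is a plan rather than a proof, and the plan points at the wrong computation. (For part (A) the paper then needs no parabolic damping phases at all: one simply approximates $\chi_{\mc Z_\theta^c}\log(u_1/u_0)$ by some $\widetilde\phi_\theta\in H^s$ and steers to $e^{\widetilde\phi_\theta}u_0$; the cut-off near the common zero set is also the reason the conclusion in (A) is only $L^2$ and not $H^s$.)

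Part (B) has a second, independent gap: the statement requires hitting an $\varepsilon$-neighbourhood of $u_1$ at the \emph{exact} prescribed time $T$, and your proposal contains no mechanism for spending the remaining time after the short control phases. A generic $u_1$ is not a steady state, so letting the system run freely after steering close to it makes the trajectory drift away. The paper resolves this by routing through the constant state $1$, which \emph{is} an exact stationary solution of both \eqref{ks} and \eqref{ch} under $p\equiv0$: steer $u_0$ near $1$ in small time (possible because $\phi_1=\log(1/u_0)\in H^s$ by strict positivity), hold at $1$ with zero control for the long middle interval, then steer $1$ near $u_1$ in small time using $\phi_2=\log u_1\in H^s$. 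This also delivers $H^s$-closeness directly, so the smoothing bootstrap you propose is unnecessary; note that positivity is used to make these logarithms genuine $H^s$ functions (exact representations $1=e^{\phi_1}u_0$, $u_1=e^{\phi_2}\cdot1$), not merely to ``stay in a sign-preserving regime''. Moreover, your bootstrap from $L^2$ to $H^s$ would require a nonlinear smoothing stability estimate of the type $\|S_\eta(v)-S_\eta(u_1)\|_{s}\le C\eta^{-s/4}\|v-u_1\|_{L^2}$ for the KS/CH flows, which you neither state nor prove, and which in any case cannot repair the exact-time issue on its own.
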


\Cref{main_thm_approx} highlights several key features in the study of global controllability for fourth-order parabolic equations. First, it establishes global approximate controllability for a nonlinear system using only three scalar controls. More precisely, the profiles $(\mu_1(x), \mu_2(x), \mu_3(x))$ are fixed in the expression of $Q$, and the control acts solely through the time-dependent function $p$. Moreover, this approximate controllability holds for any arbitrarily small $T>0$, without any restriction on the control time $ T$. Finally, the number of controls is independent of the system parameters, as well as of the initial and target states. 

From a control point of view, bilinear control provides an efficient way to influence infinite-dimensional systems using only a finite number of scalar controls. In particular, low-mode multiplicative controls allow one to act on the large-scale dynamics while keeping the control structure simple. However, the coupling between the control and the state makes the analysis more delicate, especially in the case of higher-order parabolic equations.

We briefly review some existing literature on the fourth-order nonlinear parabolic equations of the form \eqref{ctrl_prblm}. Guzm\'an studied the local exact controllability to the trajectories of the Cahn–Hilliard equation using localized internal control in \cite{Guz20}, whereas Cerpa and Mercado investigated similar issues for the Kuramoto–Sivashinsky equation via boundary controls \cite{CM11}. It is worth noting that the above results are obtained in the framework of internal or boundary controls, are essentially local and rely on Carleman estimates for the associated linearized systems together with a local inverse mapping argument.
In this setting, extending controllability results beyond local controllability appears unattainable through Carleman-type techniques alone, which motivates the use of alternative approaches based on geometric and saturation methods to address global controllability issues.
More recently, with this approach, Gao studied the global approximate controllability of the Kuramoto-Sivashinsky equation posed on $\T$ by means of an additive control consisting of finitely many Fourier modes, see \cite{Peng_Gao_2022}. On the contrary, in the bilinear control framework considered here, the control enters the equation multiplicatively, resulting in a different structure of the controlled dynamics.
To the best of our knowledge, the global controllability of the equation \eqref{ctrl_prblm} within a bilinear control framework has not yet been addressed in the literature. 

Let us mention a few key results on the bilinear control problem using geometric control approach. For the semilinear heat equation, small-time global controllability result was obtained recently by means of bilinear control by Duca, Pozzoli, and Urbani in \cite{Duca_Pozzoli_Urbai_JMPA_2025}.
A closely related problem for the Burgers equation has been addressed by Duca and Takahashi in \cite{Duca_Takashi_2025}. For the wave equation, we refer to the work of Pozzoli \cite{Pozzoli_2024}, whereas results for the Schr\"odinger equation can be found in recent papers by Beauchard and Pozzoli \cite{BP25,beauchard2025small} and the references therein. In
these works, the authors employed a saturating geometric control strategy, originally
introduced by Agrachev and Sarychev in \cite{Agrachev_Sarychev_2005,Agrachev_Sarychev_2006}
for low-mode forcing internal control problems for the Navier--Stokes and Euler equations. This method was first adapted to the bilinear
controllability problem for the Schr\"odinger equation by Duca and Nersesyan in \cite{Duca_Nersesyan_JEMS_2025}. Our approach in the present work is
inspired by \cite{Duca_Nersesyan_JEMS_2025}. It is worth mentioning that the Agrachev--Sarychev method has also been successfully employed in the study of global approximate controllability for various nonlinear systems on periodic domains via additive control; for instance, three-dimensional Navier--Stokes system \cite{Shirikyan_06,Shirikyan_07}, compressible and incompressible Euler equations \cite{Nersisyan_2011,Nersisyan_2010}, and the viscous Burgers equation \cite{Shirikyan_14,Shirikyan_18}. For the semilinear heat equation, see \cite{Narsesyan_21}.
Subsequent applications of this approach, inspired by \cite{Narsesyan_21}, can be found in \cite{Jellouli_23,Chen_2023,CDM25,AM25,hernandez2025global} and the references therein.


In contrast to existing contributions on bilinear control settings, the present work extends this methodology to a class of fourth-order nonlinear parabolic dynamics, providing, to the best of our knowledge, the first study of controllability for such systems within a bilinear control framework. Beyond the specific model under consideration, our analysis highlights the robustness of the saturating geometric control strategy with respect to both the order of the underlying system and the nature of the nonlinearities involved. This demonstrates that the approach is not limited to the second-order partial differential equations, but can be successfully applied to higher-order equations with nonlinear structures of a different type. Consequently, the present analysis suggests that similar techniques may be applicable to the study of controllability issues for bilinear dispersive equations, such as the Korteweg--de Vries and the Kawahara equations.

\subsection{Small-time global exact controllability to the constant states}
Our second objective is to establish the \emph{small-time global exact controllability} of \eqref{ctrl_prblm} toward non-zero constant steady states associated with a free trajectory. Let us recall the control system \eqref{ctrl_prblm} with the control of the form
\begin{align}\label{ctrl_form1}
Q(t, x) = \Big\langle p(t), \mu (x) \Big\rangle = \displaystyle \sum_{i=1}^{m} p_i(t) \mu_i(x), \, \, m\leq 5,
\end{align}
where $\mu_i$, $i=1,2,3$, are the same as in \eqref{fourier_mods}, and the functions $\mu_4, \mu_5 \in H^1(\mathbb{T})$ will be chosen later depending on the nonlinearities \eqref{ks} and \eqref{ch}. Furthermore, $p = ( p_1, p_2,p_3,p_4,p_5) \in L^2_{\mathrm{loc}}(\mathbb{R}^{+}; \mathbb{R}^{5})$ consists of control laws as before.
\paragraph{\textit{Cahn-Hilliard equation}:} In this case, we fix $m=5.$ Let $\{\widehat \lambda_k\}_{k\in\mathbb{N}}$ denote the ordered eigenvalues of the Laplacian $-\pa_x^2$ on $\mathbb{T}$, counted without multiplicity. Then
\begin{align*}
\widehat\lambda_k = k^2, \qquad \forall\, k \in \mathbb{N}.
\end{align*}
Observe that, except for the first eigenvalue $\widehat\lambda_0=0$, all the eigenvalues are double. 
We denote by $\{c_0,c_k,s_k\}_{k\in\mathbb{N}^*}$ the corresponding orthonormal eigenfunctions of $-\pa_x^2$, given by
\begin{equation}\label{fourier_basis}
c_0(x)=\frac{1}{\sqrt{2\pi}}, \qquad 
c_k(x)=\frac{1}{\sqrt{\pi}}\cos(kx), \qquad 
s_k(x)=\frac{1}{\sqrt{\pi}}\sin(kx), \quad \forall\, k\in\mathbb{N}^*.
\end{equation}
Consider $\mu_4,\mu_5 \in H^1(\mathbb{T})$ such that there exist positive constants $\theta_i, C_i$, $i=1,2,$ satisfying 
\begin{align}\label{est_mu1}
\begin{cases}
& \hspace{1cm}\ip{\mu_4}{c_0}_{L^2(\mathbb{T})} \neq 0 \text{ and } \ip{\mu_5}{c_0}_{L^2(\mathbb{T})} = 0,\\
&\widehat\lambda_k^{\theta_1}\,
\bigl| \ip{\mu_4}{c_k}_{L^2(\mathbb{T})} \bigr|
\geq C_1, \text{ and }  \ip{\mu_4}{s_k}_{L^2(\mathbb{T})}=0,\qquad \text{ for all } k \in \mathbb{N}^*,\\
&\widehat \lambda_k^{\theta_2}\,
\bigl| \ip{\mu_5}{s_k}_{L^2(\mathbb{T})} \bigr|
\geq C_2, \text{ and }  \ip{\mu_5}{c_k}_{L^2(\mathbb{T})}=0, \qquad \text{ for all } k \in \mathbb{N}^*.
\end{cases}
\end{align} 

\begin{theorem}[Global exact controllability]\label{global_exact}
    Let $(\mu_1,\mu_2,\mu_3,\mu_4,\mu_5)$ be as \eqref{fourier_mods} and \eqref{est_mu1} and  $s>\frac{1}{2}$. Assume that $T>0$, and $u_0\in H^s(\T)$ with $u_0>0.$ Let $\Phi>0$ be a given real number. Then there exists a  control $p\in L^2((0,T);\mathbb{R}^5)$, such that the solution $u$ of \eqref{ch}, \eqref{ctrl_prblm}, and \eqref{ctrl_form1}   satisfies $u(T,\cdot)=\Phi$ in $\T.$

  \noindent
    Analogously, for any $u_0\in H^s(\T)$ with $u_0<0,$ there exists a control $p\in L^2((0,T);\mathbb{R}^5)$, such that $u(T,\cdot)=-\Phi$ in $\T.$
\end{theorem}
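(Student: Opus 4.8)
The plan is to combine the small-time global approximate controllability of \Cref{main_thm_approx} with a local exact controllability result near the constant state $\Phi$. Fix the final time $T>0$. On the interval $[0,T/2]$ I would activate only the controls $p_1,p_2,p_3$ (setting $p_4=p_5=0$) and invoke the $H^s$-approximate controllability established in \Cref{main_thm_approx}: since $u_0>0$ and the target $\Phi>0$ are both positive, for any $\delta>0$ there is a control steering $u_0$ to a state $\tilde u$ with $\|\tilde u-\Phi\|_{H^s(\T)}<\delta$. It then remains to prove a local statement: there exists $\delta>0$ such that every datum $\tilde u$ with $\|\tilde u-\Phi\|_{H^s(\T)}<\delta$ can be driven \emph{exactly} to $\Phi$ on $[T/2,T]$. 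Choosing $\delta$ as dictated by the local result closes the argument, and since $T$ is arbitrary this yields the small-time global exact controllability; the case $u_0<0$ with target $-\Phi$ is symmetric.

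For the local step I would linearize around the uncontrolled trajectory $u\equiv\Phi$. Writing $u=\Phi+v$ and using $\mc{N}_{CH}(u)=-\pa_x^2(u^3)$, the perturbation solves
\[
\pa_t v+\pa_x^4 v+(1-3\Phi^2)\pa_x^2 v=\Phi\, Q+Q v+\pa_x^2\!\left(3\Phi v^2+v^3\right),
\]
with $Q=\sum_{i=1}^5 p_i(t)\mu_i(x)$, so the linearized control system is $\pa_t v+\pa_x^4 v+(1-3\Phi^2)\pa_x^2 v=\Phi Q$. Expanding in $\{c_0,c_k,s_k\}$, the operator $\pa_x^4+(1-3\Phi^2)\pa_x^2$ has eigenvalues $\beta_k=k^2(k^2+3\Phi^2-1)$, which are nonnegative, pairwise distinct (one checks that $\beta_j=\beta_k$ forces $j=k$ when $\Phi>0$), vanish only at $k=0$, and grow like $k^4$. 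I would solve the null controllability of this linear system by the moment method, and the design \eqref{est_mu1} is precisely what makes it work: $\mu_4$ carries only cosine (and constant) components while $\mu_5$ carries only sine components, so the cosine and sine families, which share the eigenvalue $\beta_k$, are \emph{decoupled} and no multiplicity obstruction arises; the lower bounds $\widehat\lambda_k^{\theta_1}|\ip{\mu_4}{c_k}|\ge C_1$ and $\widehat\lambda_k^{\theta_2}|\ip{\mu_5}{s_k}|\ge C_2$ guarantee that each mode is excited with a coefficient decaying at most polynomially; and $\ip{\mu_4}{c_0}\neq0$ controls the non-decaying average mode $\beta_0=0$. Since $\sum_{k\ge1}\beta_k^{-1}<\infty$ and the gaps $\beta_{k+1}-\beta_k\sim 4k^3$ diverge, a biorthogonal family to $\{e^{\beta_k t}\}$ in $L^2(0,T)$ with controlled norms exists, and dividing the resulting moments by $\ip{\mu_4}{c_k}$ and $\ip{\mu_5}{s_k}$ produces admissible $L^2$ controls $p_4,p_5$ sending the linear solution to $0$.

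To pass to the nonlinear equation I would run a fixed-point argument. Treating $F(z):=Q z+\pa_x^2(3\Phi z^2+z^3)$ as a source term, I would use the null controllability of the linear system \emph{with source} to define a map $z\mapsto v$, where $v$ solves the controlled linear problem with right-hand side $\Phi Q+F(z)$ and meets $v(T)=0$, and then show that this map is a contraction (or is compact, via Schauder) on a small ball of a suitable space. The crucial structural facts are that for $s>\tfrac12$ the space $H^s(\T)$ is a Banach algebra, so $z^2,z^3$ remain in the same scale, and that the fourth-order parabolic smoothing of the linear problem gains enough regularity to absorb the two derivatives lost in $\pa_x^2(3\Phi z^2+z^3)$; choosing parabolic-type spaces adapted to $\pa_x^4$ makes $F$ a well-defined, locally Lipschitz map with small image for small data, which yields the fixed point and hence the local exact controllability to $\Phi$.

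The main obstacle lies entirely in the local step and is twofold. First, the moment problem must be solved with estimates uniform enough to be fed into the nonlinear scheme: although the $k^4$ growth of $\beta_k$ gives very favorable gaps, one must track carefully the dependence of the biorthogonal family and of the controls on the polynomial lower bounds in \eqref{est_mu1} and on the possibly small first gap $\beta_1-\beta_0=3\Phi^2$. Second, and more delicate, is closing the fixed point for the Cahn--Hilliard nonlinearity $-\pa_x^2(u^3)$, which loses two spatial derivatives: the argument hinges on balancing this loss against the smoothing of the biharmonic operator together with the algebra property of $H^s$, and on selecting the functional framework so that the source-to-control and control-to-state maps are continuous in compatible norms.
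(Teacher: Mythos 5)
Your proposal is correct and follows essentially the same route as the paper's proof: split $[0,T]$ into an approximate-controllability phase on $[0,T/2]$ using the three controls of \Cref{main_thm_approx}, followed by a local exact controllability phase near $\Phi$ (\Cref{exact}), the latter obtained by linearizing at $\Phi$ to get the operator $-\pa_x^4-(1-3\Phi^2)\pa_x^2$, solving the moment problem with the cosine/sine decoupling forced by \eqref{est_mu1} (gap $3\Phi^2$, biorthogonal family with cost $Ce^{C/T}$), and closing the nonlinearity by a source-term fixed-point argument. The only cosmetic differences are that the paper runs the Banach fixed point on the source term $f$ in weighted $L^2$-based parabolic spaces following \cite{LTT} (using the interpolation $\|v\|_{H^1}\le C\|v\|_{H^2}^{1/2}\|v\|_{L^2}^{1/2}$ rather than the $H^s$-algebra framework you suggest), and verifies the biorthogonal-family hypotheses via the sector/counting-function/gap conditions of \cite[Theorem IV.1.10]{Boyer23} rather than your summability-plus-gap criteria.
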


\Cref{global_exact} establishes global exact controllability of \eqref{ctrl_prblm}, \eqref{ch}, and \eqref{ctrl_form1} to the nonzero stationary states associated with $p=0$ in arbitrary time horizon. This result is obtained by combining the small-time global approximate controllability provided by \Cref{main_thm_approx} with a local exact controllability result to the stationary states, valid for any positive time. Consequently, the control strategy involves five potentials: three control directions are required to achieve the approximate controllability stated in \Cref{main_thm_approx}, together with two additional controls to ensure local exact controllability.

The controllability of the associated linearized model is established using the method of moments \cite{FR1,FR2}. These seminal works have been significantly extended over the years, leading to numerous important results for a wide range of parabolic problems and control strategies; see, for instance, \cite{FCGBdeT10,AB2014,AKBGBdT16}. For a presentation of the moment method in the case of fourth-order parabolic equations, we refer to \cite{Cer10,HsM25}. Based on an explicit control cost estimate for the corresponding linearized control problem, we further prove local exact controllability of the main nonlinear model by means of the source term method \cite{LTT}.

An interesting feature of \Cref{global_exact} is the validity of an exact controllability result on the torus, where the Laplacian exhibits double eigenvalues. This essentially provides an analogous phenomenon for the bi-Laplacian. The proof of exact controllability relies on the solvability of a suitable joint moment problem, which is more delicate in this setting. This difficulty is overcome by filtering the spectrum of the bi-Laplacian through two additional control potentials, $\mu_4$ and $\mu_5$, chosen so that $\mu_4$ acts only on the cosine modes while $\mu_5$ acts only on the sine modes, see hypothesis \eqref{est_mu1}. As a result, the moment problem can be decomposed into two independent subproblems, each associated with simple eigenvalues.

\smallskip
A similar framework can also be employed for addressing the global exact controllability problem for the Kuramoto–Sivashinsky equation \eqref{ctrl_prblm} and \eqref{ks}. In this case, one may employ five control profiles as before. Nevertheless, by exploiting the specific structure of the nonlinearity in \eqref{ks}, it is possible to achieve global exact controllability with only four control laws, 
 upon a slight modification of the assumptions in \eqref{est_mu1}.

\smallskip
\paragraph{\textit{Kuramoto-Sivashinsky equation}:} Let us fix $m=4$ in \eqref{ctrl_form1} and consider $\mu_4 \in H^1(\mathbb{T})$ such that
\begin{equation}\label{est_q1}
(\widehat \lambda_k^{\theta}+1)\,
\bigl| \ip{\mu_4}{e^{ikx}}_{L^2(\mathbb{T})} \bigr|
\geq C, \qquad \text{ for all } k \in \mathbb{Z}, \text{ and for some } C, \theta>0. \qquad\end{equation}   \begin{theorem}[Global exact controllability]\label{global_exact1}
    Let $(\mu_1,\mu_2,\mu_3,\mu_4)$ be as \eqref{fourier_mods} and \eqref{est_q1} and $s>\frac{1}{2}$. Assume $T>0$, and $u_0\in H^s(\T)$ with $u_0>0$. Let $\Phi>0$ be a given real number. Then there exists a  control $p\in L^2((0,T);\mathbb{R}^4)$, such that the solution $u$ of \eqref{ks}, \eqref{ctrl_prblm}, and \eqref{ctrl_form1}   satisfies $u(T,\cdot)=\Phi$ in $\T.$

  \noindent
    Analogously, for any $u_0\in H^s(\T)$ with $u_0<0,$ there exists a control $p\in L^2((0,T);\mathbb{R}^4)$, such that $u(T,\cdot)=-\Phi$ in $\T.$
\end{theorem}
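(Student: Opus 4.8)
The plan is to prove \Cref{global_exact1} by combining the small-time global approximate controllability from \Cref{main_thm_approx} with a local exact controllability result to the constant state $\Phi$, valid in arbitrary positive time. First, since $u_0>0$ and $\Phi>0$ share the same sign, \Cref{main_thm_approx}\eqref{point_12} applies: using only the three controls $(\mu_1,\mu_2,\mu_3)$, I can steer $u_0$ in small time $\tau_1<T$ into an $H^s$-neighborhood of $\Phi$, arriving at some state $\tilde u_0$ with $\|\tilde u_0-\Phi\|_{H^s}<\delta$ for $\delta$ to be fixed by the local result. The remaining task is then purely local: drive $\tilde u_0$ exactly to $\Phi$ in the remaining time $T-\tau_1$.

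For the local exact controllability, I would first observe that the constant $\Phi$ is a stationary state for the free dynamics, since $\mathcal N_{KS}(\Phi)=\Phi\,\partial_x\Phi=0$ and $\partial_x^4\Phi+\partial_x^2\Phi=0$, so $u\equiv\Phi$ solves \eqref{ctrl_prblm} with $p=0$. Writing $v=u-\Phi$, the problem becomes exact controllability of $v$ to zero. Linearizing around $\Phi$ with a control of the form \eqref{ctrl_form1} (using $m=4$), the linearized operator is $\partial_t v+\partial_x^4 v+\partial_x^2 v+\Phi\,\partial_x v=\big(\sum_{i=1}^{4}p_i\mu_i\big)\Phi$, where the bilinear term contributes $\Phi\sum p_i\mu_i$ as an effective additive forcing in the prescribed directions. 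The key step is to establish null controllability of this linearized system via the moment method: expanding in the Fourier basis $\{e^{ikx}\}_{k\in\mathbb Z}$, the eigenvalues of $-\partial_x^4-\partial_x^2-\Phi\partial_x$ are $-k^4+k^2-ik\Phi$, which are \emph{simple} on $\T$ (the imaginary part $-k\Phi$ distinguishes $k$ from $-k$). This is precisely why only one extra profile $\mu_4$ is needed here, in contrast to the Cahn–Hilliard case: the drift term $\Phi\partial_x$ splits the double eigenvalues. I would then solve the associated moment problem by constructing a biorthogonal family to $\{e^{\lambda_k t}\}$, whose existence and control-cost bounds follow from the gap condition on the real parts $\mathrm{Re}\,\lambda_k=-k^4+k^2$ (which grows like $k^4$, giving an even stronger gap than in the heat case) together with the lower bound \eqref{est_q1} guaranteeing that $\mu_4$ excites every mode. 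This yields an explicit control cost estimate.

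Next, to pass from the linear to the nonlinear local result, I would use the source term method of \cite{LTT}: one absorbs the nonlinear remainder $\mathcal N_{KS}(v+\Phi)-\mathcal N_{KS}(\Phi)=v\partial_x v+\Phi\partial_x v-\Phi\partial_x v$ plus the control-state coupling into a source term living in a weighted space that decays faster than the control cost grows, and then close the argument by a fixed-point (Banach or Schauder) scheme in a suitable ball of $C([\,\tau_1,T\,];H^s)$. The smallness $\delta$ of the initial deviation $\tilde u_0-\Phi$ is chosen to fit the radius of the fixed-point ball dictated by the control cost and the nonlinear estimates. Concatenating the approximate-control trajectory on $[0,\tau_1]$ with the local exact trajectory on $[\tau_1,T]$ produces a control $p\in L^2((0,T);\mathbb R^4)$ steering $u_0$ exactly to $\Phi$ at time $T$. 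The case $u_0<0$, $-\Phi$ is symmetric, using the sign-preservation in \Cref{main_thm_approx} and the analogous linearization around $-\Phi$.

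I expect the main obstacle to be the moment-problem analysis for the non-self-adjoint linearized operator: the drift $\Phi\partial_x$ makes the eigenvalues complex, so I must verify a uniform lower spectral gap in the complex plane and construct the biorthogonal family with the right cost bounds, all while ensuring assumption \eqref{est_q1} delivers a uniform lower bound on $|\langle\mu_4,e^{ikx}\rangle|$ across all modes $k\in\mathbb Z$. A secondary difficulty is reconciling the $H^s$ regularity produced by the approximate-control step with the functional framework required to run the source-term fixed point for the quadratic Kuramoto–Sivashinsky nonlinearity; here the condition $s>\tfrac12$ is essential, since it makes $H^s(\T)$ an algebra and controls $v\partial_x v$.
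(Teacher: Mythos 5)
Your proposal is correct and follows essentially the same route as the paper: small-time $H^s$-approximate controllability to a neighbourhood of $\Phi$ via \Cref{main_thm_approx}--\Cref{point_12}, followed by local exact controllability obtained by linearizing around $\Phi$, solving the moment problem for the simple complex spectrum $-k^4+k^2\pm ik\Phi$ (the drift $\Phi\partial_x$ splitting the double eigenvalues is precisely the paper's reason why a single extra profile $\mu_4$ suffices for Kuramoto--Sivashinsky), and closing with the source-term/fixed-point method of \cite{LTT}, exactly as in \Cref{exact_control_lin} and \Cref{sec_source_term}. The only imprecision is your claim that the biorthogonal bounds follow from a gap on the real parts: $\mathrm{Re}\,\lambda_k=-k^4+k^2$ coincides for $\pm k$, so the gap (taken as $\rho=\Phi$ in the paper) must be measured in the complex plane via the imaginary parts, a point you do correctly flag later as the main obstacle.
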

The controllability of parabolic-type equations driven by bilinear (multiplicative) controls is known to be an interesting problem, even for linear systems. A key difficulty is due to a structural obstruction identified in \cite{Ball_marsder_1982}, where it was shown that the reachable set of a linear equation with multiplicative control, starting from any initial data in $L^{2}(\T)$, is contained in a countable union of compact subsets of $L^{2}(\T)$. Consequently, its complement is dense in $L^{2}(\T)$, which rules out the possibility of achieving classical exact controllability in the $L^{2}$ framework.

Due to the lack of exact controllability, several {approximate controllability} results for parabolic equations (second-order) with multiplicative controls have been established in the literature. In \cite{Khapalov_2002}, approximate controllability was obtained for one-dimensional semilinear parabolic equations over sufficiently large time horizons $T>0$, between nonnegative states, with control functions depending on both space and time. Related results were derived in \cite{Cannarsa_Floridia_2011} for linear degenerate parabolic equations subject to Robin boundary conditions. Approximate controllability for nonlinear degenerate parabolic equations with bilinear controls in large time was further investigated in \cite{Florida_2014}. Multiplicative controllability properties for semilinear reaction--diffusion equations allowing finitely many sign changes were studied in \cite{Cannarsa_Floridia_Khapalov_2017}. More precisely, the results of \cite{Cannarsa_Floridia_Khapalov_2017} show that any target state exhibiting the same number of sign changes, in the same order, as the prescribed initial datum can be approximately reached in the $L^{2}$-norm at a sufficiently large time $T>0$. Similar controllability results for nonlinear degenerate parabolic equations between sign-changing states were later obtained in \cite{FloridaINitsch_Trombetti_2020}.

The structural obstruction of \cite{Ball_marsder_1982} does not exclude \emph{exact controllability to the trajectories}.
 This concept was first investigated in \cite{Alabau_Cannarsa_Urbani_21, Alabau_cannarsa_Urbani_22} in an abstract framework for parabolic PDEs with scalar bilinear controls, where local and semi-global controllability results were obtained. The approach was subsequently extended in \cite{Cannarsa_Duca_Urbani_22, Buffe_Duca_25} to address exact controllability to eigensolutions. Motivated by the recent developments, the present study aims to address the problem of global exact controllability for fourth-order parabolic equations driven by bilinear controls, within a geometric control framework.

\subsection{Structure of the paper}
The rest of the paper is organized as follows. In \Cref{sec_pre}, we present preliminary results, including well-posedness, the saturation limit property, and the required density of the saturating subspace. Based on these results, \Cref{sec_smalltime} is devoted to the proof of the main approximate controllability result \Cref{main_thm_approx}. The key ingredient of the main theorem, namely the saturation limit property \Cref{conjugate_limit}, together with the stability estimates \Cref{existence}--\Cref{point_1}, is established in \Cref{section_propn_proofs}. Finally, \Cref{sec_con} addresses global exact controllability to constant states, where the proofs of \Cref{global_exact} and \Cref{global_exact1} are provided.

\section{Preliminaries}\label{sec_pre}
The aim of this section is to present some preliminary results. We first introduce the functional spaces employed in our analysis. Next, we state the existence and uniqueness of solutions to equation \eqref{ctrl_prblm}. Finally, we state a saturation limit result for the conjugated dynamics, which plays a key role in the proof of the main theorem.

\subsection{Function spaces and notations}\label{sec_function}
Let $u\in L^{2}(\mathbb{T})$ admit the Fourier expansion
\begin{align}\label{fourier_expansion}
u(x)=\sum_{k\in\mathbb{Z}} \widehat{u}_k e^{ikx}, \qquad \; \widehat{u}_{-k}=\overline{\widehat{u}_k},
\end{align}
with the convergence of the expansion in $L^2(\mathbb{T})$ provided that $\sum_{k\in\mathbb{Z}} | \widehat{u}_k|^2 < \infty.$
For $s\ge 0$, the Sobolev space $H^s := H^s(\mathbb{T})$ consists of all functions given by \eqref{fourier_expansion} such that $\sum_{k\in\mathbb{Z}} (1+|k|^{2})^{s}|\widehat{u}_k|^{2}<\infty.$ The associated norm $\|\cdot\|_s$ is defined by
\[
\|u\|_{s}^{2}
:= \sum_{k\in\mathbb{Z}} (1+|k|^{2})^{s}|\widehat{u}_k|^{2}.
\]
 When $s=0$ it coincides with the $L^{2}$–norm; that is, $\| \cdot \|_0 : = \| \cdot\|$.  For $s \ge 0$ and $u=\displaystyle\sum_{k\in\mathbb{Z}} \widehat{u}_k e^{ikx}\in H^{s}(\mathbb{T})$, we set
\[
\partial_x^{s} u := \sum_{k\in\mathbb{Z}} |k|^{s} \widehat{u}_k e^{ikx}.
\]
For $s>0$, the $H^s(\mathbb{T})$-norm defined above satisfies the equivalence
\begin{equation*}
\|u\|_{s} 
\simeq 
\|u\| 
+ \|\pa_x^s u\|.
\end{equation*}
\begin{itemize}
\item Throughout this article, the symbol $C $ will represent a generic positive constant. Its value may change from one occurrence to the next. Whenever the dependence of such a constant on specific parameters is relevant, it will be indicated explicitly.

\item We denote by $\mathcal{R}^{KS}_t(u_0,p)$ and $\mathcal{R}^{CH}_t(u_0,p)$ the solution of \eqref{ctrl_prblm} at time $t$, associated with the initial datum $u_0$ and the control $p$, corresponding to the nonlinearities $\mathcal{N}_{KS}$ and $\mathcal{N}_{CH}$, respectively. We implicitly restrict ourselves to situations in which such solutions are well defined. For convenience, throughout the paper, whenever the notation $\mathcal{R}_t$ appears without the superscript $KS$ or $CH$, the corresponding statements are understood to hold for both cases.
\end{itemize}

\subsection{Local well-posedness and semi-global stability}
Let us state some important well-posedness results for the system under consideration.
\begin{proposition}\label{existence}
Assume that $s > \tfrac12$, $u_0 \in H^s(\mathbb{T})$, and $p \in L^2_{\mathrm{loc}}(\mathbb{R}^+;\mathbb{R}^3)$. 
Then there exists a time $T_* = T_*(u_0,p) > 0$ such that the system \eqref{ctrl_prblm}--\eqref{fourier_mods} admits a unique solution 
\[
u \in C\big([0,T_*]; H^s(\mathbb{T})\big).
\]
\begin{enumerate}
    \item\label{point_1} In addition, the following semi-global stability property holds. Let $R>0$ and let 
$p \in L^2_{\mathrm{loc}}(\mathbb{R}^+;\mathbb{R}^3)$. For any $u_0, v_0 \in H^s(\mathbb{T})$ with 
$\|u_0\|_{s}, \, \|v_0\|_{s} \le R$, there exist a time $T^* = T^*(R,p) > 0$ and a constant $C\left(R, \norm{p}_{L^2}\right) > 0$ such that
\begin{equation}\label{stability}
\| \mc R_t(u_0,p) - \mc R_t(v_0,p) \|_{s} 
\le C \| u_0 - v_0 \|_{s}, 
\qquad \forall\, t \in [0,T^*],
\end{equation}
where $\mc R_t(u_0,p)$ and $\mc R_t(v_0,p)$ denote the solutions of \eqref{ctrl_prblm}--\eqref{fourier_mods}  corresponding to the initial data 
$u_0$ and $v_0$, respectively.
\item\label{point_2} Set $\Lambda := \|u\|_{C([0,T_*];H^s(\T))}
     + \|u_0\|_{H^s(\T)}
     + \|p\|_{L^2((0,T_*);\mathbb{R}^3)}.$
There exists a constant $
\delta = \delta\bigl(T_*(u_0,p),\, \Lambda\bigr) > 0$ such that, for any $\widehat{u}_0 \in H^s(\mathbb{T})$ and
$\widehat{p} \in L^2\bigl((0,T);\mathbb{R}^{3}\bigr)$ satisfying $
\|\widehat{u}_0-u_0\|_{s}
+ \|\widehat{p}-p\|_{L^2((0,T);\mathbb{R}^{3})} < \delta,$ equation \eqref{ctrl_prblm}--\eqref{fourier_mods}  admits a unique mild solution 
$
\widehat{u} \in C\big([0,T_*];H^s(\mathbb{T})\big)
$
with initial condition $\widehat{u}_0$ and control $\widehat{p}$.
\end{enumerate}
\end{proposition}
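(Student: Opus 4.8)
The plan is to recast the problem in its mild (Duhamel) form and to obtain both the existence–uniqueness statement and the two stability items from a single contraction argument driven by the smoothing of the fourth–order semigroup. Write $A := \partial_x^4 + \partial_x^2$, so that on the Fourier side $A e^{ikx} = \lambda_k e^{ikx}$ with $\lambda_k = k^4 - k^2 = k^2(k^2-1) \ge 0$ for every $k\in\mathbb{Z}$. Hence $-A$ generates an analytic semigroup $\{e^{-tA}\}_{t\ge0}$ which is a contraction on each $H^s(\T)$, and the quartic growth of $\lambda_k$ yields the parabolic smoothing estimate
\[
\|e^{-tA} f\|_{s+\sigma} \le C\, t^{-\sigma/4}\, \|f\|_{s}, \qquad \sigma \ge 0, \ 0 < t \le T,
\]
where the finitely many modes $k\in\{0,\pm1\}$ on which $\lambda_k=0$ are absorbed by the trivial bound. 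I would record this estimate first and note that the anti-diffusion $\partial_x^2$, being of lower order than $\partial_x^4$, affects neither the analyticity nor the gain $t^{-\sigma/4}$.

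For existence and uniqueness I would set up, on a closed ball of $C([0,T];H^s(\T))$, the map
\[
\Gamma(u)(t) := e^{-tA}u_0 - \int_0^t e^{-(t-\tau)A}\big[\mc{N}(u(\tau)) - Q(\tau)u(\tau)\big]\,d\tau,
\]
and show it is a contraction for $T=T_*(u_0,p)$ small. The two nonlinearities are handled through their conservative form: $\mc{N}_{KS}(u) = \tfrac12\partial_x(u^2)$ loses one derivative while $\mc{N}_{CH}(u) = -\partial_x^2(u^3)$ loses two, and since $H^s(\T)$ is a Banach algebra for $s>\tfrac12$ one has $\|u^2\|_s \lesssim \|u\|_s^2$ and $\|u^3\|_s \lesssim \|u\|_s^3$. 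Combined with the smoothing estimate this gives
\[
\big\|e^{-(t-\tau)A}\mc{N}_{KS}(u)\big\|_s \lesssim (t-\tau)^{-1/4}\|u\|_s^2, \qquad \big\|e^{-(t-\tau)A}\mc{N}_{CH}(u)\big\|_s \lesssim (t-\tau)^{-1/2}\|u\|_s^3,
\]
with integrable kernels since $\tfrac14,\tfrac12<1$. For the control term, multiplication by the smooth profiles $\mu_i$ preserves $H^s$, so $\|Q(\tau)u\|_s \lesssim |p(\tau)|\,\|u\|_s$ and $\int_0^t|p(\tau)|\,d\tau \le \sqrt{t}\,\|p\|_{L^2(0,t)}$ by Cauchy–Schwarz. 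Collecting these bounds, small powers $T^{3/4}$, $T^{1/2}$ and $\sqrt{T}$ appear in front of the nonlinear and control contributions, so $\Gamma$ is a self-map and a contraction for $T_*$ small, giving the unique $u\in C([0,T_*];H^s(\T))$.

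For the semi-global stability \eqref{point_1}, since the existence time depends on the data only through $\|u_0\|_s\le R$ and $\|p\|_{L^2}$, I would fix $T^*=T^*(R,p)$ so that $\mc R_t(u_0,p)$ and $\mc R_t(v_0,p)$ both exist on $[0,T^*]$ with an a priori bound $M(R)$ on their $H^s$-norms. Subtracting the two mild formulations and using the local Lipschitz identities $\mc{N}_{KS}(u)-\mc{N}_{KS}(v)=\tfrac12\partial_x\big((u+v)(u-v)\big)$ and $u^3-v^3=(u^2+uv+v^2)(u-v)$ together with the algebra property, the difference $w:=\mc R_t(u_0,p)-\mc R_t(v_0,p)$ satisfies an integral inequality of the form
\[
\|w(t)\|_s \le \|u_0-v_0\|_s + C\int_0^t\Big[(t-\tau)^{-\alpha}M(R)^{\beta} + |p(\tau)|\Big]\|w(\tau)\|_s\,d\tau,
\]
with $\alpha=\tfrac14,\ \beta=1$ (KS) and $\alpha=\tfrac12,\ \beta=2$ (CH). A generalized (weakly singular) Gronwall inequality then yields $\|w(t)\|_s \le C(R,\|p\|_{L^2})\|u_0-v_0\|_s$ on all of $[0,T^*]$, which is \eqref{stability}. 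Item \eqref{point_2} is a continuous-dependence statement, proved by rerunning the contraction argument on the full interval $[0,T_*]$ with the a priori quantity $\Lambda$ fixing the radius of the ball; for $\delta$ small the perturbed data $(\widehat u_0,\widehat p)$ keep $\Gamma$ a contraction on a slight enlargement of that ball, so $\widehat u$ exists on all of $[0,T_*]$, covering the interval by finitely many contraction-length subintervals determined by $\Lambda$ and propagating the bound step by step.

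The main obstacle I anticipate is the derivative loss in the nonlinearities — two derivatives for Cahn–Hilliard — which must be absorbed entirely by the fourth-order smoothing; keeping the resulting kernel singularity integrable (here $t^{-1/2}$) and verifying that the anti-diffusion $\partial_x^2$ does not degrade the gain $t^{-\sigma/4}$ is the delicate point, together with the use of a weakly singular Gronwall inequality to close the stability estimate uniformly over the full existence interval rather than over a single contraction step.
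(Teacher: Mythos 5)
Your proposal is correct. For the existence--uniqueness statement it follows essentially the same path as the paper, which likewise invokes a contraction argument on the Duhamel formulation (citing Proposition~2.1 of \cite{Duca_Pozzoli_Urbai_JMPA_2025} and omitting details), and the paper gives no proof of part (2) at all, so your finite-covering continuation argument there is a legitimate filling-in of a gap the authors left implicit. The genuine divergence is in the proof of the stability estimate \eqref{stability}, the one part the paper proves in detail: the paper argues by energy estimates on the strong form of the difference equation --- multiplying by $\vartheta$ and by $\partial_x^{2s}\vartheta$, integrating by parts, and using Young's inequality plus the $H^s$ algebra property to reach $\tfrac{d}{dt}\norm{\vartheta}_s^2 \le C\norm{\vartheta}_s^2\bigl(1+\norm{u}_s^4+\norm{v}_s^4\bigr)$, then integrating over a small interval $[0,\widehat T]$ and absorbing the term carrying $\sqrt{\widehat T}$ --- whereas you remain entirely in the mild formulation and close via a weakly singular Gronwall inequality. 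Both are sound; in fact your singular-Gronwall step can be replaced by plain absorption, since the kernel $(t-\tau)^{-\alpha}M(R)^{\beta}+|p(\tau)|$ has total integral over $[0,T^*]$ tending to $0$ as $T^*\to 0$, which makes the dependence $T^*(R,p)$ and $C\bigl(R,\norm{p}_{L^2}\bigr)$ completely transparent and mirrors the paper's own absorption step. What your route buys is uniformity: existence, stability, and continuous dependence all flow from a single contraction framework at the natural $H^s$ regularity, with the integrable singular kernels $(t-\tau)^{-1/4}$ (Kuramoto--Sivashinsky) and $(t-\tau)^{-1/2}$ (Cahn--Hilliard) absorbing the one- and two-derivative losses. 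What the paper's route buys is the avoidance of singular kernels altogether, at the price of justifying the formal pairing with $\partial_x^{2s}\vartheta$ (by density and parabolic smoothing) and of carrying out the $L^2$ and homogeneous $H^s$ estimates separately for each nonlinearity; both arguments ultimately rest on the same structural inputs, namely the conservative form of the nonlinearities, the algebra property of $H^s$ for $s>\tfrac12$, and the fact that the local existence time depends on $u_0$ only through $\norm{u_0}_s$.
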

\begin{proof}
Local well-posedness for \eqref{ctrl_prblm} can be established using a fixed-point argument, following a similar strategy as in 
Proposition~2.1 of \cite{Duca_Pozzoli_Urbai_JMPA_2025}. Since this argument is quite standard, we skip the details of the proof of this part. For the sake of completeness, the proof of \Cref{point_1} is provided in \Cref{section_propn_proofs}. 
\end{proof}
Let us present a global well-posedness for \eqref{ctrl_prblm}, \eqref{ctrl_form1} in $L^2(\T)$.
\begin{proposition}
    Let $T>0$, $u_0\in L^2(\T)$, $\mu_i\in H^1(\T)$, $i=1,2,3,4,5$ and  $p \in L^2((0,T);\mathbb{R}^5)$. Then equation \eqref{ctrl_prblm}  and \eqref{ctrl_form1} admits a unique mild solution $u\in C([0,T];L^2(\T))\cap L^2((0,T);H^2(\T)).$
\end{proposition}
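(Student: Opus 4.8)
The plan is to construct the solution directly in the parabolic energy space $C([0,T];L^2(\T))\cap L^2((0,T);H^2(\T))$, by combining a uniform a priori $L^2$–estimate with a Galerkin approximation for existence, and then settling uniqueness through an energy estimate for the difference of two solutions. First I would set $L:=\pa_x^4+\pa_x^2$, whose Fourier symbol $k^4-k^2$ is nonnegative at every integer $k$ (it vanishes only at $k=0,\pm1$ and is $\geq 12$ for $|k|\geq 2$), so that $-L$ generates a bounded analytic semigroup on $L^2(\T)$ and the weak (equivalently mild) formulation of \eqref{ctrl_prblm}, \eqref{ctrl_form1} is well posed. I would then introduce the Galerkin truncations $u_N$ onto the modes $|k|\leq N$, for which the projected system is a polynomial ODE, hence locally solvable by Cauchy--Lipschitz.

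The heart of the argument is a \emph{global} a priori bound. Testing the projected equation against $u_N$, the biharmonic term gives $\|\pa_x^2 u_N\|^2$ while the anti-diffusion gives $-\|\pa_x u_N\|^2$. The nonlinearity turns out to be harmless in $L^2$: for \eqref{ks} one has $\ip{u\pa_x u}{u}=\tfrac13\int_\T\pa_x(u^3)\,dx=0$, whereas for \eqref{ch} one has $\ip{-\pa_x^2(u^3)}{u}=3\int_\T u^2(\pa_x u)^2\,dx\geq0$, a favorable sign that may simply be dropped from the left-hand side. I would absorb the anti-diffusion via $\|\pa_x u_N\|^2\leq\tfrac12\|\pa_x^2 u_N\|^2+C\|u_N\|^2$ and control the bilinear term using $H^1(\T)\hookrightarrow L^\infty(\T)$ and the form of $Q$, so that $|\ip{Q u_N}{u_N}|\leq C\,|p(t)|\,\|u_N\|^2$. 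This yields
\[
\frac{d}{dt}\|u_N\|^2+\|\pa_x^2 u_N\|^2\leq C\bigl(1+|p(t)|\bigr)\|u_N\|^2,
\]
and since $p\in L^2$ gives $\int_0^T|p|\leq\sqrt{T}\,\|p\|_{L^2}<\infty$, Grönwall's inequality furnishes a bound on $u_N$ in $L^\infty((0,T);L^2)\cap L^2((0,T);H^2)$ uniform in $N$, simultaneously excluding finite-time blow-up and providing the compactness estimates.

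Next I would bound $\pa_t u_N$ in $L^{r}((0,T);H^{-2}(\T))$ for some $r>1$ — exploiting that $H^1(\T)$ is a Banach algebra for \eqref{ks} (so $\pa_x(u_N^2)\in L^2_t H^{-1}$), and the Bochner interpolation $L^\infty_t L^2\cap L^2_t H^2\hookrightarrow L^q_t L^\infty$, $q<8$, to handle $u_N^3$ for \eqref{ch} — and then invoke the Aubin--Lions--Simon lemma to extract a subsequence converging strongly enough to pass to the limit in all terms, including the nonlinear one. The limit $u$ is the desired weak solution in the energy space, and $u\in C([0,T];L^2(\T))$ follows from $u\in L^2_tH^2$ together with $\pa_t u\in L^r_t H^{-2}$ by the Lions--Magenes embedding.

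The step I expect to be the main obstacle is \emph{uniqueness} in this low-regularity class, particularly for \eqref{ch}, where the two derivatives in $\pa_x^2(u^3)$ resist a naive estimate. For $w=u-v$, testing the difference equation with $w$ and integrating by parts once produces the nonnegative term $3\int_\T u^2(\pa_x w)^2\,dx$ together with a cross term $3\int_\T (u+v)(\pa_x v)\,w\,\pa_x w\,dx$ (respectively $\int_\T(\pa_x v-\tfrac12\pa_x u)\,w^2\,dx$ for \eqref{ks}). Using the interpolation $\|\pa_x w\|^2\leq\|w\|\,\|\pa_x^2 w\|$ and Young's inequality to absorb $\|\pa_x^2 w\|^2$, the residual Grönwall coefficient involves quantities such as $\|u\|_{L^\infty}\|\pa_x v\|_{L^\infty}$; the delicate point is that these lie in $L^{\rho}((0,T))$ for some $\rho\geq1$, which I would verify from the interpolation $u\in L^q_t H^{4/q}$ combined with the embeddings $H^s\hookrightarrow L^\infty$ ($s>\tfrac12$) and $H^s\hookrightarrow W^{1,\infty}$ ($s>\tfrac32$). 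Once this time-integrability is secured, Grönwall forces $w\equiv0$, giving uniqueness and completing the argument.
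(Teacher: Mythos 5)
Your proposal is correct, but there is essentially nothing in the paper to compare it against: the paper states this proposition \emph{without proof}, treating global $L^2$ well-posedness as standard (the only well-posedness arguments actually written out are the local $H^s$ theory via a fixed-point scheme borrowed from Duca--Pozzoli--Urbani, and the $H^s$ stability estimate \eqref{stability}). Your Galerkin-plus-energy construction is therefore a genuinely independent, self-contained route, and its decisive structural facts all check out: the cancellation $\int_\T u_N\,\mathcal{N}_{KS}(u_N)\,dx=0$ and the favorable sign $\int_\T u_N\,\mathcal{N}_{CH}(u_N)\,dx=3\int_\T u_N^2(\pa_x u_N)^2\,dx\ge 0$ survive the spectral projection because you test with $u_N$ itself and $P_N$ is self-adjoint; the anti-diffusion is absorbed by interpolation; and $\left|\ip{Qu_N}{u_N}\right|\le C|p(t)|\norm{u_N}^2$ with $|p|\in L^1(0,T)$ makes Gr\"onwall global, which is precisely where the hypotheses $\mu_i\in H^1(\T)\hookrightarrow L^\infty(\T)$ and $p\in L^2$ enter. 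The uniqueness scheme also closes in the energy class: the coefficient $\norm{u}_{L^\infty}\norm{\pa_x v}_{L^\infty}$ lies in $L^\rho(0,T)$ for every $\rho<2$ by the interpolation $u\in L^q\bigl((0,T);H^{4/q}(\T)\bigr)$, which is ample for Gr\"onwall; for \eqref{ch} an even simpler route is to integrate by parts twice, so that only $\norm{u}_{L^\infty}^2+\norm{v}_{L^\infty}^2$ appears as a coefficient, and this is integrable in time.

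Two points deserve tightening. First, state the time-derivative bound with $r=2$ rather than ``some $r>1$'': the continuity result you invoke requires the dual pairing $u\in L^2\bigl((0,T);H^2(\T)\bigr)$, $\pa_t u\in L^2\bigl((0,T);H^{-2}(\T)\bigr)$, and this is in fact available here, since $u_N^2\in L^2\bigl((0,T);L^2(\T)\bigr)$ for \eqref{ks} and $u_N^3\in L^4\bigl((0,T);L^2(\T)\bigr)$ for \eqref{ch}; with only $r>1$ strong $L^2$-continuity in time does not follow directly. Second, the proposition asks for a \emph{mild} solution, so you should add the (standard, one-line) identification of the weak solution you construct with the Duhamel formula; this is legitimate because the forcing $-\mathcal{N}(u)+Qu$ lies in $L^2\bigl((0,T);H^{-2}(\T)\bigr)$ and the analytic semigroup generated by $-\pa_x^4-\pa_x^2$ maps $H^{-2}(\T)$ into $L^2(\T)$ with an integrable singularity of order $t^{-1/2}$. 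With these two repairs, your argument proves exactly the stated result.
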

\subsection{Small-time limit of conjugated dynamics}
We begin by stating the following small-time limit result for the conjugated dynamics, which is a key ingredient in the proof of our main result \Cref{main_thm_approx}.
\begin{proposition}\label{conjugate_limit}
Let $s > \frac{1}{2}$  and $ u_0 \in H^s(\mathbb{T}).$ Assume that $ p = (p_1, p_2, p_3) \in \mathbb{R}^3$, $\varphi \in H^{2s + 4}(\mathbb{T})$, and  $\varphi > 0$. Then, there exists a constant $\delta_0 > 0$, such that for any $\delta \in (0, \delta_0),$ the solution $\mathcal{R}( e^{-\delta^{-\frac{1}{4}} \varphi} u_0, \delta^{-1} p)$ of \eqref{ctrl_prblm}--\eqref{fourier_mods} is well-defined in $[0, \delta].$ Furthermore the following limit holds:
$$e^{\delta^{- \frac{1}{4}} \varphi} \mathcal{R}_{\delta}( e^{-\delta^{-\frac{1}{4}} \varphi} u_0, \delta^{-1} p) \to e^{ - \left(\varphi'\right)^4 + \langle p, \mu \rangle} u_0 \ \text{ in } H^s, \ \text{ as } \delta \to 0^+.$$
\end{proposition}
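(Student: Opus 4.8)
The plan is to recast the statement as a semiclassical perturbation problem by conjugating the dynamics with the weight $e^{\phi}$, $\phi:=\delta^{-1/4}\varphi$, and then rescaling time. Writing $u:=\mc R(e^{-\phi}u_0,\delta^{-1}p)$ and $w:=e^{\phi}u$, the conjugation identity $e^{\phi}\pa_x^{\,j}(e^{-\phi}\,\cdot\,)=(\pa_x-\phi')^{j}$, together with the fact that the multiplicative control commutes with the weight, turns \eqref{ctrl_prblm} into
\[
\pa_t w+(\pa_x-\phi')^4 w+(\pa_x-\phi')^2 w+e^{\phi}\mc N(e^{-\phi}w)=\delta^{-1}\langle p,\mu\rangle w,\qquad w(0)=u_0 .
\]
Setting $h:=\delta^{1/4}$, $W(\tau,\cdot):=w(\delta\tau,\cdot)$ and multiplying by $\delta$, the scalings $\delta(\pa_x-\phi')^4=(hD-\varphi')^4$ and $\delta(\pa_x-\phi')^2=h^2(hD-\varphi')^2$ (with $D:=\pa_x$) give, on $\tau\in[0,1]$,
\[
\pa_\tau W+(hD-\varphi')^4 W+h^2(hD-\varphi')^2 W+\delta\,e^{\phi}\mc N(e^{-\phi}W)=\langle p,\mu\rangle W,\qquad W(0)=u_0 .
\]
The zeroth–order part of $(hD-\varphi')^4$ equals $(\varphi')^4+O(h)$, the term $h^2(hD-\varphi')^2$ is $O(h^2)$, and the conjugated nonlinearity carries a prefactor $e^{-c\phi}$ with $c>0$ (namely $c=1$ for \eqref{ks} and $c=2$ for \eqref{ch}); since $\varphi>0$ on the compact torus, $\min_{\T}\varphi>0$ and this prefactor is $\le e^{-c'\delta^{-1/4}}$. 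Formally the limit equation is $\pa_\tau W+(\varphi')^4 W=\langle p,\mu\rangle W$, whose value at $\tau=1$ is $e^{M}u_0$ with $M:=\langle p,\mu\rangle-(\varphi')^4$; and $W(1)=e^{\phi}\mc R_\delta(e^{-\phi}u_0,\delta^{-1}p)$ is exactly the quantity in the statement, while $e^{M}u_0=e^{-(\varphi')^4+\langle p,\mu\rangle}u_0$. It remains to justify this limit in $H^s$.

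The decisive structural point is that $-(hD-\varphi')^4$ is dissipative up to a bounded perturbation, \emph{uniformly} in $h$. Indeed the real part of its symbol is $\operatorname{Re}(ihk-\varphi')^4=(\varphi')^4-6(\varphi')^2h^2k^2+h^4k^4\ge\tfrac12 h^4k^4-17(\varphi')^4$, strongly dissipative at high frequencies and only boundedly anti-dissipative at intermediate ones. At the operator level, with $B:=h\pa_x-\varphi'$ one has the identity $\langle B^4 W,W\rangle=\|B^2 W\|^2+\langle B^2 W,(4h\varphi'\pa_x+2h\varphi'')W\rangle$; absorbing the $O(h)$ cross term by Young's inequality and interpolation yields the coercivity $\langle B^4 W,W\rangle_{H^s}\ge\tfrac12\|B^2 W\|_{H^s}^2-C\|W\|_{H^s}^2\ge c\,h^4\|W\|_{H^{s+2}}^2-C\|W\|_{H^s}^2$, where the constants depend on $\varphi$ through $\|\varphi\|_{H^{2s+4}}$ (this regularity controls the commutators produced by the $H^s$–pairing). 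Combined with the exponential smallness of the nonlinear term — handled by a continuity/bootstrap argument on the ball $\|W\|_{H^s}\le 2\|u_0\|_{H^s}$, using that $s>\tfrac12$ makes $H^s$ an algebra — and Gronwall's inequality, this gives global existence on $[0,1]$ (hence $u$ on $[0,\delta]$, via \Cref{existence} and continuation) together with the uniform-in-$\delta$ bounds $\|W\|_{L^\infty((0,1);H^s)}\le C\|u_0\|_{H^s}$ and $h^4\!\int_0^1\|W\|_{H^{s+2}}^2\,d\tau\le C\|u_0\|_{H^s}^2$. The same computation applied to the difference of two solutions shows that $u_0\mapsto W(1)$ is Lipschitz on bounded subsets of $H^s$, uniformly in $\delta$.

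With these estimates I would first prove the limit for smooth data and then pass to the general case by density. For $u_0$ regular enough that $\bar W:=e^{\tau M}u_0$ lies in $H^{s+4}$, set $Z:=W-\bar W$; then $Z(0)=0$ and $Z$ solves the same dissipative equation with forcing $\mc G_h:=-\big[(hD-\varphi')^4-(\varphi')^4\big]\bar W-h^2(hD-\varphi')^2\bar W-\delta\,e^{\phi}\mc N(e^{-\phi}W)$. Each term of the first bracket carries at least one power of $h$ in front of a derivative of $\bar W$ of order $\le 4$, so $\|\mc G_h\|_{H^s}\lesssim h\,\|\bar W\|_{H^{s+4}}+(\text{exponentially small})\to 0$, i.e. $\mc G_h\to 0$ in $L^1((0,1);H^s)$. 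The coercivity above then gives $\|Z(1)\|_{H^s}\lesssim\int_0^1\|\mc G_h\|_{H^s}\,d\tau\to 0$, that is $W(1)\to e^{M}u_0$ in $H^s$. For general $u_0\in H^s$ I approximate by smooth $u_0^n\to u_0$: the uniform Lipschitz bound controls $\|W(1)[u_0]-W(1)[u_0^n]\|_{H^s}$ and the continuity of $v\mapsto e^{M}v$ on $H^s$ controls $\|e^{M}u_0-e^{M}u_0^n\|_{H^s}$, both uniformly in $\delta$, so a standard three-$\varepsilon$ argument transfers the limit to $u_0$.

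The main obstacle is the uniform-in-$h$ coercivity of the conjugated fourth–order operator $-(hD-\varphi')^4$, whose symbol is genuinely non-accretive at intermediate frequencies, and closing the $H^s$ a priori estimate despite the large ($\delta^{-1/4}$) coefficients multiplying the intermediate-order derivatives; this is precisely where the hypotheses $\varphi\in H^{2s+4}$ (commutator control) and $\varphi>0$ (exponential decay of the conjugated nonlinearity) enter. By contrast, the nonlinear terms $\mc N_{KS}$ and $\mc N_{CH}$ are not an obstacle: conjugation produces the exponentially decaying prefactor $e^{-c\phi}$, which dominates any polynomial growth in $\delta^{-1/4}$ coming from the weight and its derivatives.
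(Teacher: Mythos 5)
Your mechanism is the same as the paper's: conjugate by $e^{\phi}$, rescale time to $[0,1]$, run an $H^s$ energy estimate in which the only dissipation available is of size $h^4=\delta$ at the level of $\|\cdot\|_{H^{s+2}}$, use $\min_{\T}\varphi>0$ to make the conjugated nonlinearity exponentially small, and close with Gronwall plus a bootstrap for existence up to rescaled time $1$ (the paper does this for $v:=\Psi(\delta\,\cdot)-w$ in \eqref{equn_v_t}). Your operator identity $\langle B^4W,W\rangle=\|B^2W\|^2+\langle B^2W,(4h\varphi'\partial_x+2h\varphi'')W\rangle$ and the symbol bound $\operatorname{Re}(ihk-\varphi')^4\ge\tfrac12 h^4k^4-17(\varphi')^4$ are correct, and the uniform Lipschitz stability you invoke for the density step is obtainable from the same estimates.

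There is, however, a genuine gap in the regime $\tfrac12<s<1$, which is covered by the hypotheses. Your smooth-data step needs $\bar W(\tau)=e^{\tau M}u_0\in H^{s+4}$, with $M=\langle p,\mu\rangle-(\varphi')^4$, so that the bound $\|\mc G_h\|_{H^s}\lesssim h\|\bar W\|_{H^{s+4}}$ makes sense. But $\varphi$ is only assumed to lie in $H^{2s+4}$, so $(\varphi')^4$, hence $M$ and $e^{\tau M}$, lie only in $H^{2s+3}$; therefore $\bar W\in H^{2s+3}$ at best, \emph{no matter how smooth $u_0$ is}. Since $2s+3<s+4$ exactly when $s<1$, for $\tfrac12<s<1$ the set of data for which $\bar W\in H^{s+4}$ is not dense (essentially no nontrivial datum qualifies when $(\varphi')^4$ is genuinely rough), so approximating "by smooth $u_0^n$" does not produce admissible profiles $\bar W^n$ and the three-$\varepsilon$ argument cannot start. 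The overspending happens in the estimate $\|Z(1)\|_{H^s}\lesssim\int_0^1\|\mc G_h\|_{H^s}\,d\tau$, where you place all four derivatives of the term $h^4\partial_x^4\bar W$ (and three derivatives in the $h^3$-terms) on $\bar W$. The repair is to pair these terms against $Z$ and integrate by parts, absorbing $\varepsilon h^4\|Z\|_{H^{s+2}}^2$ into your coercivity, so that only $\|\bar W\|_{H^{s+2}}$ and $\|\bar W\|_{H^{s+3}}$ appear; $H^{s+3}$-regularity of the limit profile is available for every $s>\tfrac12$ because $2s+3\ge s+3$. This is precisely what the paper does: in \eqref{equn_v_t} the term $-\delta\partial_x^4 w$ is paired with $\partial_x^{2s}v$ and two derivatives are moved across, so only $\|w\|_{s+2}$ and $\|w\|_{s+3}$ ever enter; and since $u_0$ itself is merely $H^s$, the paper replaces your soft density argument by the quantitative mollification $u_0^{\delta}:=e^{\delta^{1/8}(-\partial_x^4)}u_0$, whose controlled blow-up $\|u_0^{\delta}\|_{s+4}\le C\delta^{-1/8}$ is beaten by the positive powers of $\delta$ multiplying the error terms. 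With that modification (or by restricting your density argument to $s\ge1$ and treating $\tfrac12<s<1$ via the paper's bookkeeping), your proof scheme goes through.
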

The proof of this result is postponed to \Cref{sec_asymp}.

This asymptotic behaviour shows that, starting from $u_0$, the controlled solution can reach an arbitrary neighbourhood of any point of the form $e^{\phi}u_0$, $\phi\in  \mathcal{H}_1$ within a short time interval, where $ \mathcal{H}_1$ is the vector space generated by elements of the form
\begin{align}\label{asymp_h}
    \varphi_0 - \sum_{k=1}^{N} \left(\varphi_k'\right)^{4}
\end{align}
for some integer $N \ge 1$ and vectors $\varphi_0, \varphi_1, \ldots, \varphi_N \in \mathcal H_0$ (See \Cref{sec_sat} below for the detailed expressions). The subspaces $\mathcal{H}_j, \, j\in \mathbb{N}$ is generated by the nonlinear terms inherited from the studied equation. This observation is instrumental in establishing small-time approximate controllability using a large control acting within a three-dimensional subspace. By iterating the aforementioned argument, we further show that, starting from $u_0$, one can approximately reach any point $e^{\phi}u_0$, $\phi\in  \mathcal{H}_2$  in small time, where the space $\mathcal H_2$ is defined as in \eqref{asymp_h}, but with vectors $\varphi_0, \varphi_1, \ldots, \varphi_N \in \mathcal H_1$. Proceeding inductively, we construct a non-decreasing sequence of subspaces $\{\mathcal H_j\}_{j\ge 1}$ such that every point of the form $e^{\phi}u_0$, $\phi\in  \mathcal{H}_j$ is approximately reachable from $u_0$ by means of a control $Q$ taking values only in $\mathcal H_0$. 
Using the saturating property of $\mathcal H_0$, namely  \Cref{density_prpty}, we deduce that $\bigcup_{j=0}^{\infty} \mathcal H_j$
is dense in $H^s(\mathbb{T})$. As a consequence, the system \eqref{ctrl_prblm} is approximately controllable to any target of the form $e^{\phi}u_0$, $\phi\in H^s(\T)$ in small time. Next, using the properties of the initial data $u_0$ and the target $ u_1$, we will prove small-time $L^2$-approximate controllability, as shown in the proof of \Cref{main_thm_approx}--\Cref{point_11}. Finally, we prove that $H^s$-approximate controllability holds for any prescribed time horizon $T>0$. The argument consists in first steering the system, in small time, to a neighbourhood of a suitable staionary state, then maintaining the trajectory along this state over a sufficiently long time, and subsequently driving the solution to a neighbourhood of the target $u_1$ in a small time using an appropriate control; see \Cref{main_thm_approx}--\Cref{point_12}.

Motivated by the above discussion, we introduce and study the so-called saturation property, which plays a fundamental role in establishing global approximate controllability of \eqref{ctrl_prblm}.
\subsection{Saturating subspaces}\label{sec_sat}
For any vector space $G$, let us define
\begin{equation*}
	\mathcal{F}(G)
	:= \operatorname{span}\Big\{\varphi_0 - \sum_{k=1}^{N} \left(\varphi_k'\right)^{4} 
	\ :\ N \ge 1,\ \varphi_0,\dots,\varphi_N \in G \Big\}.
\end{equation*}
Using this definition, we construct the sequence
\begin{equation*}
\mathcal{H}_0 := \operatorname{span}\{1,\cos x,\sin x\}, 
\qquad 
\mathcal{H}_{j+1} := \mathcal{F}(\mathcal{H}_j),\ j \ge 0, \text{ and } \mathcal{H}_\infty := \bigcup_{j=0}^{\infty} \mathcal{H}_j.
\end{equation*}
We now prove that $\mathcal{H}_0$ is a saturating subspace.
More precisely, we have the following result
\begin{proposition}\label{density_prpty}
	For every $s \ge 0$, the space $\mathcal{H}_\infty$ is dense in $H^{s}(\mathbb{T})$.
\end{proposition}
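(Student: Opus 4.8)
The plan is to reduce the statement to the classical fact that trigonometric polynomials are dense in $H^s(\mathbb T)$, and then to show that $\mathcal H_\infty$ contains every Fourier mode $e^{ikx}$, $k\in\mathbb Z$. The engine of the argument is a polarization identity that converts the quartic expression $(\varphi')^4$ appearing in the definition of $\mathcal F$ into the symmetric multilinear product of four derivatives. Throughout I would work with the complexifications $\mathcal H_j^{\mathbb C}:=\mathcal H_j\otimes\mathbb C$, noting that $\cos kx,\sin kx\in\mathcal H_j$ if and only if $e^{\pm ikx}\in\mathcal H_j^{\mathbb C}$, so that it is enough to produce all $e^{ikx}$ inside $\mathcal H_\infty^{\mathbb C}$.

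First I record two elementary facts about $\mathcal F$. Taking $N=1$, $\varphi_1=0$ and letting $\varphi_0$ range over $G$ shows $G\subseteq\mathcal F(G)$; hence the sequence $\{\mathcal H_j\}$ is non-decreasing, and every mode once produced is retained at all later stages. Next, for $\psi_1,\dots,\psi_4\in G$ the polarization identity
\begin{equation*}
\psi_1'\psi_2'\psi_3'\psi_4'
=\frac{1}{4!}\sum_{S\subseteq\{1,2,3,4\}}(-1)^{4-|S|}\Big(\big(\textstyle\sum_{i\in S}\psi_i\big)'\Big)^4
\end{equation*}
expresses the multilinear product as a finite linear combination of terms $(\varphi')^4$ with $\varphi=\sum_{i\in S}\psi_i\in G$; consequently $\psi_1'\psi_2'\psi_3'\psi_4'\in\mathcal F(G)$. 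Since this product is multilinear in $(\psi_1,\dots,\psi_4)$, the same membership extends to $\psi_i\in G^{\mathbb C}$, giving $\psi_1'\psi_2'\psi_3'\psi_4'\in\mathcal F(G)^{\mathbb C}=\mathcal H_{j+1}^{\mathbb C}$ when $G=\mathcal H_j$. Specializing to $\psi_i=e^{ik_ix}$ yields the \emph{frequency-addition rule}: if $e^{ik_1x},\dots,e^{ik_4x}\in\mathcal H_j^{\mathbb C}$ with all $k_i\neq0$, then $e^{i(k_1+k_2+k_3+k_4)x}\in\mathcal H_{j+1}^{\mathbb C}$, because $\psi_1'\psi_2'\psi_3'\psi_4'=k_1k_2k_3k_4\,e^{i(k_1+\cdots+k_4)x}$ with nonzero prefactor. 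The constraint $k_i\neq0$ records that constant modes are annihilated by differentiation, and is the only delicate point in the bookkeeping.

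With the frequency-addition rule I would argue by induction that $\mathcal H_\infty^{\mathbb C}$ contains $e^{ikx}$ for all $k$. For the base step, $\mathcal H_0^{\mathbb C}$ carries the frequencies $\{-1,0,1\}$; choosing $k_i\in\{\pm1\}$ produces the even frequencies $\{0,\pm2,\pm4\}$, so together with $\mathcal H_0\subseteq\mathcal H_1$ we get $\{e^{ikx}:|k|\le2\}\subseteq\mathcal H_1^{\mathbb C}$. For the inductive step, assume $\{e^{ikx}:|k|\le M\}\subseteq\mathcal H_j^{\mathbb C}$ for some $M\ge2$. The modes with $|k|\le M$ persist in $\mathcal H_{j+1}^{\mathbb C}$ by monotonicity, while the quadruple $(k_1,k_2,k_3,k_4)=(M,1,1,-1)$ has nonzero entries bounded by $M$ and sum $M+1$, so the rule gives $e^{i(M+1)x}\in\mathcal H_{j+1}^{\mathbb C}$; the conjugate choice gives $e^{-i(M+1)x}$. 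Hence $\{e^{ikx}:|k|\le M+1\}\subseteq\mathcal H_{j+1}^{\mathbb C}$, and the induction yields $\{e^{ikx}:|k|\le 2+m\}\subseteq\mathcal H_{1+m}^{\mathbb C}$ for every $m\ge0$. Therefore $\mathcal H_\infty^{\mathbb C}\supseteq\{e^{ikx}:k\in\mathbb Z\}$, so $\mathcal H_\infty$ contains all real trigonometric polynomials and is dense in $H^s(\mathbb T)$ for every $s\ge0$.

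The routine parts are the polarization identity and the elementary arithmetic of representing integers as sums of four nonzero bounded integers. The one point that requires care — and what I expect to be the main obstacle — is the vanishing of the quartic coefficient $k_1k_2k_3k_4$ whenever a frequency is zero. This forces every product to avoid the constant mode and hence to jump by even amounts at the first step, which is exactly why the base case delivers only even frequencies and why the induction is phrased to adjoin a single frequency at a time through a mixed-sign quadruple.
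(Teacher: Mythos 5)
Your proof is correct, but it follows a genuinely different route from the paper. The paper never polarizes fully: it only expands $(\varphi_1'\pm\varphi_2')^4$ to extract the cross term $(\varphi_1')^2(\varphi_2')^2$, then uses $\varphi_2=\cos x,\sin x$ together with $\sin^2x+\cos^2x=1$ to conclude $(\varphi')^2\in\mathcal H_{j+1}$ for every $\varphi\in\mathcal H_j$. This shows that the quartic chain $\{\mathcal H_j\}$ dominates the \emph{quadratic} saturation chain $\{\widetilde{\mathcal H}_j\}$ built from $\varphi_0-\sum_k(\varphi_k')^2$, and the density is then imported wholesale from Proposition~2.6 of Duca--Nersesyan, where that quadratic chain is shown to contain all modes $\cos(nx),\sin(nx)$. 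Your argument instead stays entirely inside the quartic structure: the full polarization identity gives every product $\psi_1'\psi_2'\psi_3'\psi_4'$, complexification turns this into the frequency-addition rule $e^{ik_1x},\dots,e^{ik_4x}\mapsto e^{i(k_1+\cdots+k_4)x}$ (valid when all $k_i\neq 0$), and the induction with quadruples $(M,1,1,-1)$ manufactures every Fourier mode directly. What the paper's route buys is brevity, at the price of an external citation; what yours buys is a self-contained proof in which the mode-generation mechanism is explicit, at the price of the complexification bookkeeping and the zero-frequency caveat --- which you identify and handle correctly (only even frequencies at the first step, one new frequency per step thereafter). Both arguments are sound; your handling of the degenerate case $k_i=0$, of the inclusion $G\subseteq\mathcal F(G)$ via $\varphi_1=0$, and of the passage from $e^{\pm ikx}\in\mathcal H_j^{\mathbb C}$ back to $\cos kx,\sin kx\in\mathcal H_j$ (using that $\mathcal H_j$ consists of real-valued functions) are all accurate.
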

\begin{proof}
	By construction, one has $\mathcal{H}_j \subset \mathcal{H}_{j+1}$ for all $j \ge 0$.  
	Let $\varphi_1,\varphi_2 \in \mathcal{H}_j$. Then the following identities hold:
	\[
	(\varphi_1' + \varphi_2')^{4} - (\varphi_1')^{4} - (\varphi_2')^{4}
	= 4(\varphi_1')^{3}\varphi_2' + 6(\varphi_1')^{2}(\varphi_2')^{2}
	+ 4\varphi_1'(\varphi_2')^{3} \in \mathcal{H}_{j+1},
	\]
	and similarly,
	\[
	(\varphi_1' - \varphi_2')^{4} - (\varphi_1')^{4} - (\varphi_2')^{4}
	= -4(\varphi_1')^{3}\varphi_2' + 6(\varphi_1')^{2}(\varphi_2')^{2}
	- 4\varphi_1'(\varphi_2')^{3} \in \mathcal{H}_{j+1}.
	\]
	Since $\mathcal{H}_{j+1}$ is a linear subspace, adding these two identities yields
	\begin{equation}\label{prime_prdct_squr_in_Hj+1}
		(\varphi_1')^{2}(\varphi_2')^{2} \in \mathcal{H}_{j+1}.
	\end{equation}
	Observe that $\sin x, \cos x \in \mathcal{H}_j$ for all $j \ge 0$.  
	Applying \eqref{prime_prdct_squr_in_Hj+1} with $\varphi_2 = \cos x$ and $\varphi_2 = \sin x$ respectively,  
	for any $\varphi \in \mathcal{H}_j$ we deduce
	\begin{equation*}
	(\varphi')^{2}\sin^{2}x,\quad
	(\varphi')^{2}\cos^{2}x \in \mathcal{H}_{j+1}.
	\end{equation*}
	Using again the fact that $\mathcal{H}_{j+1}$ is a subspace, we conclude that
	\begin{equation}\label{squre_in_Hj+1}
		(\varphi')^2 \in \mathcal{H}_{j+1},
		\qquad \forall\, \varphi \in \mathcal{H}_j,\quad j \ge 0.
	\end{equation}
	Let us define another chain of subspaces:
	\begin{equation*}
	\widetilde{\mathcal{H}}_{0} := \mathcal{H}_0,
	\end{equation*}
	and for $j \ge 0$,
	\begin{equation*}
	\widetilde{\mathcal{H}}_{j+1}
	:= \operatorname{span}\Big\{
	\varphi_0 - \sum_{k=1}^{N} (\varphi_k')^{2}
	\ :\ N \ge 1,\ \varphi_0,\dots,\varphi_N \in \widetilde{\mathcal{H}}_j
	\Big\}.
	\end{equation*}
\paragraph{\textit{Claim}.}
For all $j \ge 0$, one has $\widetilde{\mathcal{H}}_j \subset \mathcal{H}_j$.

\noindent
We prove this claim by induction on $j\in \mathbb{N}.$ The case $j=0$ is immediate. 
    Assume that $\widetilde{\mathcal{H}}_j 
    \subset \mathcal{H}_j$ for some $j\in \mathbb{N}^*.$ Let $\varphi\in \widetilde{\mathcal{H}}_{j+1}.$ By the definition of $\widetilde{\mathcal{H}}_{j+1} $, it follows that
    \begin{align*}
        \varphi=\varphi_0-\sum_{k=1}^{N}(\varphi_k')^2, \text{ for some } N\geq 1,\, \varphi_0,\, \varphi_k\in \widetilde{\mathcal{H}}_j .
    \end{align*}
 Using the induction hypothesis, we can say that $ \varphi_0, \varphi_k\in {\mathcal{H}}_j.$ Thanks to \eqref{squre_in_Hj+1}, we have  $\varphi=\varphi_0-\sum_{k=1}^{N}(\varphi_k')^2\in {\mathcal{H}}_{j+1}.$ Which completes the proof of the claim. 
 
 \noindent
 As a consequence, it immediately follows that
	\[
	\bigcup_{j=0}^{\infty} \widetilde{\mathcal{H}}_j
	\subset 
	\bigcup_{j=0}^{\infty} \mathcal{H}_j.
	\]
Following arguments similar to those in \cite[Proposition 2.6]{Duca_Nersesyan_JEMS_2025}, we obtain
	\[
	\{\sin(nx), \cos(nx) : n \in \mathbb{Z}\}
	\subset 
	\bigcup_{j=0}^{\infty} \widetilde{\mathcal{H}}_j.
	\]
	From the above two inclusions, 
	we deduce that $\mathcal{H}_\infty$ is dense in $H^{s}(\mathbb{T})$.
\end{proof}

\subsection{Small-time global approximate null controllability}
The conjugated dynamics limit \Cref{conjugate_limit}, together with the saturation property \Cref{density_prpty}, plays a crucial role in establishing approximate controllability. In particular, \Cref{conjugate_limit} immediately implies that the trajectory can be driven arbitrarily close to the origin. More precisely, a straightforward observation is the following: for any $u_0 \in H^s(\T)$ and $\varepsilon > 0$, one can choose a constant $r > 0$ such that
\[
e^{r}\varepsilon > 2\left\|u_0\right\|_s \quad \Longrightarrow \quad \left\|e^{-r}u_0\right\|_s < \frac{\varepsilon}{2}.
\]
Since $-r \in \mathcal{H}_0$, one can  write $-r = \sum_{i=1}^{3} p_i \mu_i$ for some vector $\widehat{p} := (p_1,p_2,p_3) \in \R^3$. Applying the conjugated dynamics limit (\Cref{conjugate_limit}) with $\varphi = 0$ and this particular choice of $p = \widehat{p}$, we obtain a time $\delta > 0$ such that the solution of \eqref{ctrl_prblm}--\eqref{fourier_mods} is well-defined on $[0,\delta]$ and satisfies
\[
\big\| \mathcal{R}_{\delta}\left(u_0, \delta^{-1}\widehat{p}\right) - e^{-r}u_0 \big\|_s < \frac{\varepsilon}{2}.
\]
Using the triangle inequality, and setting $q := \widehat{p}/\delta$, we deduce
\[
\left\|\mathcal{R}_{\delta}\left(u_0, q\right)\right\|_s 
\le 
\big\| \mathcal{R}_{\delta}\left(u_0, \delta^{-1}\widehat{p}\right) - e^{-r}u_0 \big\|_s
+ \left\|e^{-r}u_0\right\|_s
< \varepsilon.
\]
This shows that any initial state $u_0$ can be driven arbitrarily close to zero in an arbitrarily small time. In the literature, this property is referred to as small-time global approximate null controllability. A natural question then arises: Can one design controls that steer the system from a given initial state to an arbitrarily close, preassigned target state, possibly under certain conditions on the nature of the data? Furthermore, can such controllability be achieved at a prescribed time? \Cref{sec_smalltime} addresses these questions and is devoted to establishing the corresponding controllability results below.
\subsection{Concatenation property}
We end this section by discussing the concatenation of two scalar controls. Let us recall that the concatenation $p * q$ of two scalar control laws $p: [0, T_1] \to \mathbb{R}, \ q: [0, T_2] \to \mathbb{R}$ is the control law defined on $[0, T_1 + T_2]$ as follows 
\begin{align}\label{con}
(p * q)(t) = \begin{cases}
p(t), & t \in [0, T_1]\\
q(t - T_1) , & t \in (T_1, T_1 + T_2].
\end{cases}
\end{align}
Such a definition naturally extends componentwise to controls taking values in $\mathbb{R}^3$. Assume that for the control $p * q,$ the solution of \eqref{ctrl_prblm} with initial data $u_0$ exists for the time interval $[0, {T}]$ where ${T} \in (T_1, T_1 + T_2]$ then the associated flow satisfies the concatenation property 
\begin{align}\label{flow_prpty}
\mathcal{R}_{T_1 + t}\left(u_0, p * q\right) = \mathcal{R}_t\left( \mathcal{R}_{T_1}\left(u_0, p\right), q\right), \ 0 < t < {T} - T_1.
\end{align}

\section{Small-time approximate controllability}\label{sec_smalltime}
This section is devoted to proving the small-time approximate controllability result stated in \Cref{main_thm_approx}. We begin by discussing the following property of the dynamics of \eqref{ctrl_prblm}--\eqref{fourier_mods} in small time. 
\begin{proposition}\label{prpn_exp_pertubed_initial}
Let $s > \frac{1}{2} \text{ and } u_0, \, \phi  \in H^s(\mathbb{T}).$ For any $\varepsilon, \, T > 0,$ there exist $\tau \in (0, T]$ and $p \in L^2((0, \tau); \mathbb{R}^3)$ such that the solution $\mathcal{R}(u_0, p)$ of \eqref{ctrl_prblm}--\eqref{fourier_mods} is well-posed in $[0, \tau]$ and $$\left\|\mathcal{R}_\tau\left( u_0, p\right) - e^{\phi}u_0\right\|_{s} < \varepsilon.$$
\end{proposition}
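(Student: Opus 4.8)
The plan is to obtain the statement from the small-time conjugated-dynamics limit \Cref{conjugate_limit}, the saturation/density property \Cref{density_prpty}, and the well-posedness package \Cref{existence}, organised as an induction over the subspaces $\mathcal H_j$ followed by a density reduction. Since $s>\tfrac12$, the space $H^s(\T)$ is a Banach algebra, so the map $\psi\mapsto e^{\psi}u_0$ is locally Lipschitz from $H^s(\T)$ to itself; combined with the density of $\mathcal H_\infty$ from \Cref{density_prpty}, it therefore suffices to fix $\phi_\star\in\mathcal H_{j_\star}$ for some $j_\star\in\mathbb N$ with $\|e^{\phi}u_0-e^{\phi_\star}u_0\|_s<\varepsilon/2$ and then to steer $u_0$ into the $\varepsilon/2$-ball around $e^{\phi_\star}u_0$ in time at most $T$.

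The core is the inductive claim $P(j)$, stated uniformly in the initial datum so that it may be concatenated: \emph{for every $v_0\in H^s(\T)$, every $\phi\in\mathcal H_j$, and every $\varepsilon,T>0$, there exist $\tau\in(0,T]$ and a control $p$ such that $\mathcal R(v_0,p)$ is well-defined on $[0,\tau]$ and $\|\mathcal R_\tau(v_0,p)-e^{\phi}v_0\|_s<\varepsilon$.} The base case $P(0)$ follows from \Cref{conjugate_limit} applied with a constant profile $\varphi$ (so that $(\varphi')^4\equiv0$): writing $\phi=\langle\widehat p,\mu\rangle$ gives $\mathcal R_\delta(v_0,\delta^{-1}\widehat p)\to e^{\phi}v_0$ as $\delta\to0^+$, and a sufficiently small $\delta\le T$ suffices. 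For the inductive step the elementary move is to realise the factor $e^{-(\varphi')^4}$ for a given $\varphi\in\mathcal H_j$, which is a trigonometric polynomial, hence smooth and admissible in \Cref{conjugate_limit}. After replacing $\varphi$ by $\varphi+C$ with $C$ a large constant — leaving $(\varphi')^4$ unchanged while ensuring the positivity $\varphi>0$ required by \Cref{conjugate_limit} — I would concatenate, via the flow identity \eqref{flow_prpty}, three phases: first steer $v_0$ near $e^{-\delta^{-1/4}\varphi}v_0$ by $P(j)$; then run the free dynamics (control $0$) on $[0,\delta]$ and invoke \Cref{conjugate_limit} to reach a neighbourhood of $e^{-\delta^{-1/4}\varphi}e^{-(\varphi')^4}v_0$; finally apply the factor $e^{+\delta^{-1/4}\varphi}$ by $P(j)$ once more. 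Both exponents $\pm\delta^{-1/4}\varphi$ lie in the vector space $\mathcal H_j$, so $P(j)$ supplies them. Composing this move with a factor $e^{\varphi_0}$, $\varphi_0\in\mathcal H_j$ (again $P(j)$), and concatenating over $k=1,\dots,N$ realises any generator $\varphi_0-\sum_{k=1}^N(\varphi_k')^4$, and hence, by passing to linear combinations, all of $\mathcal H_{j+1}=\mathcal F(\mathcal H_j)$. This is $P(j+1)$.

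The quantitative heart is the order of the quantifiers, because the conjugation factors $e^{\pm\delta^{-1/4}\varphi}$ blow up as $\delta\to0^+$. Given $\varepsilon$ and $T$, I would \emph{first} fix $\delta\le T/2$ small enough that the limit in \Cref{conjugate_limit} holds up to accuracy of order $\varepsilon$; this freezes the now-bounded multipliers $e^{\pm\delta^{-1/4}\varphi}$, and, the intermediate states being thereby bounded by some $R$, also freezes the stability constant in \eqref{stability}. \emph{Only then} would I demand from the two applications of $P(j)$ accuracies small relative to this fixed $\delta$: the continuation part of \Cref{existence} guarantees that the perturbed trajectories remain well-defined on the relevant subintervals, while the stability estimate \eqref{stability} controls the amplification of the errors through the finitely many concatenated phases, the total time being kept below $T$ by allocating a small budget to each phase. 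Together with the density reduction above, $P(j_\star)$ then yields the proposition.

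The step I expect to be the main obstacle is the \emph{sign} of the quartic term. Because $\nu_1>0$, \Cref{conjugate_limit} produces only the contracting direction $-(\varphi')^4$, so at first sight the directly reachable exponents form a convex cone rather than the full subspace $\mathcal H_{j+1}$ needed to close the induction — indeed, the negative-coefficient combinations of generators carry \emph{expanding} directions $+(\varphi')^4$. The resolution is precisely the algebra underlying \Cref{density_prpty}: a positive fourth power $+(\varphi')^4$ of the derivative of a trigonometric polynomial can be rewritten as an element of $\mathcal H_j$ minus a sum of fourth powers $\sum_m(\chi_m')^4$ with $\chi_m\in\mathcal H_j$, by matching the finitely many nonzero Fourier coefficients — the same manipulation that yields $(\varphi_1')^2(\varphi_2')^2\in\mathcal H_{j+1}$ in the proof of \Cref{density_prpty}. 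Hence the expanding directions already belong to the reachable cone, the cone coincides with $\mathcal H_{j+1}$, both signs of every conjugation base become available at the next level, and the induction indeed closes.
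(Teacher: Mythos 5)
Your proposal is correct and follows essentially the same route as the paper's proof: reduction via the density of $\mathcal H_\infty$ (\Cref{density_prpty}), an induction on the levels $\mathcal H_j$ whose base case invokes \Cref{conjugate_limit} with trivial profile, and an inductive step consisting of the same three-phase concatenation (steer to $e^{-\delta^{-1/4}\widetilde\varphi}v_0$ by the inductive hypothesis, run the free dynamics on $[0,\delta]$, then remove the conjugation factor), with the same ordering of quantifiers: $\delta$ is frozen first by \Cref{conjugate_limit}, and only then are the accuracies of the inductive phases chosen, errors being propagated through \eqref{stability} and well-definedness through \Cref{existence}.

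The one place where you go beyond the paper is the cone-versus-span issue raised in your last paragraph, and it is worth recording. The paper's proof simply asserts that any $\phi\in\mathcal H_{N+1}$ can be written as $\phi_0-\sum_{k=1}^{d}(\phi_k')^4$ with $\phi_i\in\mathcal H_N$, although $\mathcal H_{N+1}=\mathcal F(\mathcal H_N)$ is defined as a \emph{span}, so that negative coefficients a priori produce expanding directions $+(\varphi')^4$; you correctly flag that the directly reachable exponents form only a convex cone, and you claim the cone in fact equals the span. The claim is true, but your justification is loose: the manipulation yielding $(\varphi_1')^2(\varphi_2')^2\in\mathcal H_{j+1}$ in \Cref{density_prpty} itself only gives \emph{span} membership (it subtracts one cone element from another), so it cannot be cited as the mechanism. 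A clean argument is translation averaging: every element of $\mathcal H_j$ is a trigonometric polynomial and $\mathcal H_j$ is translation invariant (both facts follow by an easy induction, since $\mathcal H_0$ has these properties and $\mathcal F$ preserves them). Given $\varphi\in\mathcal H_j$ with $\deg(\varphi')\le D$, choose $L>4D$ and set $\chi_l(x)=\varphi\bigl(x+\tfrac{2\pi l}{L}\bigr)\in\mathcal H_j$; then averaging kills every Fourier mode of $(\varphi')^4$ not divisible by $L$, so
\begin{equation*}
(\varphi')^4+\sum_{l=1}^{L-1}(\chi_l')^4=\sum_{l=0}^{L-1}\Bigl(\varphi'\bigl(\cdot+\tfrac{2\pi l}{L}\bigr)\Bigr)^4=\mathrm{const}\in\mathcal H_0\subset\mathcal H_j,
\end{equation*}
whence $+(\varphi')^4$ is a cone element, your induction closes, and, as a byproduct, this also supplies the justification missing from the paper's own proof at that step.
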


\begin{proof}
We start by assuming the density property of $\mathcal{H}_{\infty}$ given in Proposition~\ref{density_prpty}. With this, it is enough to prove that the following property holds for all $N \in \mathbb{N}$:
\begin{itemize}
\item[($P_N$)] For any $u_0 \in H^s(\mathbb{T})$, $\phi \in \mathcal{H}_N$, and any $\varepsilon, T > 0$, there exist $\tau \in (0,T]$ and a piecewise constant control $p : [0,\tau] \to \mathbb{R}^3$, with $p \in L^2((0,\tau);\mathbb{R}^3)$, such that the corresponding solution of \eqref{ctrl_prblm}--\eqref{fourier_mods} with initial datum $u_0$ is well-posed in $[0,\tau]$ and satisfies $$\left\| \mathcal{R}_\tau(u_0, p) - e^{\phi} u_0 \right\|_{s} < \varepsilon.$$
\end{itemize}
So our next aim is to prove the property $(P_N)$, and we will use induction on the index $N \in \mathbb{N}.$ The proof is motivated form \cite{Duca_Nersesyan_JEMS_2025}, which establishes the small-time approximate controllability of the Schr\"odinger equation with bilinear control.
	
\paragraph{$\bullet$  $N = 0$} \ If $\phi \in \mathcal{H}_0,$ then by the definition of $\mathcal{H}_0,$ there exists $\lambda := (\lambda_1, \lambda_2, \lambda_3 )\in \mathbb{R}^3$ such that $$\phi(x) = \sum_{i=1}^{3} \lambda_i \mu_i(x).$$
Then by Proposition~\ref{conjugate_limit} with $\varphi = 0$ and $p=\lambda$, we deduce that
\begin{align*}
\mathcal{R}_{\delta}\left( u_0, \delta^{-1} \lambda\right) \to e^{ \langle \lambda, \mu \rangle} u_0 \ \text{ in } H^s, \ \text{ as } \delta \to 0^+.
\end{align*}
Thus there exists a $\tau \in (0, T)$ such that
\begin{align*}
\left\| \mathcal{R}_{\tau}\left(u_0, p^{\tau}\right) - e^{\phi} u_0 \right\|_s < \varepsilon,
\end{align*}
where the constant control $p^{\tau} := \lambda/\tau \in \mathbb{R}^3,$ which proves the property $(P_0).$

\paragraph{$\bullet$ Inductive step: $N \implies N + 1$} Assume that $(P_{N})$ holds for some $N\in \mathbb{N}^*$. We shall prove for $(P_{N + 1})$ holds true. Let $\phi \in \mathcal{H}_{N + 1},$ then by definition of $\mathcal{H}_{N + 1}$ there exists $\phi_0, \phi_1, \dots, \phi_d \in \mathcal{H}_N$ such that $$\phi = \phi_0 - \sum_{k = 1}^{d} \left(\phi_k'\right)^4,$$
for some $d \in \mathbb{N}^*.$ We first
prove the result using induction on $d.$

\noindent\textit{Case $d = 1$}: Let, $\phi = - \left(\phi_1'\right)^4.$ For a given $\phi_1 \in \mathcal{H}_N$ we can choose a constant $ c > 0$ such that $\widetilde{\phi}_1 = \phi_1 + c > 0,$ and note that $\left(\phi_1'\right)^4 = ( ({\widetilde{\phi}_1})^{\prime})^4.$ Using Proposition~\ref{conjugate_limit} with $\varphi = \widetilde{\phi}_1$, and $p=0,$ we obtain
$$e^{\delta^{- 1/4} \widetilde{\phi}_1} \mathcal{R}_{\delta}\left( e^{-\delta^{-1/4} \widetilde{\phi}_1} u_0, 0\right) \to e^{ - \left(\phi_1'\right)^4} u_0 \ \text{ in } H^s, \ \text{ as } \delta \to 0^+.$$
Thus, there exists a $\tau_2 \in (0, T/3)$ such that 
\begin{align}\label{ineq_1}
\left\| e^{\tau_2^{- 1/4} \widetilde{\phi}_1} \mathcal{R}_{\tau_2}\left( e^{-\tau_2^{-1/4} \widetilde{\phi}_1} u_0, 0\right) - e^{ - \left(\phi_1'\right)^4} u_0 \right\|_s < \frac{\varepsilon}{2}.
\end{align}
Since, $\phi_1 \in \mathcal{H}_N, c \in \mathcal{H}_0,$ then $ (-\tau_2^{-1/4} \widetilde{\phi}_1) \in \mathcal{H}_N,$ then by induction hypothesis, for ant $T, \varepsilon_1 > 0,$ there $\tau_1 \in (0, T/3)$ and a piecewise constant control $p^{\tau_1} : [0, \tau_1] \to \mathbb{R}^3$ such that 
\begin{align}\label{ineq_2}
\left\|  \mathcal{R}_{\tau_1}( u_0, p^{\tau_1}) - e^{-\tau_2^{- 1/4} \widetilde{\phi}_1} u_0 \right\|_s < \varepsilon_1.
\end{align}
Since the solution $\mathcal R\left(e^{-\tau_2^{- 1/4} \widetilde{\phi}_1} u_0 ,0\right)$ of \eqref{ctrl_prblm} is well-defined in $[0,\tau_2]$ and by \eqref{ineq_2}, $\mathcal R_{\tau_1}\left(u_0, p^{\tau_1}\right)$ and $e^{-\tau_2^{- 1/4} \widetilde{\phi}_1} u_0$ are arbitrarily close, using \Cref{existence}--\Cref{point_2}, we can say that the solution $\mathcal R\left(\mathcal R_{\tau_1}\left(u_0, p^\tau_1\right), 0\right)$
 is well-defined in $[0,\tau_2].$ Thanks to \eqref{con}--\eqref{flow_prpty}, it follows that the solution $\mathcal R\left(u_0, p^{\tau_1}* 0|_{[0,\tau_2]}\right)$ is well-defined in $[0,\tau_1+\tau_2].$ Furthermore, using \Cref{existence}--\Cref{point_1} together with \eqref{ineq_2}, we obtain a positive constant $C_1(\tau_2)$ such that
 \begin{align}\label{ineq_6}
  \notag & \left\| \mathcal{R}_{\tau_1+\tau_2}\left( u_0, p^{\tau_1}*0|_{[0,\tau_2]}\right) -  \mathcal{R}_{\tau_2}\left( e^{-\tau_2^{-1/4} \widetilde{\phi}_1} u_0, 0\right)\right\|_s \\
    &=\left\| \mathcal{R}_{\tau_2}\left( \mathcal{R}_{\tau_1}\left(u_0, p^{\tau_1}\right),0\right) -  \mathcal{R}_{\tau_2}\left( e^{-\tau_2^{-1/4} \widetilde{\phi}_1} u_0, 0\right)\right\|_s<C_1\, \varepsilon_1.
 \end{align}
Let us denote $\widehat{u}_0 := \mathcal{R}_{\tau_2}( e^{-\tau_2^{-1/4} \widetilde{\phi}_1} u_0, 0) \in H^s(\T).$ Then again using the induction hypothesis, there exist $\tau_3 \in (0, T/3)$ and a piecewise constant control $p^{\tau_3} : [0, \tau_3] \to \mathbb{R}^3$ such that 
\begin{align}\label{ineq_3}
\left\|  \mathcal{R}_{\tau_3}( \widehat{u}_0, p^{\tau_3}) - {e^{\tau_2^{-1/4} \widetilde{\phi}_1}} \widehat{u}_0 \right\|_s < \varepsilon_1.
\end{align}	
A similar argument as above leads to the existence of the solution $\mathcal R\left( \mathcal{R}_{\tau_1 + \tau_2}\left(u_0, p^{\tau_1} * 0|_{[0, \tau_2]}\right), p^{\tau_3}\right)$ of \eqref{ctrl_prblm} is well-defined in $[0,\tau_3]$. Which further simplifies that the solution $\mathcal R\left( u_0, p^{\tau_1} * 0|_{[0, \tau_2]}*p^{\tau_3}\right)$ of \eqref{ctrl_prblm} is well-defined in $[0,\tau_1+\tau_2+\tau_3].$
Thus, using stability property \eqref{stability}, flow property \eqref{flow_prpty} and combined with the \eqref{ineq_6} and \eqref{ineq_3} we deduce a constant $C_2(\tau_3, \norm{p^{\tau_3}})$ such that 
\begin{align*}
&\left\|  \mathcal{R}_{\tau_1 + \tau_2 + \tau_3}\left( u_0, p^{\tau_1} * 0|_{[0, \tau_2]} * p^{\tau_3} \right) - e^{ - \left(\phi_1'\right)^4} u_0 \right\|_s \\
&\le \left\|\mathcal{R}_{\tau_3} \left(\mathcal{R}_{\tau_1 + \tau_2}\left(u_0, p^{\tau_1} * 0|_{[0, \tau_2]}\right) , p^{\tau_3}\right) - \mathcal{R}_{\tau_3}( \mathcal{R}_{\tau_2}\left( e^{-\tau_2^{-1/4} \widetilde{\phi}_1} u_0, 0\right) , p^{\tau_3})\right\|_s\\
& \hspace{2cm}+ \left\| \mathcal{R}_{\tau_3}\left( \mathcal{R}_{\tau_2}\left( e^{-\tau_2^{-1/4} \widetilde{\phi}_1} u_0, 0\right) , p^{\tau_3}\right) - e^{\tau_2^{- 1/4} \widetilde{\phi}_1} \mathcal{R}_{\tau_2}\left( e^{-\tau_2^{-1/4} \widetilde{\phi}_1} u_0, 0\right)\right\|_s \\
& \hspace{4cm}+ \left\| e^{\tau_2^{- 1/4} \widetilde{\phi}_1} \mathcal{R}_{\tau_2}\left( e^{-\tau_2^{-1/4} \widetilde{\phi}_1} u_0, 0\right) - e^{ - \left(\phi_1'\right)^4} u_0 \right\|_s\\
& \le C_2 \left\| \mathcal{R}_{\tau_1 + \tau_2}\left(u_0, p^{\tau_1} * 0|_{[0, \tau_2]}\right) - \mathcal{R}_{\tau_2}\left( e^{-\tau_2^{-1/4} \widetilde{\phi}_1} u_0, 0\right)\right\|_s + \varepsilon_1 + \frac{\varepsilon}{2}\\
&\le C_1 C_2 \varepsilon_1 + \varepsilon_1 + \frac{\varepsilon}{2}.
\end{align*}	
 We can choose $\varepsilon_1 > 0,$ small enough such that $C_1 C_2 \varepsilon_1 + \varepsilon_1 < \varepsilon/2.$	Therefore, we have proved that for any $\varepsilon, T > 0,$ there exists a time $\tau := \tau_1 + \tau_2 + \tau_3 \in (0, T)$ and a piecewise constant control $\overline{p} :=  p^{\tau_1} * 0|_{[0, \tau_2]} * p^{\tau_3} : [0, \tau] \to \mathbb{R}^3,$ such that 
\begin{align*}
\left\| \mathcal{R}_{\tau} (u_0, \overline{p}) - e^{ - \left(\phi_1'\right)^4} u_0 \right\|_s < \varepsilon.
\end{align*}
This completes the case for $d = 1.$

\noindent\textit{Case $d > 1$}: Assume the result holds for $d - 1.$ Let $\overline{\phi} = - \displaystyle\sum_{k = 1}^{d - 1} \left(\phi_k'\right)^4,$ where $\phi_1, \dots, \phi_d \in \mathcal{H}_N,$ then by induction hypothesis, for any $T, \varepsilon_2 > 0,$ there exists $\overline{\tau}_1 \in (0, T/3)$ and a piecewise constant control $\overline{p}_1 : [0, \overline{\tau}_1] \to \mathbb{R}^3.$ such that 
\begin{align}\label{ineq_4}
\left\| \mathcal{R}_{\overline{\tau}_1} \left(u_0, \overline{p}_1\right) - e^{\overline{\phi}} u_0 \right\|_s < \varepsilon_2.
\end{align}
Define $\overline{u}_0 := e^{\overline{\phi}} u_0.$ Using the case for $d = 1,$ with $\overline{u}_0$ there exists $\overline{\tau}_2 \in (0, T/3)$ and a piecewise constant control $\overline{p}_2 : [0, \overline{\tau}_2] \to \mathbb{R}^3.$ such that 
\begin{align}\label{ineq_5}
	\left\| \mathcal{R}_{\overline{\tau}_2} (\overline{u}_0, \overline{p}_2) - e^{- \left(\phi_d'\right)^4} \overline{u}_0 \right\|_s <\frac{\varepsilon}{2}.
\end{align}
Using stability property \eqref{stability}, flow property \eqref{flow_prpty}, together with  \eqref{ineq_4} and \eqref{ineq_5}, we have 
\begin{align*}
&\left\|\mathcal{R}_{\overline{\tau}_1 + \overline{\tau}_2}\left(u_0, \overline{p}_1 * \overline{p}_2\right) - e^{- \left(\phi_d'\right)^4} \overline{u}_0 \right\|_s \\
& \le \left\| \mathcal{R}_{\overline{\tau}_2}\left( \mathcal{R}_{\overline{\tau}_1}\left(u_0, \overline{p}_1\right), \overline{p}_2\right) - \mathcal{R}_{\overline{\tau}_2}\left( \overline{u}_0, \overline{p}_2\right) \right\|_s + \left\| \mathcal{R}_{\overline{\tau}_2} \left( \overline{u}_0, \overline{p}_2\right) - e^{- \left(\phi_d'\right)^4} \overline{u}_0 \right\|_s\\
& \le C_3 \varepsilon_2 + \frac{\varepsilon}{2},
\end{align*}
where the existence of $C_3=C_3(\overline \tau_2, \norm{\overline p_2})>0$ is given by \eqref{stability}.
We can choose $\varepsilon_2 > 0,$ small enough such that $C_3 \varepsilon_2 < \varepsilon/2.$ Therefore, we have proved that for any $\varepsilon, T > 0,$ there exist a time $\overline{\tau} := \overline{\tau}_1 + \overline{\tau}_2 \in (0, 2T/3)$ and a piecewise constant control $\widehat{p} :=  \overline{p}_1 * \overline{p}_2 : [0, \overline{\tau}] \to \mathbb{R}^3,$ such that 
\begin{align}\label{eq_est22}
	\left\| \mathcal{R}_{\overline{\tau}} \left(u_0, \widehat{p}\right) - e^{ (\phi - \phi_0)} u_0 \right\|_s < \varepsilon.
\end{align}
This completes the case for $d.$

Finally, in order to conclude the proof, let us denote $\widetilde{u}_0=e^{\phi-\phi_0}u_0\in H^s(\T).$ As $\phi_0\in \mathcal{H}_N,$ by induction hypothesis, there exist $\widetilde \tau \in (0,T/3)$ and a piecewise constant control $\widetilde p_3:[0,\widetilde \tau]\to \mathbb{R}^3$ such that 
\begin{align}\label{eq_est21}
	\left\| \mathcal{R}_{\widetilde{\tau}} \left(\widetilde u_0, \widetilde{p}\right) -  e^{\phi_0}e^{(\phi - \phi_0)} u_0 \right\|_s < \varepsilon.
\end{align}
Combining \eqref{eq_est22} and \eqref{eq_est21}, and defining the required piecewise control $p:= \widehat p \,  * \widetilde p$ over the time $[0,\overline \tau+\widetilde \tau]\subset[0,T),$
one can steer the state $\mathcal R_{\tau}(u_0,p)$ to arbitrarily close to $ e^{\phi} u_0$ at time $\tau\in (0,T)$. This completes the proof of property $(P_N).$ 
\end{proof}
We are now in a position to apply \Cref{prpn_exp_pertubed_initial} to prove \Cref{main_thm_approx}.
\subsection{Proof of \Cref{main_thm_approx}}
\begin{proof}
{\Cref{point_11}.}
Assume that $u_0, u_1 \in H^{s}(\mathbb{T})$ and $\operatorname{sign}(u_0) = \operatorname{sign}(u_1)$. 
 We define $\mathcal Z$ as the closed set in which both $u_0$ and $u_1$ vanish:
\begin{align*}
    \mc Z := u_0^{-1}(\{0\}) = \ u_1^{-1}(\{0\}) .
\end{align*}
Consider for $\theta > 0$ the set
\begin{align*}
    \mc Z_{\theta} := \{ x \in \mathbb{T} : \operatorname{dist}(x, \mc Z) < \theta \},
\end{align*}
and its complement in $\mathbb{T}$, denoted by $\mc Z_{\theta}^c$.  
For $\theta > 0$, we define
\begin{align*}
    \phi_{\theta} = \chi_{\mc{Z}_{\theta}^c} \log\!\left(\frac{u_1}{u_0}\right),
\end{align*}
where $\chi_{\mc{Z}_{\theta}^c}$ is the indicator function of the set $\mc Z_{\theta}^c$.
The function $\phi_{\theta}$ is well defined because $u_1/u_0 > 0$ on $\mc Z_{\theta}^c$.
Furthermore, $\phi_{\theta} \in L^{\infty}(\mathbb{T})$. Notice that
\begin{align}\label{eq_rmk}
    \left\|e^{\phi_{\theta}} u_0 - u_1\right\|_{L^2(\mathbb{T})}
    \le 
    \left\|e^{\phi_{\theta}} u_0 - u_1\right\|_{L^2(\mc Z_{\theta}^c)}
    + \left\|u_0 - u_1\right\|_{L^2(\mc Z_{\theta}\setminus \mc Z)}.
\end{align}
Fix any $\varepsilon, T > 0$.  
We can choose $\theta > 0$ small enough so that
\begin{align*}
    \left\|e^{\phi_{\theta}} u_0 - u_1\right\|_{L^2(\mathbb{T})} < \frac{\varepsilon}{3}.
\end{align*}
Using density, there exists a $\widetilde\phi_{\theta}\in H^s(\T)$ such that
\begin{align}\label{est21}
    \left\|e^{\widetilde\phi_{\theta}} u_0 - u_1\right\|_{L^2(\mathbb{T})} \leq \left\|e^{\widetilde\phi_{\theta}} u_0 -e^{\phi_{\theta}} u_0\right\|_{L^2(\mathbb{T})}+\left\| e^{\phi_{\theta}} u_0-u_1\right\|_{L^2(\mathbb{T})}<  \frac{2\varepsilon}{3}.
\end{align}
We then apply \Cref{prpn_exp_pertubed_initial} with $\phi = \widetilde\phi_{\theta}$ and deduce that
there exist a time $\tau \in [0, T)$ and a control $p\in L^2(0,\tau;\mathbb{R}^3)$
such that the solution $\mc R( u_0, p)$ of \eqref{ctrl_prblm} is well defined in $[0,\tau]$
and satisfies
\begin{align}\label{est12}
    \left\|\mc R_{\tau}(u_0, p) - e^{\widetilde\phi_{\theta}} u_0\right\|_{L^2(\T)}
    < \frac{\varepsilon}{3}.
\end{align}
Applying the triangle inequality, from \eqref{est21} and \eqref{est12} we conclude that
\begin{align*}
    \left\|\mc R_{\tau}(u_0, p) - u_1\right\|_{L^2(\T)}
    \le 
    \left\|\mc R_{\tau}(u_0, p) - e^{\widetilde\phi_{\theta}} u_0\right\|_{L^2(\T)}
    + \left\|e^{\widetilde\phi_{\theta}} u_0 - u_1\right\|_{L^2(\T)}
    < \varepsilon.
\end{align*}

\medskip
\paragraph{\Cref{point_12}}
In this case, our aim is to show approximate controllability in $H^s(\T)$ norm. Fix $\varepsilon, \ T > 0.$ Here we replace $\phi_{\theta}$ with two choices, $$\phi_1 = \log \left( \frac{\operatorname{sign} (u_0)}{u_0} \right) \ \text{ and } \phi_2 = \log \left( \operatorname{sign} (u_1)u_1 \right),$$ which are well-defined everywhere in $\T.$ Since $u_0, \ u_1$ are in $H^s(\T),$ then both $\phi_1, \ \phi_2 \in H^s(\T).$ Without loss of generality, we assume that $u_0, u_1>0.$  Applying Proposition~\ref{prpn_exp_pertubed_initial} with $\phi = \phi_1$, for any $\varepsilon'>0,$ we obtain a time 
$\tau_1 \in (0, T/2]$ and a control $p^1 : [0, \tau_1) \to \R^3$ such that 
\begin{equation}\label{eq_1st}
\bigl\| \mathcal{R}_{\tau_1}\left(u_0, p^1\right) - 1 \bigr\|_{s} < \frac{\varepsilon'}{3}.
\end{equation}
Similarly, applying Proposition~\ref{prpn_exp_pertubed_initial} with $\phi = \phi_2$, we find a time 
$\tau_2 \in (0, T/2]$ and a control $p^2 : [0, \tau_2) \to \R^3$ satisfying 
\begin{equation}\label{eq_3rd}
\bigl\| \mathcal{R}_{\tau_2}\left(1, p^2\right) - u_1 \bigr\|_{s} < \frac{\varepsilon}{3}.
\end{equation}
Next, note that $1$ is a stationary solution of \eqref{ctrl_prblm} under the control 
$p^{0} : [0,\, T - \tau_1 - \tau_2] \to \mathbb{R}^3$ defined below.  
For the nonlinearities $\mathcal{N}_{KS}$ or $\mathcal{N}_{CH}$ in \eqref{ctrl_prblm}, 
one can consider 
\[
p^{0}(t) = (0,0,0), \qquad \forall\, t \in [0,\, T - \tau_1 - \tau_2].
\]
Let us define the control $p^1 * p^{0} * p^2$, which steers the solution of \eqref{ctrl_prblm} from $u_0$ to a state arbitrarily close to $u_1$ in the $H^s$–norm at time $T$.
Indeed, thanks to \eqref{con} and \eqref{flow_prpty} together with \eqref{eq_1st} and \eqref{eq_3rd}, we deduce
\begin{align*}
 \norm{\mathcal{R}_T\left(u_0, p^1 * p^{0} * p^2\right)-u_1}_{s} &\leq \norm{\mathcal{R}_{\tau_2}\left(\mathcal{R}_{T-\tau_2}(u_0, p^1 * p^{0}), p^2\right)-\mathcal{R}_{\tau_2}\left(1, p^{2}\right)}_{s}\\
&\qquad\qquad+\norm{\mathcal{R}_{\tau_2}\left(1,  p^{2}\right)-u_1}_{s}\\
&\leq C \norm{\mathcal{R}_{T-\tau_2}(u_0, p^1 * p^{0})-1}_{s}+\frac{\varepsilon}{3}\\
&\leq C\norm{\mathcal{R}_{T-\tau_1-\tau_2}\left(\mathcal{R}_{\tau_1}(u_0,p^1), p^{0}\right)-\mathcal{R}_{T-\tau_1-\tau_2}(1,p^0)}_{s}\\
&\qquad\qquad+C\norm{\mathcal{R}_{T-\tau_1-\tau_2}(1,p^0)-1}_{s}+\frac{\varepsilon}{3}\\
&\leq CC' \varepsilon'+\frac{\varepsilon}{3},
\end{align*}
where the existence of $C\left(\tau_2,\norm{p^2}\right), C'\left(T-\tau_1-\tau_2, \norm{p^0}\right)>0$ are given by \eqref{stability}. 
Choosing $\varepsilon'>0$ sufficiently small so that $CC' \varepsilon'+\frac{\varepsilon}{3}<\varepsilon$, we complete the proof.
\end{proof}
\begin{remark}
   The fact that the approximate controllability result in \Cref{main_thm_approx}--\Cref{point_11} is stated only in the $L^2$ setting, while \Cref{main_thm_approx}--\Cref{point_12} is formulated in the stronger $H^s$-topology, is a consequence of the approximation procedure used in the proof. Indeed, the quantity
$
\|e^{\phi_\theta}u_0-u_1\|_{H^s(\mc Z^c_\theta)},
$
which arises in the above argument (see inequality \ref{eq_rmk}), cannot be made arbitrarily small as $\theta \to 0$ whenever $s>0$ and $\mc Z \neq \emptyset$.  Moreover, in this case, extending the small-time approximate controllability result to arbitrary times is not addressed by the present approach. This is due to the fact that our analysis relies on sign conditions for the initial and terminal data. Consequently, the usual strategy of steering the system sufficiently close to a desired target in a short time and then maintaining the trajectory in a neighbourhood of that target for a sufficiently long time by means of a suitable control cannot be implemented here. As a result, the issue of $H^s$-approximate controllability in arbitrary time
$T>0$ under the assumptions of \Cref{point_11} remains an open and interesting
problem.
\end{remark}

\section{Proof of the conjugated dynamics limit and semi-global stability}\label{section_propn_proofs}
In this section, we prove Proposition~\ref{conjugate_limit} and establish the semi-global stability property \eqref{stability}. To this end, we collect several inequalities that will be used throughout this section.

\begin{lemma}(See \cite{Adams_Fournier_2003})\label{lemma_interpolation}
	The Sobolev space $H^{s}(\mathbb{T})$ satisfies the following properties:
	\begin{enumerate}[(i)]
		\item For any $s>\tfrac12$ and $u\in H^{s}(\mathbb{T})$, we have a constant $C>0$ such that
		\begin{equation}\label{L_infty_Hs}
			\|u\|_{L^\infty} \le C\,\|u\|_{s}.
		\end{equation}
		
		\item For $0\le s_{1}\le s_{2}$ and any $\varepsilon>0$, there exists $C(\varepsilon)>0$ such that for every $u\in H^{s_{2}}(\mathbb{T})$,
		\begin{equation}\label{interpolation_inequality}
			\|u\|_{s_{1}}
			\le \varepsilon\,\|\partial_x^{s_{2}}u\| + C(\varepsilon)\,\|u\|.
		\end{equation}
	\end{enumerate}
\end{lemma}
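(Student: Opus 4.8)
The plan is to prove both inequalities directly from the Fourier characterisation of $H^s(\mathbb{T})$ recalled in \Cref{sec_function}, so that the entire argument reduces to elementary estimates on the coefficients $\widehat u_k$.

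For part (i), I would start from the expansion $u(x)=\sum_{k\in\mathbb{Z}}\widehat u_k e^{ikx}$ and bound the sup-norm pointwise by the series $\sum_k|\widehat u_k|$. Inserting the weight $(1+|k|^2)^{s/2}$ together with its reciprocal and applying Cauchy--Schwarz gives
$$\|u\|_{L^\infty}\le\Big(\sum_{k\in\mathbb{Z}}(1+|k|^2)^{-s}\Big)^{1/2}\Big(\sum_{k\in\mathbb{Z}}(1+|k|^2)^{s}|\widehat u_k|^2\Big)^{1/2}=C_s\,\|u\|_{s}.$$
The only thing to verify is that the constant $C_s$ is finite, and this is precisely where the hypothesis enters: in dimension one the sum $\sum_{k}(1+|k|^2)^{-s}$ converges if and only if $2s>1$, i.e. $s>\tfrac12$. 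This sharp threshold is the real content of (i); everything else is Cauchy--Schwarz.

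For part (ii), I would first record that $\|u\|_{s_1}^2=\sum_k(1+|k|^2)^{s_1}|\widehat u_k|^2$ and $\|\partial_x^{s_2}u\|^2=\sum_k|k|^{2s_2}|\widehat u_k|^2$, and reduce the claim to the scalar mode-wise bound
$$(1+|k|^2)^{s_1}\le \varepsilon^2\,|k|^{2s_2}+C(\varepsilon)^2,\qquad k\in\mathbb{Z}.$$
Summing this against $|\widehat u_k|^2$ and using $\sqrt{a+b}\le\sqrt a+\sqrt b$ yields the stated estimate. The case $s_1=s_2$ is immediate, so assume $s_1<s_2$. The bound is trivial for $k=0$ (once $C(\varepsilon)\ge1$), while for $|k|\ge1$ I would use $(1+|k|^2)^{s_1}\le 2^{s_1}|k|^{2s_1}$ together with the elementary consequence of Young's inequality that, for any $\alpha=s_1/s_2\in(0,1)$ and any $\eta>0$, one has $x^{\alpha}\le \eta\, x+C(\eta,\alpha)$ for all $x\ge0$. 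Applying this with $x=|k|^{2s_2}$ controls $|k|^{2s_1}$ by $\eta|k|^{2s_2}$ plus a constant, and choosing $\eta$ small enough (depending on $\varepsilon$ and $2^{s_1}$) closes the estimate.

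I do not anticipate a genuine obstacle, since both statements are classical Sobolev facts, which is why the reference to Adams--Fournier is appropriate; the only points requiring care are the sharp summability threshold $2s>1$ in (i) and, in (ii), the clean reduction to a single-frequency inequality along with the trivial edge case $s_1=s_2$. An alternative route for (ii) would be to invoke the multiplicative interpolation inequality $\|u\|_{s_1}\le C\,\|u\|_{s_2}^{\theta}\|u\|^{1-\theta}$ with $\theta=s_1/s_2$, combined with $\|u\|_{s_2}\le C(\|u\|+\|\partial_x^{s_2}u\|)$ and Young's inequality in the form $ab\le \eta\,a^{1/\theta}+C(\eta)\,b^{1/(1-\theta)}$; however, the mode-wise argument above seems the most self-contained.
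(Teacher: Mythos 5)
Your proof is self-contained and, in substance, the classical one; the paper gives no argument for \Cref{lemma_interpolation} beyond the citation to Adams--Fournier, so the Fourier-side route you follow is exactly what one would supply. Part (i) is complete and correct: Cauchy--Schwarz with the weights $(1+|k|^2)^{\pm s/2}$ reduces everything to the convergence of $\sum_{k}(1+|k|^2)^{-s}$, which holds precisely when $2s>1$. For part (ii) with $s_1<s_2$, your mode-wise reduction, the bound $(1+|k|^2)^{s_1}\le 2^{s_1}|k|^{2s_1}$ for $|k|\ge 1$, and Young's inequality in the form $x^{s_1/s_2}\le \eta x+C(\eta)$ are all correct, as is the final passage via $\sqrt{a+b}\le\sqrt a+\sqrt b$.

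The genuine flaw is the sentence ``the case $s_1=s_2$ is immediate.'' That case is not immediate; it is false whenever $s_1=s_2>0$. Indeed, take $u=e^{ikx}$: then
\begin{equation*}
\|u\|_{s_1}=(1+k^2)^{s_1/2}\ge |k|^{s_1},\qquad \|\partial_x^{s_2}u\|=|k|^{s_1},\qquad \|u\|=1,
\end{equation*}
so \eqref{interpolation_inequality} would force $(1-\varepsilon)\,|k|^{s_1}\le C(\varepsilon)$ for every $k$, which fails as $|k|\to\infty$ once $\varepsilon<1$. When $s_1=s_2>0$, the best available statement is the norm equivalence $\|u\|_{s}\le C\left(\|u\|+\|\partial_x^{s}u\|\right)$ with a fixed constant $C>1$, as recorded in \Cref{sec_function}, not an $\varepsilon$-version. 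So the restriction $s_1<s_2$ (or the trivial case $s_1=s_2=0$) is genuinely needed: this is in fact a defect of the lemma as stated (with $0\le s_1\le s_2$), and your proof should exclude the boundary case explicitly rather than assert it. The slip is harmless for the rest of the paper, since every application of \eqref{interpolation_inequality} (for instance in \eqref{1st_trm_N0} and \eqref{est2}) involves a strict gap between the two orders.
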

\begin{lemma}\label{lemma_Young}(Young's inequality)
Let $a, b \in [0, \infty),$ and $\varepsilon > 0, $ then we have
\begin{align}\label{Young_ineq}
ab \le \varepsilon^{-p} \frac{a^p}{p} + \varepsilon^{q} \frac{b^q}{q},
\end{align}
where $1 < p < \infty, \ \frac{1}{p} + \frac{1}{q} = 1.$
\end{lemma}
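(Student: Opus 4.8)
The plan is to reduce the weighted inequality to the classical (unweighted) Young inequality by absorbing the parameter $\varepsilon$ into the two factors. Recall that the classical statement asserts that for nonnegative reals $\alpha,\beta$ and conjugate exponents $p,q$ with $\tfrac1p+\tfrac1q=1$ one has $\alpha\beta\le \tfrac{\alpha^p}{p}+\tfrac{\beta^q}{q}$. Granting this for the moment, I would simply apply it to the pair $\alpha=\varepsilon^{-1}a$ and $\beta=\varepsilon b$. Since $\alpha\beta=(\varepsilon^{-1}a)(\varepsilon b)=ab$, while $\alpha^p=\varepsilon^{-p}a^p$ and $\beta^q=\varepsilon^{q}b^q$, substituting into the classical bound yields exactly $ab\le \varepsilon^{-p}\tfrac{a^p}{p}+\varepsilon^{q}\tfrac{b^q}{q}$, which is the claim \eqref{Young_ineq}. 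The substitution is legitimate on the entire range $a,b\in[0,\infty)$ because $\varepsilon>0$ guarantees that $\alpha,\beta$ remain nonnegative.

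To keep the argument self-contained I would first establish the unweighted inequality. The cleanest route is through the concavity of the logarithm (equivalently, the weighted arithmetic–geometric mean inequality). After discarding the trivial cases $a=0$ or $b=0$, in which the left-hand side vanishes and the inequality is immediate, I assume $a,b>0$ and write $ab=\exp\!\bigl(\tfrac1p\log a^p+\tfrac1q\log b^q\bigr)$. Because $\tfrac1p+\tfrac1q=1$, the exponent is a convex combination of $\log a^p$ and $\log b^q$, so the convexity of $t\mapsto e^{t}$ (equivalently, Jensen's inequality applied to $\exp$) gives $ab\le \tfrac1p a^p+\tfrac1q b^q$. An equally valid alternative is the one-variable calculus argument: fixing $b$ and minimizing $t\mapsto \tfrac{t^p}{p}+\tfrac{b^q}{q}-tb$ over $t\ge 0$ shows the minimum is nonnegative, which is the same inequality.

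There is no genuine obstacle here, since \eqref{Young_ineq} is a standard rescaling of a classical estimate. The only points deserving minor care are the boundary cases $a=0$ or $b=0$, handled trivially as above, and the observation that the rescaling $\alpha=\varepsilon^{-1}a$, $\beta=\varepsilon b$ is admissible for every $\varepsilon>0$ so that the reduction step incurs no loss of generality. Consequently the proof is essentially a two-line derivation once the classical Young inequality is invoked.
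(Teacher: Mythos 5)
Your proof is correct. Note that the paper itself offers no proof of this lemma: it is stated as a classical fact (standard Young's inequality with a parameter) and used as a tool elsewhere, so there is no argument to compare against. Your derivation — reducing to the unweighted inequality via the rescaling $\alpha=\varepsilon^{-1}a$, $\beta=\varepsilon b$, and then proving the unweighted case from the convexity of $t\mapsto e^{t}$ with the boundary cases $a=0$ or $b=0$ handled separately — is the standard argument, and every step checks out: indeed $\alpha\beta=ab$, $\alpha^{p}=\varepsilon^{-p}a^{p}$, $\beta^{q}=\varepsilon^{q}b^{q}$, so the classical bound transforms into exactly \eqref{Young_ineq}.
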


\subsection{Proof of conjugated dynamics limit }\label{sec_asymp}
In this section, we prove \Cref{conjugate_limit} for the Kuramoto--Sivashinsky equation \eqref{ctrl_prblm}, \eqref{ks}. 
The corresponding modifications for the Cahn--Hilliard equation are discussed in \Cref{rmk_ch}.
\begin{proof} [\bf Proof of \Cref{conjugate_limit}]	For ease of reading, we split the proof into several steps. 

	\smallskip
	\textbf{Step 1. Formulation.} 
To simplify the presentation, we assume throughout the proof that $\delta\in(0,1)$. By definition, $$ u := \mathcal{R}\left( e^{-\delta^{-\frac{1}{4}} \varphi} u_0, \delta^{-1} p\right)$$ is the solution of 
\begin{align*}
\begin{cases}
\partial_t u + \partial_{x}^4 u +  \partial_{x}^2 u + \mc{N}(u) = \delta^{-1} \langle p, \mu \rangle u,& t>0 \, , \, x \in  \mathbb{T},\\
u(0, x) = e^{-\delta^{-\frac{1}{4}} \varphi} u_0(x), & x \in \mathbb{T}.
\end{cases}
\end{align*}
Let us denote $$\Psi(t) := e^{\delta^{- \frac{1}{4}} \varphi} u(t, x).$$ Then according to Proposition \ref{existence}, $\Psi(t)$ is well-defined up to maximal time $T_*^{\delta} = T_*(e^{-\delta^{-\frac{1}{4}} \varphi} u_0, \delta^{-1} p) > 0.$
Next, we consider the operator:
$$( - \partial_{x}^4) : H^{s + 4}(\mathbb{T}) \to H^s (\mathbb{T}), \quad u \mapsto  - \partial_x^4 u .$$
It is easy to check that $- \partial_{x}^4$ is the infinitesimal generator of the strongly continuous semigroup $\{e^{t (-\partial_x^4)}\}_{t \ge 0}$. Moreover, it has the following expression
\begin{align}\label{expr}
  e^{t (-\partial_x^4)} u_0=\sum_{k\in \mathbb{Z}}u_{0,k} e^{-k^4 t} e^{ikx}, 
\end{align}
where $u_{0,k}$ are the Fourier coefficient for $u_0.$
We introduce the following functions
\begin{align}\label{defn_w_v}
w(t) = e^{\left(-(\varphi')^4 + \langle p, \mu \rangle\right) t} u_0^{\delta}, \hspace{2cm} v(t) = \Psi(\delta t) - w(t),
\end{align}
where $u_0^{\delta} := e^{ \delta^{1/8} (-\partial_x^4)} u_0 \in H^{s+4} (\mathbb{T}),$ such that
\begin{align}\label{u_0_delta_goesto_u_0}
\| u_0 - u_0^{\delta} \|_s \to 0, \quad \text{ as } \delta \to 0^+.
\end{align}
Thanks to \eqref{expr}, let us calculate the $H^s$ norm of $u_0^{\delta}$ as follows
\begin{align*}
   & \norm{u_0^\delta}^2_{s}=\sum_{k\in \mathbb{Z}}(1+|k|^2)^s |u_{0,k}|^2 e^{-2 k^4 \delta^{1/8}}\leq \norm{u_0}^2_s,\\
   & \norm{u_0^\delta}^2_{s+4}=\sum_{k\in \mathbb{Z}}(1+|k|^2)^{s+4}  |u_{0,k}|^2 e^{-2 k^4 \delta^{1/8}}\leq  \frac{C}{\delta^{1/4}}\norm{u_0}^2_s,
\end{align*}
which further simplifies that, there exists $C > 0$ independent of $\delta > 0,$ such that
\begin{align}\label{Hs_and_Hr_nrm_u_0_delta}
&\| u_0 ^{\delta} \|_s \le C, \quad \|u_0 ^{\delta} \|_{s+4} \le C \delta^{-1/8}.
\end{align}
Our aim is to show $\Psi(\delta) \xrightarrow{\delta \to 0^+} e^{(-(\varphi')^4 + \langle p, \mu \rangle) } u_0$ in $H^s(\T).$ Thanks to the definition \eqref{defn_w_v}, it is sufficient to prove that
\begin{align*}
\| v(1) \|_{s}\xrightarrow{\delta \to 0^+}  0.
\end{align*}
However, before proving that, we need to ensure the existence of $\delta_0 > 0$ small enough such that, for every $0 < \delta < \delta_0,$ $v(t)$ is well-defined in $[0, 1],$ that is  
\begin{align}\label{T_delta_ge1}
\delta^{-1} T^{\delta}_* \ge 1.
\end{align}
Let us take $t<\min\{2, \delta^{-1} T^{\delta}_*\}$. Observe that $v$ satisfies the following equation
\begin{equation}\label{equn_v_t}
\begin{cases}
\partial_t v  + \delta \partial_{x}^4 v & =  - \delta\partial_{x}^4w- \delta  \partial_{x}^2 (v + w) + 4 \delta^{3/4} \varphi' \partial_{x}^3 (v + w)  -  \delta e^{\delta^{-\frac{1}{4}} \varphi} \mathcal{N}\Big(e^{-\delta^{-\frac{1}{4}} \varphi} (v + w)\Big)   \\
&\qquad \qquad + F_1 \partial_{x}^2 (v + w)  + F_2 \partial_{x}(v + w) + F_3 (v + w) + \langle p, \mu \rangle v - (\varphi')^4 v ,
\end{cases}
\end{equation}
	with initial condition 
\begin{align}\label{initial_v}
		v(0) = u_0 - u_0^{\delta},
\end{align}
where 
\begin{align}
&F_1 := \Big( 6 \delta^{3/4} \varphi'' - 6 \delta^{1/2} (\varphi')^2 \Big)  \label{exp_F1},\\
&F_2 := \Big( 4\delta^{3/4} \varphi''' - 12 \delta^{1/2} \varphi' \varphi'' + 4 \delta^{1/4} (\varphi')^3 + 2 \delta^{3/4}  \varphi' \Big), \label{exp_F2} \\
\label{exp_f3} &F_3 := \Big( \delta^{3/4} \varphi'''' - 3 \delta^{1/2} (\varphi'')^2 -4\delta^{1/2}\varphi'\varphi''' + 6 \delta^{1/4} (\varphi')^2 (\varphi'') + \delta^{3/4}    \varphi'' - \delta^{1/2}    (\varphi')^2  \Big), 
\end{align}
and
\begin{align}
& \mathcal{N}_{KS} \Big(e^{-\delta^{-\frac{1}{4}} \varphi} (v + w)\Big) = e^{-2\delta^{-\frac{1}{4}} \varphi} \Big( (v + w) \partial_{x}(v + w) - \delta^{-1/4} \varphi' (v + w)^2 \Big). \label{ks_non_in_vw}
\end{align}
Thanks to \eqref{defn_w_v} and \eqref{Hs_and_Hr_nrm_u_0_delta}, there exists a constant $C >0$ such that, for all $t \in [0, 2],$
\begin{align}\label{Hs_and_Hr_nrm_w(t)}
	\| w(t) \|_s \le C, \quad \| w(t) \|_{s+4} \le C \delta^{-1/8}.
\end{align}
The regularity of $\varphi$, together with the assumption $\delta\in(0,1)$ and the above definitions \eqref{exp_F1}--\eqref{exp_f3}, yields that
\begin{align}\label{Fi}
\| F_1\|_{s} \le C \delta^{1/2}, \quad \| F_2\|_{s + 1} \le C \delta^{1/4}, \quad \| F_3\|_{s + 1} \le C \delta^{1/4}.
\end{align}

\smallskip
\textbf{Step 2. $L^2$-energy type estimate.}
Let us assume that $u_0 \in H^{2s + 2}(\mathbb{T})$ which implies $u(t) \in H^{2s + 2}(\mathbb{T})$ and therefore $v(t)\in H^{2s + 2}(\mathbb{T})$ for every $t\in (0, \delta^{-1} T^{\delta}_*).$ Taking the $L^2$-inner product of equation \eqref{equn_v_t} with $v$, and applying Young’s inequality together with \eqref{Hs_and_Hr_nrm_w(t)} and \eqref{Fi}, for sufficiently small $\varepsilon>0$, we obtain a constant $C>0$ independent of $\delta$ such that
\begin{align}\label{L^2_energy_v}
\frac{1}{2}\frac{d}{dt} \| v\|^2 + \delta \| \partial_x^2 v\|^2 & \le \delta \| w\|_2 \| \partial_{x}^2 v\| + \delta    \| v\| \| \partial_{x}^2 v\| + \delta    \|w\|_2 \|v\| + 4 \delta^{3/4} \| \varphi'\|_{L^{\infty}} \| w\|_3 \| v\| + \| F_1\|_{L^{\infty}} \| v\| \| \partial_x^{2} v\| \notag \\
& \qquad \qquad+ \| F_1\|_{L^{\infty}} \| w\|_2 \|v\| + C \| \partial_x F_2 \|_{L^{\infty}} \| v\|^2 + \|F_2\|_{L^{\infty}} \|w\|_1 \|v\| +  \|F_3\|_{L^{\infty}} \|v\|^2   \notag \\
& \notag\qquad \qquad \qquad +  \|F_3\|_{L^{\infty}} \| w\| \|v\| + \mc I (\varphi, v, w, \mu, p)\notag \\
& \notag \le C \delta^{7/8}  \| \partial_{x}^2 v\| + \delta    \| v\| \| \partial_{x}^2 v\| + C \delta^{7/8}  \| v\| + C \delta^{5/8} \| v\| + C \delta^{1/2} \| v\| \| \partial_x^{2} v\| + C \delta^{1/8} \| v\| 
 \\
 & \notag   \qquad \qquad \qquad+C \delta^{1/4} \| v\|^2+ \mathcal I (\varphi, v, w, \mu, p)\\
& \leq \varepsilon \delta \| \partial_{x}^2 v\| + C \delta^{1/8} + C (1 + \delta^{1/8}) \| v\|^2 + \mc I (\varphi, v, w, \mu, p),
\end{align}
where 
\begin{align}\label{exp_N_for_s0}
\mc I (\varphi, v, w, \mu, p) := & \ 4 \delta^{3/4} \left\langle \varphi' \partial_x^3v,\ v  \right\rangle_{L^2} - \left\langle e^{-\delta^{-\frac{1}{4}} \varphi} \Big( \delta (v + w) \partial_x(v + w) - \delta^{3/4} \varphi' (v + w)^2 \Big), v \right\rangle_{L^2} \notag \\
&  + \left\langle \Big( \langle p, \mu \rangle v - (\varphi')^4v \Big), \ v \right\rangle_{L^2} .
\end{align}
Considering the first term of $\mc I (\varphi, v, w, \mu, p)$ and using interpolation of Lemma~\ref{lemma_interpolation}, we have a constant $C>0$ such that
\begin{align}\label{1st_trm_N0}
| 4 \delta^{3/4} \left\langle \varphi' \partial_x^3v,\ v  \right\rangle_{L^2}  | &\le C \delta^{3/4} \| \varphi''\|_{L^{\infty}} \| v\| \| \partial_x^2 v\| + C \delta^{3/4} \| \varphi''\|_{L^{\infty}} \| \partial_x v\|^2 \notag \\
& \le C \delta^{3/4}  \| v\| \| \partial_x^2 v\| + C \delta^{3/4} \| \partial_x v\|^2 \notag \\
& \le \frac{\varepsilon}{2} {\delta} \| \partial_x^2 v \|^2 + C \delta^{1/8} \| v\|^2 + C \delta^{3/4} \left(\frac{\delta^{1/4}\varepsilon}{ 2C} \| \partial_x^2 v \|^2 + C_1 \delta^{-1/4} \|v \|^2\right) \notag \\
& \le \varepsilon \delta \| \partial_x^2 v \|^2 + C \delta^{1/8} \| v\|^2.
\end{align}
We focus on the remaining terms in $\mc I(\varphi, v, w, \mu, p)$.  Since $\varphi>0$, we have $\| e^{-\delta^{-1/4}\varphi} \|_{L^{\infty}} \xrightarrow{\delta \to 0^{+}} 0,$
and hence $\| \varphi' e^{-\delta^{-1/4}\varphi} \|_{L^{\infty}} \xrightarrow{\delta \to 0^{+}} 0.$ Thus, it follows that $\|e^{-\delta^{-\frac{1}{4}} \varphi}\|_{L^\infty}+\| \varphi' e^{-\delta^{-1/4}\varphi} \|_{L^{\infty}}<1$ for some small value of $\delta.$ Using these limits with \eqref{Hs_and_Hr_nrm_w(t)}, we deduce a constant $C>0$ independent of $\delta$ such that
\begin{align}\label{2nd_3rd_trm_N0}
&\left| \left\langle e^{-\delta^{-\frac{1}{4}} \varphi} \Big( \delta (v + w) \partial_x(v + w) - \delta^{3/4} \varphi' (v + w)^2 \Big), v \right\rangle_{L^2}\right| + \left|  \left\langle \Big( \langle p, \mu \rangle v - (\varphi')^4 \Big)v, \ v \right\rangle_{L^2}\right| \notag \\
& \le \delta\left| \left\langle e^{-\delta^{-\frac{1}{4}} \varphi},\ v^2 \partial_xw + v w \partial_x w\right\rangle_{L^2}\right| + \frac{\delta}{3} \left| \left\langle e^{-\delta^{-\frac{1}{4}} \varphi}, \ \partial_x(v^3) \right\rangle_{L^2}\right| + \frac{\delta}{2}\left| \left\langle e^{-\delta^{-\frac{1}{4}} \varphi}, \ w \partial_x(v^2) \right\rangle_{L^2}\right| \notag \\
& \qquad+ \delta^{3/4} \left|\left\langle \varphi' e^{-\delta^{-\frac{1}{4}} \varphi}, \ v^3 + 2 v^2 w + v w \right\rangle_{L^2}\right| + C \| v\|^2 \notag \\
& \le \delta \left\| e^{-\delta^{-\frac{1}{4}} \varphi}\right\|_{L^{\infty}} \left(\left\| v\right\|^2 {\left\| w\right\|_2} + \left\|v\right\| \|w\|_1^2\right) + C \delta^{3/4} \left\| \varphi' e^{-\delta^{-\frac{1}{4}} \varphi}\right\|_{L^{\infty}} \left\| v\right\|^3_{L^3} + C \delta^{3/4} \left\| \varphi' e^{-\delta^{-\frac{1}{4}} \varphi}\right\|_{L^{\infty}} {\left\| w\right\|_1 }\left\| v\right\|^2 \notag \\
& \quad + C \delta \left\| e^{-\delta^{-\frac{1}{4}} \varphi}\right\|_{L^{\infty}}  \left\| w\right\|_2 \left\| v \right\|^2 + \delta^{3/4} \left\| \varphi' e^{-\delta^{-\frac{1}{4}} \varphi}\right\|_{L^{\infty}} \left(\left\| v\right\|^3_{L^3} + 2 \left\| v\right\|^2 {\| w\|_1} + \left\|v \right\| \left\|w\right\|\right) + C \left\| v\right\|^2 \notag \\
& \le C \delta^{1/8} + C (1 + \delta^{1/8}) \| v\|^2 + C \delta^{3/4} \| v\|^3_{L^3}.
\end{align} 
Putting together \eqref{L^2_energy_v}, \eqref{1st_trm_N0} and \eqref{2nd_3rd_trm_N0}, we have
\begin{align}\label{L^2_energy_v_fnl}
\frac{d}{dt} \| v\|^2 + \delta \| \partial_x^2 v\|^2 \le   C \delta^{1/8} + C (1 + \delta^{1/8}) \| v\|^2 + C \delta^{3/4} \| v\|^3_{L^3}.
\end{align}

\smallskip
\textbf{Step 3. $H^s$-energy type estimate.}
Let us take the $L^2$–inner product of equation \eqref{equn_v_t} with $\partial_x^{2s} v$, and using Young’s inequality together with the fact that $H^s(\mathbb{T})$ is an algebra for $s>\tfrac12$, we obtain
\begin{align}\label{del_s_L^2_enery_intermediate}
\frac{1}{2}\frac{d}{dt} \| \partial_x^s v\|^2 + \delta \| \partial_x^{s + 2} v\|^2 
& \le \delta \| w\|_{ s + 2} \| \partial_{x}^{ s + 2} v\| + \delta    \| \partial_x^s v\| \| \partial_{x}^{s + 2} v\| + \delta \|w\|_{s} \|\partial_x^{s+2} v\| + 4 \delta^{3/4} {\| \varphi\|_{{s+1}} \| w\|_{s + 3} \| \partial_x^s v\| }\notag  \\
& \quad + \langle F_1 \partial_{x}^2 (v + w), \partial_{x}^{2s} v \rangle_{L^2}  + \langle F_2 \partial_{x}(v + w) , \partial_{x}^{2s} v \rangle_{L^2} + \langle F_3 (v + w) , \partial_{x}^{2s} v \rangle_{L^2} \notag \\
&  \hspace{4cm} + \mc J (\varphi, v, w, \mu, p)
\end{align}
where 
\begin{align}\label{exp_N_for_s1}
\mc J (\varphi, v, w, \mu, p) := & \ 4 \delta^{3/4} \left\langle \varphi' \partial_{x}^3v, \ \partial_{x}^{2s}v \right\rangle_{L^2} - \left\langle e^{-\delta^{-\frac{1}{4}} \varphi} \Big( \delta (v + w) \partial_x(v + w) - \delta^{3/4} \varphi' (v + w)^2 \Big), \partial_{x}^{2s}v \right\rangle_{L^2} \notag \\
&  + \left\langle \Big( \langle p, \mu \rangle v - (\varphi')^4v \Big), \ \partial_{x}^{2s}v \right\rangle_{L^2}=: \mc J_1+\mc J_2+\mc J_3.
\end{align}
We now estimate the remaining term in \eqref{del_s_L^2_enery_intermediate} as follows.
\begin{align}\label{est1}
\left| \left\langle F_1 \partial_{x}^2 (v + w), \partial_{x}^{2s} v \right\rangle_{L^2} \right| & = \left| \left\langle \partial_{x}^{s}(F_1 \partial_{x}^2 (v + w)), \partial_{x}^{s} v \right\rangle_{L^2} \right| \notag \\
& \le C \| F_1\|_s {\left(\norm{v}+\| \partial_{x}^{s + 2} v \|\right)} \| \partial_{x}^{s} v\| + C \| F_1\|_s \| w\|_{s + 2} \| \partial_{x}^{s} v\| \notag\\
& \le C \delta^{1/2} \| \partial_{x}^{s } v \| \|  v\| +C \delta^{1/2} \| \partial_{x}^{s + 2} v \| \| \partial_{x}^{s} v\| + C {\delta^{3/8}}  \| \partial_{x}^{s} v\|.
\end{align}
Here we have used the fact that \begin{align*}\norm{\pa_x^s(F_1\pa_x^2 v)}_{L^2}\leq \norm{F_1\pa_x^2 v}_s\leq C \norm{F_1}_s\norm{\pa_x^2 v}_s\leq  C\delta^{1/2}\norm{v}_{s+2}\leq C \delta^{1/2}\left(\norm{v}+\norm{\pa_x^{s+2}v}\right).\end{align*}
Next, we estimate $ \left\langle F_2 \partial_{x}(v + w) , \partial_{x}^{2s} v \right\rangle_{L^2}.$ Using the algebra of $H^s(\mathbb{T})\ (s > 1/2)$ and the interpolation inequality in Lemma~\ref{lemma_interpolation} we have
\begin{align}\label{est2}
\left| \langle F_2 \partial_{x}(v + w) , \partial_{x}^{2s} v \rangle_{L^2} \right| & = \left|\langle \partial_x^s (F_2 \partial_{x}v) , \partial_{x}^{s} v \rangle_{L^2}\right| + \left|\langle \partial_x^s (F_2 \partial_{x}w) , \partial_{x}^{s} v \rangle_{L^2}\right| \notag \\
& \le C \| F_2\|_{s + 1} \left(\norm{v}+\| \partial_{x}^{s + 1} v\| \right)\| \partial_{x}^s v\| + C \| F_2\|_{s + 1} \|w \|_{s + 1} \| \partial_{x}^s v\| \notag \\
& \le  C \delta^{1/4}  \norm{v}\| \partial_{x}^{s} v\| +C \delta^{1/4} \Big( \delta^{1/4} \| \partial_{x}^{s + 2} v\| + C \delta^{- 1/4} \| \partial_{x}^s v \| \Big) \| \partial_{x}^s v\| + C \delta^{1/8}  \| \partial_{x}^{s} v\| \notag \\
& \le C \delta^{1/2} \| \partial_{x}^{s + 2} v \| \| \partial_{x}^{s} v\| + C \| \partial_{x}^s v \|^2 + C \delta^{1/8}  \| \partial_{x}^{s} v\|+ C \delta^{1/4}  \norm{v}\| \partial_{x}^{s} v\|.
\end{align}
Similarly, for any $s > 1/2,$
\begin{align}\label{est3}
\left| \left\langle F_3 (v + w) , \partial_{x}^{2s} v \right\rangle_{L^2} \right| \le C \delta^{1/4} \| \partial_x^s v \|^2 + C \delta^{1/8} \| \partial_x^s v\| {+C \delta^{1/4}  \norm{v}\| \partial_{x}^{s}v\|}.
\end{align}
Putting together \eqref{del_s_L^2_enery_intermediate} and \eqref{est1}--\eqref{est3} and again using Young's inequality, we deduce
\begin{align}\label{del_s_L^2_enery_fnl}
\frac{1}{2}\frac{d}{dt} \| \partial_x^s v\|^2 + \delta \| \partial_x^{s + 2} v\|^2 
& \le C \delta^{7/8} \| \partial_{x}^{ s + 2} v\| + \delta    \| \partial_x^s v\| \| \partial_{x}^{s + 2} v\| + C \delta^{7/8} \|\partial_x^s v\| + C \delta^{5/8} \| \partial_x^s v\| \notag  \\
&\quad + C \delta^{1/2} \| \partial_x^{s + 2} v\| \| \partial_x^s v\| + C \delta^{1/8} \| \partial_x^s v\| + C \| \partial_x^s v \|^2 + C \delta^{1/4} \| \partial_x^s v\|^2 \notag \\
& \hspace{4cm}+ \mc J (\varphi, v, w, \mu, p) \notag \\
& \le \varepsilon \delta \| \partial_{x}^{ s + 2} v\|^2 + C \delta^{1/8} + C (1 + \delta^{1/8}) \| \partial_x^s v\|^2 + \mc J (\varphi, v, w, \mu, p).
\end{align}
Finally adding \eqref{L^2_energy_v_fnl} and \eqref{del_s_L^2_enery_fnl}, we obtain
\begin{align}\label{intermediate_Hs_energy_equn_v}
\frac{d}{dt} \| v\|_s^2  + \delta \| \partial_x^{s + 2} v\|^2
& \le C \delta^{1/8} + C (1 + \delta^{1/8}) \|  v\|_s^2 + + C \delta^{3/4} \| v\|^3_{L^3} + \mc J (\varphi, v, w, \mu, p).
\end{align}
We will estimate the terms in $\mc J$ by successive application of Young's inequality.
Performing integration by parts, for sufficiently small $\varepsilon>0$, we have a constant $C>0$ independent of $\delta$ such that 
\begin{align}\label{delta11}
\notag \left|\mc J_1\right|=\left| 4 \delta^{3/4} \left\langle \varphi' \partial_{x}^3v, \ \partial_{x}^{2s}v \right\rangle_{L^2} \right| &= 4 \delta^{3/4}\bigg[ \left|\left\langle \varphi'' \partial_{x}^2v, \ \partial_{x}^{2s}v \right\rangle_{L^2} \right|+\left| \left\langle\varphi' \partial_{x}^2v, \ \partial_{x}^{2s+1}v \right\rangle_{L^2} \right|\bigg]\\
\notag&\leq C \delta^{3/4}\bigg[\norm{\pa_x^{s}\left(\varphi''\pa_x^2 v\right)}\norm{\pa_x^{s} v}+\norm{\pa_x^{s}\left(\varphi'\pa_x^2 v\right)}\norm{\pa_x^{s+1} v} \bigg]\\
\notag&\leq C \delta^{3/4}\bigg[\norm{v}_{s+2}\norm{\pa_x^{s} v}+\norm{v}_{s+2}\norm{\pa_x^{s+1} v} \bigg]\\
&\leq \varepsilon \delta \norm{\pa_x^{s+2} v}^2+C\delta^{1/2}\norm{v}^2+C \norm{\pa_x^s v}^2+C \norm{v}^2.
\end{align}
For the last two terms of $\mc J(\varphi, v, w, \mu, p)$, we will use the algebra property of
$H^s(\mathbb{T})$  for $s > 1/2$ and the interpolation inequality stated in
Lemma~\ref{lemma_interpolation}.  Thus, we obtain
\begin{align}\label{2nd_3rd_trm_N1}
\notag&\left|\mc J_2\right|+\left|\mc J_3\right|\\
&=\left|\langle e^{-\delta^{-\frac{1}{4}} \varphi} \Big( \delta (v + w) \partial_x(v + w) - \delta^{3/4} \varphi' (v + w)^2 \Big), \partial_{x}^{2s}v \rangle_{L^2}\right| +\left| \left\langle \Big( \left\langle p, \mu \right\rangle v - (\varphi')^4 v\Big), \ \partial_{x}^{2s}v \right\rangle_{L^2}  \right| \notag \\
& \le \delta \left| \left\langle\partial_x^s\left(e^{-\delta^{-\frac{1}{4}} \varphi} \partial_x(v+w)^2\right), \  \partial_x^s v\right\rangle_{L^2}\right|+ C \delta^{3/4} \left\| \partial_x^s (e^{-\delta^{- \frac{1}{4}}  \varphi} \varphi' (v + w)^2)\right\| \left\| \partial_x^s v\right\| + C \left\| v\right\|_s^2 .
\end{align}
We estimate the first term in the above inequality as follows.
\begin{align}\label{delta10}\notag\delta \left| \notag\left\langle\left(e^{-\delta^{-\frac{1}{4}} \varphi} \partial_x(v+w)^2\right), \  \partial_x^{2s} v\right\rangle_{L^2}\right|&=\delta^{\frac{3}{4}}\left| \left\langle\left(e^{-\delta^{-\frac{1}{4}} \varphi} \varphi'(v+w)^2\right), \  \partial_x^{2s} v\right\rangle_{L^2}\right|\\
\notag&\qquad+\delta\left| \left\langle\left(e^{-\delta^{-\frac{1}{4}} \varphi} (v+w)^2\right), \  \partial_x^{2s+1} v\right\rangle_{L^2}\right|\\
\notag&\leq C\delta^{\frac{3}{4}}\norm{e^{-\delta^{-\frac{1}{4}} \varphi}(v+w)^2}_{s}\norm{\pa_x^s v}+C\delta \norm{e^{-\delta^{-\frac{1}{4}} \varphi}(v+w)^2}_{s}\norm{\pa_x^{s+1} v}\\
&\leq \norm{e^{-\delta^{-\frac{1}{4}} \varphi}}_{s}\left(\varepsilon\delta \norm{\pa_x^{s+2} v}^2+ C\delta^{\frac{1}{2}} \left(\norm{v}_s^4+\norm{v}_s^2+ 1\right)\right).
\end{align}
The second term of \eqref{2nd_3rd_trm_N1} can be estimated similarly as
\begin{align}\label{delta34}
\notag C \delta^{3/4} \left\| \partial_x^s (e^{-\delta^{ -\frac{1}{4}}  \varphi} \varphi' (v + w)^2)\right\| \left\| \partial_x^s v\right\| & \le C \delta^{\frac{3}{4}} \norm{  e^{-\delta^{-\frac{1}{4}} \varphi}}_{s} \norm{\partial_x^s v}  \norm{v+w}^2_{s}\\
& \leq C \delta^{\frac{3}{4}} \norm{  e^{-\delta^{-\frac{1}{4}} \varphi}}_{s}\left(1+\norm{v}_s^2+\norm{v}^4_{s}\right). 
\end{align}
Observe that, if $s\in \mathbb{N^*}$, $\norm{e^{-\delta^{-\frac{1}{4}} \varphi}}_{s}\leq C \left(\norm{e^{-\delta^{-\frac{1}{4}} \varphi}}+\norm{ \pa_x^s (e^{-\delta^{-\frac{1}{4}} \varphi})}\right)\leq C \norm{e^{-\delta^{-\frac{1}{4}} \varphi}}_{L^{\infty}}\left(1+\delta^{-\frac{s}{4}}\norm{\pa_x^s \varphi}_{L^\infty} \right).$ If not, then $s$ will be replaced by $\lceil s \rceil$ (the smallest integer greater than or equal to $s$). Therefore as $\norm{e^{-\delta^{-\frac{1}{4}} \varphi}}_{L^{\infty}}(1+C\delta^{-\frac{s}{4}})\xrightarrow{\delta \to 0^{+}} 0 $ , we have $\norm{e^{-\delta^{-\frac{1}{4}} \varphi}}_{s}<1$ for some small value of $\delta.$ 
Hence simplifying \eqref{2nd_3rd_trm_N1} together with \eqref{delta10}--\eqref{delta34}, we deduce
\begin{align}\label{2nd_3rd_trm_N2}
&\left|\mc J_2\right|+\left|\mc J_3\right|
\le \delta \varepsilon \norm{\pa_x^{s+2} v}^2 +C(1+\delta^{1/2})\norm{v}_s^2+ C\delta^{1/2} \norm{v}_s^4+C \delta^{1/2}.
\end{align}
Combining \eqref{intermediate_Hs_energy_equn_v}, \eqref{delta11} and \eqref{2nd_3rd_trm_N2} and using the fact for $s > 1/2, \ \|v\|_{L^3}^3 \le C \| v\|_{s}^3$ we obtain
\begin{align}\label{H^s_enrgy_v}
\frac{d}{dt} \| v\|_s^2  \le C \delta^{1/8} + C (1 + \delta^{1/8}) \|  v\|_s^2 +  C \delta^{1/8} \| v\|^4_s.
\end{align}
The above relation holds for any $ t < \min\{2, \delta^{-1} T^{\delta}_*\}.$ By the Gronwall Lemma and using \eqref{initial_v}, we have
\begin{align}\label{s_nrm_v(t)_after_grnwl}
\| v(t) \|^2_s \le e^{C(1 + \delta^{1/8})t} \Big( C \delta^{1/8}t  + \|u_0 - u_0^{\delta} \|_s^2 + C \delta^{1/8} \int_0^t \| v( \rho) \|_s^4 \ d\rho\Big),
\end{align}
for $ t < \min\{2, \delta^{-1} T^{\delta}_*\}$ and for $u_0 \in H^{2s + 2}(\mathbb{\T}).$ Finally, by the density of $ H^{2s + 2}(\T)$ in $H^s(\T)$ and using \eqref{stability}, we can have \eqref{s_nrm_v(t)_after_grnwl} for every $u_0 \in H^s(\T).$

\smallskip
\textbf{Step 4. Analysis of the maximal existence time.}
We are left to justify \eqref{T_delta_ge1}. Due to \eqref{u_0_delta_goesto_u_0}, we can choose $\delta_0 \in (0, 1)$ sufficiently small such that, for $0 < \delta < \delta_0,$ we have $\| u_0 - u_0^{\delta}\|_s^2 < 1/8$ and then $\| v(0) \|_s^2 < 1/8 .$ Denote $$\tau^{\delta} := \sup\{ t < \delta^{-1} T^{\delta}_* : \| v(t) \|_s < 1 \}.$$ The above inequality \eqref{s_nrm_v(t)_after_grnwl} ensures that $\tau^{\delta} > 0.$ If $\tau^{\delta} = + \infty,$ then \eqref{T_delta_ge1} is obvious. Thus consider the case when $\tau^{\delta}$ is finite. To prove \eqref{T_delta_ge1} we show that for sufficiently small $\delta_0 > 0$ and for all $ 0 < \delta < \delta_0$ we have $\tau^{\delta} \ge 1.$ We prove by contradiction. If not assume that for every $\delta_0 \in (0, 1),$ there exists a $\delta \in (0, \delta_0)$ such that $\tau^{\delta} < 1.$ Then from \eqref{s_nrm_v(t)_after_grnwl}, we have 
\begin{align}\label{1<grnwal_nrm}
1 = \| v(\tau^{\delta} ) \|_s^2 < e^{C(1 + \delta^{1/8})\tau^{\delta}} \Big( C \delta^{1/8}\tau^{\delta} + \|u_0 - u_0^{\delta} \|_s^2 + C \delta^{1/8} \int_0^{\tau^{\delta}} \| v( \rho) \|_s^4 \ d\rho\Big).
\end{align}
By the definition of $\tau^{\delta},$ for $t \in [0, \tau^{\delta}), \ \| v(t) \|_s < 1.$ For $\delta_0$ sufficiently small we have for $ 0 < \delta < \delta_0.$
\begin{align*}
e^{C(1 + \delta^{1/8})\tau^{\delta}} \Big( C \delta^{1/8}\tau^{\delta} + \|u_0 - u_0^{\delta} \|_s^2 \Big) < \frac{1}{2},
\end{align*}
 and hence
\begin{align*}
e^{C(1 + \delta^{1/8})\tau^{\delta}} \Big( C \delta^{1/8}\tau^{\delta} + \|u_0 - u_0^{\delta} \|_s^2 + C \delta^{1/8} \int_0^{\tau^{\delta}} \| v( \rho) \|_s^4 \ d\rho\Big) < 1,
\end{align*}
which contradicts \eqref{1<grnwal_nrm}. Hence, there exists a $\delta_0>0$ small enough, such that $\tau^{\delta} > 1$ for all $\delta \in (0, \delta_0).$ Thus we completes the proof of \eqref{T_delta_ge1}, and consequently \begin{align*}\| v(1) \|_{s}\xrightarrow{\delta \to 0^+}  0. \end{align*}
\end{proof}

\begin{remark}\label{rmk_ch} In order to prove \Cref{conjugate_limit} for the Cahn–Hilliard equation \eqref{ctrl_prblm}, \eqref{ch}, we indicate the modifications with respect to the proof given for the Kuramoto–Sivashinsky equation.
The only changes occur in the nonlinear terms of the equation for $v$ in \eqref{equn_v_t}.
To this end, we compute the corresponding nonlinear term:
\begin{align*}
\mathcal{N}_{CH}\big(e^{-\delta^{-1/4}\varphi}(v+w)\big)
&= -3 e^{-3\delta^{-1/4}\varphi} \Big( (v+w)^2 \partial_x^2 (v+w)
+ 2 (v+w)(\partial_x(v+w))^2 \notag \\
&\qquad
- 6 \delta^{-1/4} \varphi' (v+w)^2 \partial_x (v+w)   
+ \big(3 \delta^{-1/2} (\varphi')^2 - \delta^{-1/4} \varphi''\big) (v+w)^3 \Big).
\end{align*}
Performing similar estimates and simplifying as above, we obtain, for all $
t < \min\{2,\delta^{-1} T_*^\delta\}
$
\begin{align*}
\|v(t)\|_s^2
\le
e^{C(1+\delta^{1/8})t}
\Big(
C \delta^{1/8} t
+ \|u_0 - u_0^\delta\|_s^2
+ C \delta^{1/8} \int_0^t \|v(\rho)\|_s^6 \, d\rho
\Big).
\end{align*}
Applying arguments similar to those used above, we complete the proof.
\end{remark}
\subsection{Proof of semi-global stability} As mentioned in \Cref{sec_pre}, we prove \Cref{existence}--\Cref{point_1} here.
\begin{proof}
For any initial data $u_0, v_0 \in H^s(\T)$ and any control 
$p \in L^2_{\mathrm{loc}}(\mathbb{R}^+; \mathbb{R}^3)$, there exist positive times
$T_*^1 = T_*^1(u_0,p)$ and $T_*^2 = T_*^2(v_0,p)$ such that the corresponding
solutions of \eqref{ctrl_prblm}--\eqref{fourier_mods} satisfy
\begin{align*}
&u(t) := \mathcal R_t(u_0,p) \quad \text{is well-defined for all } t \in [0, T_*^1], \\
&v(t) := \mathcal R_t(v_0,p) \quad \text{is well-defined for all } t \in [0, T_*^2].
\end{align*}
Let us define 
\begin{align*}
\overline T := \min \{ T^1_*, T^2_*\}, \text{ and } \ \vartheta (t) := u(t) - v(t), \ \varrho(t) := u(t) + v(t), \, \text{ for all } t \in [0, \overline T],
\end{align*}
Then $\vartheta$ satisfies the following equation
\begin{align}\label{varheta_equn}
\begin{cases}
\partial_t \vartheta + \partial_{x}^4 \vartheta+  \partial_{x}^2 \vartheta + \Bigg(\mc{N}(u) - \mc{N}(v)\bigg)= \ip{p}{\mu} \vartheta, &  (t, x) \in (0, \overline T) \times  \mathbb{T},\\
\vartheta(0, x) = \vartheta_0(x) := u_0(x) - v_0(x), & x \in \mathbb{T},
\end{cases}
\end{align}
where, 
\begin{align}
 &\mc{N}_{KS}(u) - \mc{N}_{KS}(v)  = \frac{1}{2}\pa_x (\vartheta \varrho), \label{diff_KS}\\
 &\mc{N}_{CH}(u) - \mc{N}_{CH}(v) = - \pa_x^2 (\vartheta(u^2 + uv + v^2)).\label{diff_CH}
\end{align}
Multiplying first equation of \eqref{varheta_equn} by $\vartheta$ and integrating over $\T$, for sufficently small $\varepsilon>0$, we obtain a positive constant $C>0$ such that for $s > 1/2$
\begin{align}\label{est200}
\frac{d}{dt}\|\vartheta\|^2 + \|\pa_{x}^2 \vartheta\|^2
\le \varepsilon \|\pa_{x}^2 \vartheta\|^2 + C \|\vartheta\|^2 \bigl( 1 + \|u\|_s^4 + \| v \|_s^4 \bigr),
\end{align}
where the nonlinear terms are estimated as below.
\paragraph{\textit{Case 1. Kuramoto–Sivashinsky}}
\begin{align*}
\left| \int_{\mathbb{T}} \vartheta \,  \pa_x (\vartheta \varrho) \right| & \le C \left| \int_{\mathbb{T}}  \pa_x \vartheta^2 \, \varrho \right|  \le C \| \varrho\| \| \vartheta \|_1^2 \\
& \le \varepsilon \norm{\pa_x^2 \vartheta}^2+  C \left \|\varrho\|^2\right \|\vartheta\|^2.
\end{align*}
\paragraph{\textit{Case 2. Cahn–Hilliard}}
\begin{align*}
\left| \int_{\mathbb{T}} \vartheta \, \pa_x^2 \big(\vartheta(u^2 + uv + v^2)\big)\right| & = \left| \int_{\mathbb{T}} \pa_x^2 \vartheta \, \big(\vartheta(u^2 + uv + v^2)\big)\right|  \\
& \le \varepsilon \| \pa_{x}^2 \vartheta \|^2 + C \| \big(\vartheta(u^2 + uv + v^2)\big)\|^2 \\
& \le \varepsilon \| \pa_{x}^2 \vartheta \|^2 + C \| \vartheta\|^2 \|(u^2 + uv + v^2)\|_{L^{\infty}}^2 \\
& \le \varepsilon \| \pa_x^2 \vartheta \|^2 + C \| \vartheta\|^2 \| u\|_s^2 \|v\|_s^2.
\end{align*}
Next multiplying \eqref{varheta_equn} by $\pa_x^{2s} \vartheta$, we have the following
\begin{align}\label{est201}
\frac{d}{dt} \| \partial_x^s \vartheta\|^2 +\| \partial_x^{s + 2} \vartheta\|^2 \le \varepsilon \| \pa_x^{s + 2} \vartheta\|^2 + C \| \vartheta\|_s^2 \Big(1 + \| u\|^4_s + \|v\|_s^4 \Big). 
\end{align}
At this point, we have estimated the nonlinear terms in the following manner
\paragraph{\textit{Case 1. Kuramoto–Sivashinsky}}
\begin{align*}
\left| \int_{\mathbb{T}} \pa_x^{2s}\vartheta \, \pa_x \big(\varrho \vartheta\big)\right| & = \left| \int_{\T} \pa_x^{s + 1} \vartheta \, \pa_x^s (\varrho \vartheta) \right| \\
&\le \| \pa_x^{s + 1} \vartheta\| \| \varrho \vartheta\|_s \\
& \le \varepsilon \| \pa_x^{s + 2} \vartheta\|^2 + C (\| \varrho \|_s^2  + \| \varrho\|_s )\| \vartheta\|_s^2.
\end{align*}
\paragraph{\textit{Case 2. Cahn-Hilliard}}
\begin{align*}
\left| \int_{\mathbb{T}} \pa_x^{2s}\vartheta \, \pa_x^2 \big(\vartheta(u^2 + uv + v^2)\big)\right| =
& \left| \int_{\mathbb{T}} \pa_x^{s + 2}\vartheta \, \pa_x^s\big(\vartheta(u^2 + uv + v^2)\big)\right| \\
& \le  \varepsilon \| \pa_x^{s + 2}\vartheta\|^2 + C \| \pa_x^s\big(\vartheta(u^2 + uv + v^2)\big)\|^2 \\
& \le \varepsilon \| \pa_x^{s + 2}\vartheta\|^2 + C \| \vartheta\|^2_s \| u\|_s^2 \|v\|_s^2.
\end{align*}
Adding \eqref{est200} and \eqref{est201}, we have a positive constant $C$ such that
\begin{align*}
\frac{d}{dt}\|\vartheta\|_{s}^2 
\le C \|\vartheta\|_{s}^2
\bigl( 1 + \|u\|_{s}^4 + \| v\|_{s}^4 \bigr).
\end{align*}
Fix $\widehat T \in (0,\overline T)$. Integrating the previous inequality over the interval $(0,\widehat T)$ and using that
$u, v \in C([0,\widehat T];H^s(\mathbb{T}))$, we deduce the existence of a constant $C>0$ such that
\[
\|\vartheta\|_{C([0,\widehat T];H^s(\mathbb{T}))}
\le
e^{C\widehat T}
\Bigg(
\|\vartheta_0\|_s
+ \sqrt{\widehat T}\,
\|\vartheta\|_{C([0,\widehat T];H^s(\mathbb{T}))}
\left(
\|u\|_{C([0,\widehat T];H^s(\mathbb{T}))}^2
+
\|v\|_{C([0,\widehat T];H^s(\mathbb{T}))}^2
\right)
\Bigg).
\]
Since $\|u_0\|\le R$ and $\|v_0\|\le R$ for some $R>0$, there exists a constant $C_1>0$,
depending only on $\|p\|_{L^2}$, and another constant $C_2>0$ such that
\[
\|\vartheta\|_{C([0,\widehat T];H^s(\mathbb{T}))}
\le
e^{C_2\widehat T}
\left(
\|\vartheta_0\|_s
+
2(C_1+R^2)\sqrt{\widehat T}\,
\|\vartheta\|_{C([0,\widehat T];H^s(\mathbb{T}))}
\right).
\]
Since this estimate holds for every $ T \in (0,\widehat T)$, we may choose $\widehat T$ sufficiently small so that
\[
e^{C_2\widehat T}(C_1+R^2)\sqrt{\widehat T} < \tfrac14.
\]
We then set $T^* := \widehat T$ and define $C(R,p):=e^{C_2\widehat T}$, for which inequality~\eqref{stability} follows.

\end{proof}

\section{Small-time exact controllability to the constant states}\label{sec_con}
This section is devoted to the proof of small-time global exact controllability of the nonlinear system \eqref{ctrl_prblm} and \eqref{ctrl_form1}, that is the proof of \Cref{global_exact} and \Cref{global_exact1}. We make two separate sections for the Cahn-Hilliard and Kuramoto-Sivashinsky equations.
\subsection{Cahn-Hilliard equation}
Let us rewrite the Cahn-Hilliard system
\begin{align}\label{ctrl_prblm_ch}
\begin{cases}
\partial_t u + \partial_{x}^4 u+  \partial_{x}^2u =\pa_x^2(u^3) +\left(\mu_4p_4+\mu_5 p_5\right)u, &  t>0, \, x \in \mathbb{T},\\
u(0, x) = u_0(x), & x \in \mathbb{T}.
\end{cases}
\end{align}
In this section, we first prove the following local exact controllability result:
\begin{proposition}\label{exact}
	Let $T>0$ and $\Phi>0.$ Assume $\mu_4, \mu_5\in H^1(\T)$ satisfying \eqref{est_mu1}. Then there exists $R>0$ such that for any $u_0\in L^2(\T),$ satisfying $\norm{u_0-\Phi}_{L^2(\T)}<R$, there exist  controls $p_4, p_5 \in L^2((0,T);\R)$, such that the solution $u$ of \eqref{ctrl_prblm_ch} satisfies $u(T,\cdot)=\Phi$ in $\T.$
\end{proposition}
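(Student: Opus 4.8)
\emph{Reduction to a null-controllability problem.} The plan is to linearize \eqref{ctrl_prblm_ch} around the constant target $\Phi$ and to absorb the genuinely nonlinear part through a source term. Since $\Phi$ is a stationary solution for $p_4=p_5=0$, setting $y:=u-\Phi$ and expanding $(\Phi+y)^3=\Phi^3+3\Phi^2 y+3\Phi y^2+y^3$ (the constant term is killed by $\partial_x^2$) gives
\[
\partial_t y+\partial_x^4 y+(1-3\Phi^2)\partial_x^2 y=\Phi\bigl(\mu_4 p_4+\mu_5 p_5\bigr)+\mathcal G(y,p),
\qquad y(0)=u_0-\Phi,
\]
with the quadratic-and-higher remainder
\[
\mathcal G(y,p):=\partial_x^2\bigl(3\Phi y^2+y^3\bigr)+\bigl(\mu_4 p_4+\mu_5 p_5\bigr)y .
\]
The proposition is equivalent to driving $y$ to $0$ at time $T$. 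I would first establish null controllability for the linear part (with $\mathcal G$ treated as a prescribed source), obtain an explicit control cost, and then run a fixed-point argument. Observe that the zero-average (constant) mode can only be moved by the control, since the uncontrolled Cahn--Hilliard dynamics conserves $\int_{\T}u$; the hypothesis $\ip{\mu_4}{c_0}_{L^2(\T)}\neq 0$ in \eqref{est_mu1} is exactly what makes this mode controllable.

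\emph{Linear null controllability via the moment method.} Let $A:=\partial_x^4+(1-3\Phi^2)\partial_x^2$, which is diagonalized by the basis \eqref{fourier_basis}: $A c_k=\gamma_k c_k$, $A s_k=\gamma_k s_k$ with $\gamma_k:=\widehat\lambda_k^2+(3\Phi^2-1)\widehat\lambda_k=k^2(k^2+3\Phi^2-1)$. Thus $\gamma_0=0$, while $\gamma_k\ge 3\Phi^2 k^2>0$ for $k\ge 1$; the $\gamma_k$ are strictly increasing, hence simple on each of the cosine and sine blocks, and $\gamma_k\sim k^4$, so $\sum_{k\ge 1}\gamma_k^{-1/2}<\infty$. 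Projecting the Duhamel formula for $z(T)$ (the solution with source $\Phi(\mu_4p_4+\mu_5p_5)$) onto $c_k$ and using $\ip{\mu_5}{c_k}_{L^2(\T)}=0$, the condition $\ip{z(T)}{c_k}_{L^2(\T)}=0$ becomes the moment equation
\[
\Phi\,\ip{\mu_4}{c_k}_{L^2(\T)}\int_0^T e^{-\gamma_k(T-t)}p_4(t)\,dt=-e^{-\gamma_k T}\ip{z(0)}{c_k}_{L^2(\T)},\qquad k\ge 0,
\]
and the analogous one for $p_5$ against $\{s_k\}_{k\ge 1}$ using $\ip{\mu_4}{s_k}_{L^2(\T)}=0$. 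The filtering \eqref{est_mu1} therefore \emph{decouples} the problem into two scalar moment problems, each associated with the \emph{simple} spectrum $\{\gamma_k\}$. Following the classical Fattorini--Russell construction \cite{FR1,FR2} and its fourth-order adaptations \cite{Cer10,HsM25}, one builds biorthogonal families $\{\theta_k^{c}\},\{\theta_k^{s}\}$ to $\{e^{-\gamma_k(T-\cdot)}\}$ in $L^2(0,T)$ with a bound of the form $\|\theta_k\|_{L^2(0,T)}\le C(T)\,e^{C\gamma_k^{1/2}}$, and sets $p_4=\sum_k \tfrac{-e^{-\gamma_k T}\ip{z(0)}{c_k}}{\Phi\,\ip{\mu_4}{c_k}}\theta_k^{c}$ (similarly $p_5$). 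Using $|\ip{\mu_4}{c_k}_{L^2(\T)}|\ge C_1\widehat\lambda_k^{-\theta_1}=C_1k^{-2\theta_1}$ from \eqref{est_mu1}, the factor $e^{-\gamma_k T}\sim e^{-k^4 T}$ dominates both $e^{C\gamma_k^{1/2}}$ and the polynomial weight $k^{2\theta_1}$, so Cauchy--Schwarz yields $\|(p_4,p_5)\|_{L^2(0,T)}\le C(T,\Phi)\|z(0)\|$. Tracking the dependence on the control horizon gives the explicit small-time cost $C(\tau,\Phi)\sim e^{K/\tau^{1/3}}$ characteristic of fourth-order parabolic operators.

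\emph{Source term method and fixed point.} With this explicit cost, I would apply the source term method \cite{LTT}: choosing two weights blowing up at $t=T$, one for the state ($\rho_{\mathcal S}$) and a slower one for the right-hand side ($\rho$) with $\rho/\rho_{\mathcal S}\to 0$ at the correct rate dictated by $e^{K/\tau^{1/3}}$, one obtains a linear controllability result that, for any initial datum $z(0)$ and any source $f$ with $f/\rho\in L^2$, produces a control $p=(p_4,p_5)$ steering $z$ to $0$ while keeping $z/\rho_{\mathcal S}\in L^2$, with a quantitative estimate. One then defines the map sending a pair $(y,p)$ to the solution of the linear controlled problem with source $f=\mathcal G(y,p)$, and shows it is a contraction on a small ball of the weighted space provided $\|u_0-\Phi\|_{L^2(\T)}<R$ for $R$ small. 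The faster blow-up of $\rho_{\mathcal S}$ is what lets the quadratic source be estimated by $\|y/\rho_{\mathcal S}\|^2$ times a small constant, closing the fixed point; its limit is the desired solution of \eqref{ctrl_prblm_ch} with $u(T,\cdot)=\Phi$. The case $u_0<0$, target $-\Phi$, is symmetric.

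\emph{Main obstacle.} The crux is the linear step. On $\T$ the operator $A$ has \emph{double} eigenvalues $\gamma_k$ (shared by $c_k$ and $s_k$), and a single scalar bilinear control cannot resolve a two-dimensional eigenspace; this is precisely the joint-moment obstruction. The spectral filtering \eqref{est_mu1}, making $\mu_4$ act only on cosine modes and $\mu_5$ only on sine modes, converts the problem into two independent simple-spectrum moment problems, after which the standard biorthogonal machinery applies. A secondary technical point, to be handled in the weighted estimates, is that the Cahn--Hilliard nonlinearity $\partial_x^2(3\Phi y^2+y^3)$ loses two derivatives; this is compensated by the strong smoothing of the fourth-order semigroup (which instantly maps $H^{-2}$-type sources into the state space), so the source $\mathcal G(y,p)$ can indeed be controlled in the weighted norm built above.
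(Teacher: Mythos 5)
Your proposal is correct and follows essentially the same route as the paper: the same shift $v=u-\Phi$, the same decoupling of the moment problem into cosine and sine blocks via \eqref{est_mu1} (simple spectrum on each block), a biorthogonal-family construction with an explicit control cost (the paper invokes \cite[Theorem IV.1.10]{Boyer23} and settles for a bound of the form $Ce^{C/T}$ rather than the sharper $e^{K/T^{1/3}}$ scaling you mention — either suffices), and then the source term method of \cite{LTT} combined with a Banach fixed point on a small ball. The only technical divergence is in handling the derivative loss of the nonlinearity: the paper expands $\partial_x^2(3\Phi v^2+v^3)$ into $6v(\partial_x v)^2+6\Phi(\partial_x v)^2+3v^2\partial_x^2 v+6\Phi v\partial_x^2 v$ and estimates these terms in $L^1(0,T;L^2(\T))$ using the $L^2(0,T;H^2(\T))$ regularity built into the weighted state space, rather than keeping the source in an $H^{-2}$-type space and appealing to semigroup smoothing as you suggest.
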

Introduce the change of variable $v=u-\Phi.$ Then $v$ is the solution of the following control problem
\begin{align}\label{Phi_shifted_ctrl_prblm}
\begin{cases}
\partial_t v+ \partial^4_{x} v+    \partial^2_{x}v - 3 \Phi^2 \partial^2_x v =  (p_4\mu_4+p_5\mu_5) (\Phi +v)+ F_{CH}, &  (t,x) \in (0, T) \times \mathbb{T},\\
v(0, x) = v_0(x):=u_0-\Phi, & x \in \mathbb{T},
\end{cases}
\end{align}
where $F_{CH} := 6 v (\partial_x v)^2 + 6 \Phi (\partial_x v)^2 + 3 v^2 \partial^2_x v + 6 v \partial^2_x v\Phi.$ Consequently, \Cref{exact} reduces to a corresponding local null controllability problem for \eqref{Phi_shifted_ctrl_prblm}.
\subsubsection{\bf Controllability of the linearized system}First let us consider the linearized control problem
\begin{align}\label{Phi_shifted_ctrl_prblm_lin}
\begin{cases}
\partial_t v + \partial^4_{x} v +    \partial^2_{x}v - 3 \Phi^2 \partial^2_x v =  (p_4\mu_4+p_5\mu_5) \Phi, &  (t,x) \in (0, T) \times \mathbb{T},\\
v(0, x) = v_0(x), & x \in \mathbb{T}.
\end{cases}
\end{align}
Equation \eqref{Phi_shifted_ctrl_prblm_lin} can equivalently be rewritten in the following abstract form
\begin{equation}\label{abstract_CH}
\begin{cases}
	\dfrac{d}{dt}v = \mc A v + \mc B (p_4,p_5), & t \in (0,T),\\
	v(0) = v_0,
\end{cases}
\end{equation}
where 
the operator $\mc A : \mc D(\mc A)\subset L^2(\T)\to L^2(\T)$ is thus given by
\begin{equation*}
\mc Av =  -\pa_x^4 v- (  1 - 3\Phi^2) \pa_x^2 v, \text{ with } \mc D(\mc A):=H^4(\T).
\end{equation*}
Clearly, $\mc A$ is densely defined, and its adjoint $ \mc A^*: \mc D(\mc A^*)\subset L^2(\T)\to L^2(\T)$ is
\begin{equation*}
\mc A^* v = -\pa_x^4 v- (  1 - 3\Phi^2)\pa_x^2 v, \text{ with } \mc D(\mc A^*):=H^4(\T).
\end{equation*}
We can prove that $\mc A$ generates an analytic semigroup $\{\mc S(t)\}_{t\ge 0}$ on $L^2(\mathbb{T})$. The control operator $\mc B$ $\in \mathcal L(\mathbb{R}^2, L^2(\T))$ satisfies $\mc B(p_4,p_5):=(p_4\mu_4+p_5\mu_5)\Phi.$ 
As $v_0\in L^2(\T),$ $p_4, \, p_5\in L^2(0,T),$ and $\mu_4,\, \mu_5 \in H^1({\T})$, equation \eqref{Phi_shifted_ctrl_prblm_lin} possesses a unique mild solution $v\in C([0,T];L^2(\T))\cap L^2((0,T);H^2(\T)).$
Moreover, certain examples for $\mu_4$ and $\mu_5$ satisfying \eqref{est_mu1} can be found in \cite[Example 4.2]{Duca_Pozzoli_Urbai_JMPA_2025}.

\noindent
The eigen-elements of the operator $\mc A^*$ are given by \begin{align*}
&\hspace{4cm}\text{ Eigenvalues}: \lambda_k=-k^4+ (  1 - 3 \Phi^2) k^2, \quad  \forall\, k\in\mathbb{N}. \\
&\text{ Eigenfunctions }: c_0(x)=\frac{1}{\sqrt{2\pi}}, \qquad 
c_k(x)=\frac{1}{\sqrt{\pi}}\cos(kx), \quad  
s_k(x)=\frac{1}{\sqrt{\pi}}\sin(kx), \qquad \forall\, k\in\mathbb{N}^*.
\end{align*}
These functions form a Hilbert basis of $L^2(\mathbb{T})$.

\noindent
We prove the following controllability result for the linearized system.
\begin{proposition}\label{exact_control_lin_CH}
Let $T>0$ be given and assume that $\mu_4, \mu_5 \in H^1(\mathbb{T})$ satisfy \eqref{est_mu1}.
Then for any  $v_0\in L^2(\T)$, there exist controls $p_4, \, p_5 \in L^2(0,T)$ such that the system \eqref{Phi_shifted_ctrl_prblm_lin} satisfies $v(T)=0$. Moreover, the controls satisfy
	\begin{equation}\label{cost_CH}
\norm{p_4}_{L^2(0,T)}+\norm{p_5}_{L^2(0,T)}\leq Ce^{\frac{C }{T}}\norm{v_0}_{L^2(\T)},
	\end{equation} 
	for some constant $C>0$ which is independent of $T$ and $v_0$.
\end{proposition}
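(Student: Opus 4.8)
The plan is to establish null controllability of \eqref{Phi_shifted_ctrl_prblm_lin} by the method of moments, exploiting the spectral filtering built into \eqref{est_mu1}. Since $\{c_0,c_k,s_k\}_{k\ge1}$ is a Hilbert basis of $L^2(\T)$ and $v(T)=\mc S(T)v_0+\int_0^T\mc S(T-t)\mc B(p_4,p_5)(t)\,dt$, the requirement $v(T)=0$ is equivalent to the vanishing of every Fourier coefficient of $v(T)$. Projecting onto $c_k$, using that $\mc A$ is self-adjoint so $\mc S(t)c_k=e^{\lambda_k t}c_k$, and invoking $\ip{\mu_5}{c_k}=0$ (so that only $p_4$ couples to the cosine modes), I obtain for every $k\ge0$
\begin{equation*}
e^{\lambda_kT}\ip{v_0}{c_k}+\Phi\,\ip{\mu_4}{c_k}\int_0^Te^{\lambda_k(T-t)}p_4(t)\,dt=0,
\end{equation*}
and symmetrically, projecting onto $s_k$ and using $\ip{\mu_4}{s_k}=0$,
\begin{equation*}
e^{\lambda_kT}\ip{v_0}{s_k}+\Phi\,\ip{\mu_5}{s_k}\int_0^Te^{\lambda_k(T-t)}p_5(t)\,dt=0,\qquad k\ge1.
\end{equation*}
Thus \eqref{est_mu1} decouples the problem into two \emph{independent} scalar moment problems: one for $p_4$ carried by the cosine spectrum $\{\lambda_k\}_{k\ge0}$ and one for $p_5$ carried by the sine spectrum $\{\lambda_k\}_{k\ge1}$.

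Setting $\mu_k:=-\lambda_k=k^4-(1-3\Phi^2)k^2$, I first record the spectral facts needed for the construction. A short computation gives $\mu_k\ge0$ with $\mu_0=0$; the $\mu_k$ are pairwise distinct within each family (the only conceivable coincidence would force $k_1^2+k_2^2=1-3\Phi^2<1$, impossible for integers); and $\mu_{k+1}-\mu_k\sim4k^3\to\infty$, so $\sum_k\mu_k^{-1}<\infty$ and a uniform gap condition holds after discarding finitely many low modes. Under exactly these hypotheses, Fattorini–Russell type results for fourth-order parabolic problems (see \cite{FR1,FR2,Cer10,HsM25}) furnish, for each family, an $L^2(0,T)$-biorthogonal family $\{q_k\}$ to the exponentials $\{e^{-\mu_k\tau}\}$, namely $\int_0^Tq_j(\tau)e^{-\mu_k\tau}\,d\tau=\delta_{jk}$, together with the bound $\|q_k\|_{L^2(0,T)}\le Ce^{C/T}e^{C\sqrt{\mu_k}}$ for a constant $C$ independent of $T$.

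With these at hand, the change of variable $\tau=T-t$ turns the cosine equations into $\int_0^Te^{-\mu_k\tau}p_4(T-\tau)\,d\tau=b_k$, where $b_k:=-e^{-\mu_kT}\ip{v_0}{c_k}/(\Phi\ip{\mu_4}{c_k})$, so I define $p_4(T-\tau):=\sum_{k\ge0}b_kq_k(\tau)$ and $p_5$ analogously. It remains to bound these series in $L^2(0,T)$. Using \eqref{est_mu1} to estimate $|\ip{\mu_4}{c_k}|^{-1}\le Ck^{2\theta_1}$ and $|\ip{v_0}{c_k}|\le\|v_0\|_{L^2(\T)}$, and combining with the biorthogonal bound, I get
\begin{equation*}
\|p_4\|_{L^2(0,T)}\le\sum_{k\ge0}|b_k|\,\|q_k\|_{L^2(0,T)}\le Ce^{C/T}\|v_0\|_{L^2(\T)}\sum_{k\ge0}k^{2\theta_1}e^{-\mu_kT+C\sqrt{\mu_k}}.
\end{equation*}
Because $\mu_k\sim k^4$ while $\sqrt{\mu_k}\sim k^2$, the exponent $-\mu_kT+C\sqrt{\mu_k}$, maximized in the continuous variable near $\sqrt{\mu_k}\sim1/T$, is $O(1/T)$ there, so the mode sum is bounded by $Ce^{C/T}$ and \eqref{cost_CH} follows for $p_4$; the same argument with $\theta_2,C_2$ handles $p_5$. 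Conversely, by construction the controls so defined annihilate every Fourier coefficient of $v(T)$, hence $v(T)=0$.

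The delicate point, and the reason two potentials are needed, is precisely the double multiplicity of the spectrum on $\T$: a single scalar control cannot solve two independent moment equations sharing the same kernel $e^{-\mu_k\tau}$. The hypothesis \eqref{est_mu1}, making $\mu_4$ act only on cosines and $\mu_5$ only on sines, is exactly what splits the spectrum into two simple subproblems and removes this obstruction. The second subtlety is the $T$-uniform cost: it rests on the biorthogonal factor being of size $e^{C\sqrt{\mu_k}}$ with $\sqrt{\mu_k}=o(\mu_k)$, which together with the fourth-order growth $\mu_k\sim k^4$ keeps the mode sum of order $e^{C/T}$ rather than divergent (an exponent $\mu_k^\beta$ with $\beta>\tfrac12$ would break the bound).
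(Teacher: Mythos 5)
Your proposal is correct and follows essentially the same route as the paper: reduction of null controllability to two decoupled moment problems (cosine modes for $p_4$, sine modes for $p_5$) thanks to \eqref{est_mu1}, existence of a biorthogonal family of exponentials with a $T$-uniform bound $Ce^{C/T}e^{C\sqrt{\mu_k}}$ (the paper gets this from Boyer's Theorem IV.1.10 after verifying the sector, counting-function and gap conditions, where you cite Fattorini--Russell-type results), and the same series construction of the controls with the polynomial factor and the $C\sqrt{\mu_k}$ term absorbed into $e^{C/T}$. One point to tighten: bounding the exponent $-\mu_k T + C\sqrt{\mu_k}$ by $O(1/T)$ termwise does not by itself yield convergence of the sum over $k$; instead write $-\mu_k T + C\sqrt{\mu_k} \le C'/T - \mu_k T/2$ so the series keeps Gaussian-type decay in $k$, which is exactly the role of the Young-inequality step $Ck^2 \le \frac{C^2}{C_1 T} + \frac{C_1 k^4 T}{4}$ in the paper's proof.
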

\begin{proof}
   The proof proceeds by considering the following three steps of the method of moments.

    \noindent
\textbf{Step 1. Reduction to the moment problems.}
 At first, let us consider the following adjoint system 
\begin{equation}\label{adjoint_CH}
\begin{cases}
-\dfrac{d}{dt}\phi = \mc A^*\phi , & t \in (0,T),\\
	 \phi(T) = \phi_T.
\end{cases}
\end{equation}
Taking the inner product of \eqref{Phi_shifted_ctrl_prblm_lin} with $\phi$  in $L^2(\T)$, where $\phi$ is the solution of the adjoint equation \eqref{adjoint_CH}, and then integrating over $(0,T)$ we have
\begin{align}\label{equivalent_m_CH}
\ip{v(T,\cdot)}{\phi_T}_{L^2(\T)}-	\ip{v_0}{\phi(0,\cdot)}_{L^2(\T)}=\Phi\int_{0}^{T} {\Big\langle{p_4(t)\mu_4+p_5(t)\mu_5, \phi(t,\cdot)}\Big\rangle}_{L^2(\T)} dt.	
\end{align}	
To prove $v(T)=0,$ it is enough to establish that for all $\phi_T\in L^2(\T)$, the following identity holds:
\begin{align}\label{equivalent_CH}
\ip{v_0}{\phi(0,\cdot)}_{L^2(\T)}+\Phi\int_{0}^{T} p_4(t) \ip{\mu_4}{\phi(t,\cdot)}_{L^2(\T)} dt +\Phi\int_{0}^{T} p_5(t) \ip{\mu_5}{\phi(t,\cdot)}_{L^2(\T)} dt =0.
\end{align}
Our next task is to convert the above identity into a sequential problem by using the orthonormal eigenbasis $\{c_0,c_k,s_k\}_{k\in \mathbb{N}^*}$. To this end, we successively choose $\phi_T=c_0,c_k,$ and $ s_k$. As $\lambda_k$ is the eigenvalue of the operator ${\mc A}^*$, and in view of the assumption \eqref{est_mu1} together with the orthonormality of the eigenfunction $\{c_0, c_k, s_k\}_{k\in \mathbb{N}^*},$  the solution of the adjoint problem \eqref{adjoint_CH} reads as follows
\begin{equation}\label{eq:phi_CH}
\phi(t,x)= c_0, \quad e^{\lambda_k(T-t)}c_k(x), \quad e^{\lambda_k(T-t)}s_k(x).
\end{equation}
Plugging \eqref{eq:phi_CH} into \eqref{equivalent_CH}, we arrive at the following identities, which are equivalent to \eqref{equivalent_CH}.
\begin{align}\label{moment_CH}
\begin{cases}
	&-\frac{e^{\lambda_k T}\ip{v_0}{c_k}_{L^2(\T)}}{\Phi \ip{\mu_4}{c_k}_{L^2(\T)}}=\int_{0}^{T}p_4(t) e^{\lambda_k(T-t)}dt
=\int_{0}^{T}h_4(t) e^{\lambda_k t}dt \, \quad
 \forall k \, \in \mathbb{N},\\[10pt]
 & -\frac{e^{\lambda_k T}\ip{v_0}{s_k}_{L^2(\T)}}{\Phi \ip{\mu_5}{s_k}_{L^2(\T)}}=\int_{0}^{T}p_5(t) e^{\lambda_k(T-t)}dt
=\int_{0}^{T}h_5(t) e^{\lambda_k t}dt \, \quad
 \forall k \, \in \mathbb{N}^*,
 \end{cases}
\end{align}
where $h_i(t)=p_i(T-t),$ $i=4,5.$ Thus, it is enough to establish the existence and a suitable norm estimate for $h_i.$

\noindent
\textbf{Step 2. Existence of a biorthogonal family of exponentials.}
We first find the existence of $h_4$. Let us denote, for all $k\in \mathbb{N}^*,$ $\Lambda_k=-\lambda_{k-1}+ 1,$  and the collection $\Lambda =\{\Lambda_k, k\in \mathbb{N}^*\}.$
Our next goal is to check that the collection $\Lambda$ satisfies all the hypotheses of \cite[Theorem IV.1.10]{Boyer23}.
\begin{itemize}
\item[H1:] There exists $\theta > 0$ such that the family $ \Lambda\subset \mathbb{C}$
satisfies the following sector condition with parameter $\theta:$ 
\begin{equation*}
\Lambda \;\subset	S_{\theta} \;\stackrel{\text{def}}{=}\; 
\left\{\, z \in \mathbb{C} \;\middle|\; 
\Re z > 0,\; | \Im z | < (\sinh \theta)\, (\Re z) \right\}.
\end{equation*}
By the definition of $\Lambda,$ for all $k \in \mathbb{N}^*, \Lambda_k$ are positive real numbers, so the required condition is verified with some suitable $\theta>0.$

\item[H2:] Let $\kappa > 0 $. Define the counting function $
\mc	N_{\Lambda}(r) := \#\{\, \lambda \in \Lambda : |\lambda| \le r \,\}.
	$ The family 
$	\Lambda$ satisfies the asymptotic assumptions 
	\begin{align}\label{cn}
	\mc N_{\Lambda}(r) \le \kappa\, r^{1/4}, \qquad \forall \, r > 0.
	\end{align}
Using the definition of $\lambda_k$,
	$\Lambda_k=-\lambda_{k-1} +1= (k-1)^{4}-(  1 - 3 \Phi^2)(k-1)^{2} +1.$
    
\textit{Case1.} If $1-3\Phi^2\leq 0,$ then 
	$|\Lambda_k|\ge (k-1)^{4}-(  1 - 3 \Phi^2)(k-1)^{2} +1\geq (k-1)^4.$
Hence, if 
	$|\Lambda_k|\le r$, then
$	k\le 1+r^{1/4}.$  Therefore $
	\mc N_{\Lambda}(r) \le 1+r^{1/4}.$ 

\textit{Case 2.}
    If $1-3\Phi^2>0,$
	then there exists $C_1>0$ such that
	$|\Lambda_k|\ge C_1 (k-1)^4,$
	for $k\in \mathbb{N}^*.$ Thus similarly as Case 1, we have $\mc N_{\Lambda}(r) \le 1+C_1 r^{1/4}.$

     For small $r$, we may choose $\tilde r>0$ such that $\mc N_{\Lambda}(r)=0$ for all $r<\tilde r$. Thus, the required bound is verified. Next, by possibly increasing constant $C>0$, the same estimate \eqref{cn}
 holds for all $r\ge \tilde r$. Hence, the bound $\mc N_{\Lambda}(r)\le C r^{1/4}$ is valid uniformly for all $r>0$.
\item[H3:] Let $\rho > 0$ be given. The family $\Lambda$
	satisfies the gap condition with parameter $\rho$ if we have
	$$
	|\Lambda_m - \Lambda_n| \ge \rho, \qquad \forall\, m \ne n \in \mathbb{N}^*.
	$$
	This condition is obvious with $\rho= 3\Phi^2.$
\end{itemize}
Thus using \cite[Theorem IV.1.10]{Boyer23}, there exists a sequence
$\{e_k\}_{k\in\mathbb{N}^*} \subset L^2(0,T)$
such that, for all \(k,j \in \mathbb{N}^*\),
\begin{equation}\label{biorth_CH}
\int_{0}^{T} e_k(t)\, e^{-\Lambda_j t}\, dt = \delta_{k,j},
\end{equation}
and there exists a constant $C>0$ independent of $T$ such that
$\forall k \, \in\mathbb{N}^*$ 
\begin{equation}\label{nrm_ek}
\|e_k\|_{L^2(0,T)} \le C\, e^{C\left(\sqrt{\Lambda_k} + \frac{1}{T}\right)}.
\end{equation}

\noindent
\textbf{Step 3. Construction and estimates of the controls.}
Let us define the control function $h_4$ as follows:
\begin{align*}
	h_4(t):=-\sum_{k \in \mathbb{N}}\frac{e^{\lambda_k T}\ip{v_0}{c_k}_{L^2(\T)}}{\Phi \ip{\mu_4}{c_k}_{L^2(\T)}} \, e^{-t}\, e_{k+1}(t).
	\end{align*}
Clearly, this $h_4$ satisfies the second equation of \eqref{moment_CH}. We just need to show that $h_4\in L^2(0,T).$
Thus using \eqref{est_mu1} and \eqref{nrm_ek},
\begin{align*}
\norm{h_4}_{L^2(0,T)}&\leq \left(Ce^{\frac{C}{T}}+C \sum_{k \in \mathbb{N}^*} k^{2\theta_1} e^{C k^2 + \frac{C}{T}}e^{\Big(-k^4+ (1 - 3\Phi^2)k^2\Big) T}\right) \norm{v_0}_{L^2(\T)}.
\end{align*}
Observe that, there exists $k_0 \in \mathbb{N}$ such that $
-k^4+ (1 - 3\Phi^2)k^2 \le -C_1 k^4, \text{ for all } k > k_0,
$
for some constant $C_1 > 0$. Moreover, one can absorb $ k^{2\theta_1} $ in $e^{Ck^2}$ for all $k\in \mathbb{N}^*$ with a possibly large constant $C>0.$ Consequently, we estimate the control as follows:
\begin{align*}
\norm{h_4}_{L^2(0,T)}
&\leq \left(Ce^{\frac{C}{T}}+Ce^{CT}+C\sum_{k> k_0 } e^{C k^2 + \frac{C}{T}}e^{-C_1 {k^4} T} \right)\norm{v_0}_{L^2(\T)}.
\end{align*}
Using Young's inequality we have $Ck^2\leq \frac{C^2}{C_1 T}+\frac{C_1 k^4T}{4},$ and putting this in the above estimate there exists constant $C>0$ such that
\begin{align*}
	\norm{h_4}_{L^2(0,T)}&\leq C \left(e^{CT}+ e^{\frac{C}{T}}\right)\norm{v_0}_{L^2(\T)}.
\end{align*}
	Without loss of generality, we may assume that $T<1$. In this case, we obtain the desired control cost estimate
\begin{align*}
\|h_4\|_{L^2(0,T)} \le C \, e^{\frac{C}{T}}\, \|v_0\|_{L^2(\T)}.
\end{align*}
The case $T \ge 1$ can be reduced to the previous one. Indeed, any continuation by zero of a control defined on $(0,1/2)$ is also a control on $(0,T)$, and the estimate follows from the decrease of the control cost with respect to time.

A similar argument establishes the existence of $h_5$ together with the required cost estimate. This completes the proof of \Cref{exact_control_lin_CH}.
\end{proof}

\subsubsection{\bf Source term method and local controllability of the nonlinear problem}\label{sec_source_term} This section is devoted to the proof of local exact controllability (\Cref{exact}) of the nonlinear system \eqref{ctrl_prblm_ch}.
The strategy is to employ the source term method \cite{LTT} followed by the Banach fixed-point theorem to ensure local exact controllability. We first choose constants  $p>0$, $q>1$ in such a way that 
\begin{align}\label{choice-p_q}
	1<q<\sqrt{2}, \ \ \text{and} \ \ p> \frac{q^2}{2-q^2}.
\end{align}
We fix a constant $M>0$ and redefine the control cost as $Me^{M/T}$, as obtained in the control cost estimate \eqref{cost} for the corresponding linearized control problem \Cref{exact_control_lin_CH}. We then define the functions
\begin{equation}\label{def_weight_func}
	\rho_0(t)=\left\{\begin{array}{ll} e^{-\frac{pM}{(q-1)(T-t)}} & t \in [0, T), \\ 0 & t=T, \end{array}\right.   \quad 
	\rho_{\F}(t)= \left\{\begin{array}{ll} e^{-\frac{(1+p)q^2 M}{(q-1)(T-t)}} & t \in [0, T), \\ 0 & t=T. \end{array}\right.
\end{equation} 
Note that the functions $\rho_0$ and $\rho_{\F}$ are continuous and non-increasing in $[0,T]$. 
We next introduce the following weighted spaces
\begin{subequations}
	\begin{align}
		\label{space_F}
		&\F:= \left\{f\in L^1(0,T; L^{2}(\T)) \ \Big| \ \frac{f}{\rho_\F} \in L^1(0,T; L^{2}(\T))  \right\}, \\
		\label{space_Y} 
		&	\Y := \left\{ u\in C([0, T]; L^2(\T)) \Big| \frac{u}{\rho_0} \in C([0,T]; L^2(\T)) \cap L^2(0,T; H^2(\T))\right\}, \\
		\label{space_V}
		&	\V:= \left\{ p\in L^2(0,T)  \ \Big| \ \frac{p}{\rho_0}\in L^2(0,T)   \right\}.
	\end{align}
\end{subequations}
The norms associated with these weighted spaces are defined accordingly below.
\begin{align*}
&\norm{v}_{\mc S}:=\norm{\rho_0^{-1}v}_{C([0,T]; L^2(\T)) }+\norm{\rho_0^{-1}v}_{L^2(0,T; H^2(\T))}\\
	&\|f\|_{\F} := \| \rho^{-1}_\F f\|_{ L^1(0, T; L^2(\T)}, \quad 
	\|h\|_{\V} := \|\rho^{-1}_0 h\|_{L^2(0, T)}.
\end{align*}
Consider the linearized system with nonhomogeneous
source term:
\begin{equation}\label{abstract_source}
	\begin{cases}
		\dfrac{d}{dt}v = \mc Av + \mc B(p_4,p_5)+f, & t \in (0,T),\\
		v(0) = v_0,
	\end{cases}
\end{equation}
 Using arguments similar to those in Propositions~2.3 and Proposition~2.8 of \cite{LTT}, we obtain the following result.
\begin{proposition}\label{Proposition-weighted}
	Let $T>0$.  For any $f\in \F$ and $v_0 \in L^2(\T)$, there exist controls $p_4, p_5\in \V$ such that \eqref{abstract_source} admits a unique solution 
	$v\in \Y$ satisfying $v(T)=0.$
	Further, the solution and the control satisfy
	\begin{align}\label{estimate-weighted}
		\notag\left\|\frac{v}{\rho_0} \right\|_{C([0,T]; L^2(\T))}+\left\|\frac{v}{\rho_0} \right\|_{L^2(0,T; H^2(\T))} & + \left\|\frac{p_4}{\rho_0} \right\|_{L^2(0,T)}+\left\|\frac{p_5}{\rho_0} \right\|_{L^2(0,T)}\\
		&\leq  C \left(\|v_0\|_{ L^2(\T)} + \left\|{\frac{f}{\rho_\F}}\right\|_{L^1(0,T;L^2(\T)} \right),
	\end{align}
	where the constant $C>0$ does not depend on $v_0$, $f$, $p_4$, $p_5$ and $T$. 
\end{proposition}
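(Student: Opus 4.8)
The plan is to adapt the source term method of Liu--Takahashi--Tucsnak \cite{LTT} (their Propositions~2.3 and~2.8), using as the only external ingredient the null-controllability cost estimate $\norm{p_4}_{L^2}+\norm{p_5}_{L^2}\le Me^{M/T}\norm{v_0}_{L^2(\T)}$ for the \emph{free} linearized system, established in \Cref{exact_control_lin_CH}. The analyticity of the semigroup $\{\mc S(t)\}_{t\ge 0}$ generated by $\mc A$ will supply the parabolic smoothing needed for the $L^2(0,T;H^2(\T))$ part of the bound \eqref{estimate-weighted}. First I would fix a time sequence $0=T_0<T_1<\cdots\uparrow T$ with geometrically shrinking gaps, e.g.\ $T-T_{k+1}=q^{-2}(T-T_k)$, so that $\rho_k:=T_{k+1}-T_k\sim q^{-2k}(T-T_0)$ and the local free null-controllability cost on $I_k:=[T_k,T_{k+1})$ is $Me^{M/\rho_k}$.

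On each $I_k$ I would apply \Cref{exact_control_lin_CH} to the free system: letting $v(T_k)$ be the state inherited from the previous interval (with $v(T_0)=v_0$), I choose controls $p_4^k,p_5^k$ steering the free dynamics from $v(T_k)$ to $0$ at $T_{k+1}$, and I set the global controls by concatenation, $p_i:=\sum_k p_i^k\,\mathbf 1_{I_k}$ for $i=4,5$. The crucial structural point is that the source is \emph{not} cancelled on $I_k$: inserting $(p_4^k,p_5^k)$ into the true equation \eqref{abstract_source} gives, on $I_k$, the representation $v(t)=\tilde v_k(t)+\int_{T_k}^{t}\mc S(t-s)f(s)\,ds$, where $\tilde v_k$ is the controlled free solution with $\tilde v_k(T_{k+1})=0$. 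Hence the state at the node is exactly the source residual $v(T_{k+1})=r_k:=\int_{T_k}^{T_{k+1}}\mc S(T_{k+1}-s)f(s)\,ds$, which then plays the role of the initial datum on $I_{k+1}$.

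Next I would verify $v(T)=0$ and the weighted estimate. Since $\norm{r_k}_{L^2}\le\int_{I_k}\norm{f(s)}_{L^2}\,ds$ is the tail of an integrable function, $v(T_k)=r_{k-1}\to 0$, and together with the local bounds this yields $v(t)\to 0$ in $L^2(\T)$ as $t\to T^-$, i.e.\ $v(T)=0$. For \eqref{estimate-weighted} I would estimate interval by interval: on $I_k$ the control and solution norms are controlled by $Me^{M/\rho_k}$ times $\norm{v(T_k)}_{L^2}=\norm{r_{k-1}}$, together with the local source mass $\int_{I_k}\norm{f}$. Converting the source mass through $\rho_\F$ (via $\int_{I_k}\norm{f}_{L^2}\le \rho_\F(T_k)\,\norm{\rho_\F^{-1}f}_{L^1(I_k;L^2)}$, using that $\rho_\F$ is non-increasing) and weighting by $\rho_0^{-1}$ (which attains its maximum $\rho_0(T_{k+1})^{-1}$ on $I_k$), each summand carries the factor $\exp\left(\frac{M}{\rho_k}+\frac{pM}{(q-1)(T-T_{k+1})}-\frac{(1+p)q^2M}{(q-1)(T-T_k)}\right)$. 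With the geometric choice above this bracket is a negative multiple of $(T-T_k)^{-1}\sim q^{2k}$, so the series converges super-exponentially; the parameter restrictions \eqref{choice-p_q}, namely $1<q<\sqrt2$ and $p>q^2/(2-q^2)$, are precisely what secure this decay once all the $C([0,T];L^2)$, $L^2(H^2)$ and control contributions (the latter including the $t^{-1/2}$ smoothing factors of the analytic semigroup in the $H^2$ estimate) are accounted for. The $\norm{v_0}_{L^2(\T)}$ term on the right-hand side comes from the single $k=0$ summand $Me^{M/\rho_0}\rho_0(T_1)^{-1}\norm{v_0}$.

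The main obstacle is this weighted bookkeeping: matching the blow-up $e^{M/\rho_k}$ of the control cost against the decay encoded in $\rho_0$ and $\rho_\F$ so that the concatenated controls lie in $\V$, the solution lies in $\Y$, and the bound is uniform in $T$ (after reducing to $T<1$ as in \Cref{exact_control_lin_CH}). Everything else — local existence, uniqueness and regularity of $\tilde v_k$ on each $I_k$, the analytic maximal-regularity estimate giving the $L^2(H^2)$ control, and the passage to the limit establishing $v(T)=0$ in the strong topology — is routine once the summation is under control, and follows the scheme of \cite[Propositions~2.3 and~2.8]{LTT} verbatim.
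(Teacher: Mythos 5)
Your overall strategy is the right one: it is exactly the Liu--Takahashi--Tucsnak source-term scheme that the paper itself invokes (the paper offers no details and simply cites \cite[Propositions~2.3 and~2.8]{LTT}), with \Cref{exact_control_lin_CH} as the only external input. However, the weighted bookkeeping --- which, as you say yourself, is the heart of the matter --- contains a genuine error. First, there is a pairing mistake: in your own construction the datum at $T_k$ is the residual $r_{k-1}$ of the source over $I_{k-1}$, so the source factor entering the control estimate on $I_k$ is $\rho_\F(T_{k-1})$ (the maximum of the decreasing weight $\rho_\F$ over $I_{k-1}$), not $\rho_\F(T_k)$ as in your displayed exponent; the spurious gain $\rho_\F(T_k)/\rho_\F(T_{k-1})$, which is super-exponentially small, is precisely what makes your bracket look negative. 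Second, with the correct pairing, your grid $T-T_{k+1}=q^{-2}(T-T_k)$ is incompatible with the weights \eqref{def_weight_func}: a direct computation gives
\begin{equation*}
e^{M/\tau_k}\,\frac{\rho_\F(T_{k-1})}{\rho_0(T_{k+1})}
=\exp\left(\frac{Mq^{2k}}{(q-1)T}\left[\frac{q^{2}}{q+1}-1+p\,(q^{2}-1)\right]\right),
\qquad \tau_k:=T_{k+1}-T_k,
\end{equation*}
and under \eqref{choice-p_q} the bracket is in general \emph{positive} (for instance $q=1.2$ forces $p>1.44/0.56\approx 2.57$, hence $p(q^2-1)\ge 1.13$ while $\tfrac{q^2}{q+1}-1\approx-0.35$), so your series diverges super-exponentially instead of converging; it converges only in a small corner of the admissible region ($q$ near $1$ and $p$ near its lower bound). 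In particular, your claim that the restrictions $1<q<\sqrt 2$, $p>q^2/(2-q^2)$ ``are precisely what secure this decay'' is backwards: these conditions play no role in the linear weighted estimate (any $q>1$, $p>0$ suffices there); they are needed only afterwards, in the nonlinear fixed point, to guarantee $\rho_0^2/\rho_\F,\ \rho_0^3/\rho_\F\in C([0,T])$, see \eqref{est111}.

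The fix is to take the grid with ratio $q$, not $q^{2}$: set $T-T_k=q^{-k}T$. The weights \eqref{def_weight_func} are calibrated exactly for this choice, since then one has the identity
\begin{equation*}
e^{M/\tau_{k+1}}\,\rho_\F(T_k)=\rho_0(T_{k+2}),
\end{equation*}
the factor $q^{2}$ inside $\rho_\F$ accounting for the two-step shift $T_k\mapsto T_{k+2}$, and $1+p$ combining the cost exponent with the weight exponent. Consequently each block contributes the uniform factor $M$ --- there is no decay in $k$ at all --- and the estimate closes not through a convergent geometric series but through the $L^1$-in-time structure of the source norm: with $a_k:=\|f/\rho_\F\|_{L^1(I_k;L^2(\T))}$ one has $\sum_k a_k=\|f/\rho_\F\|_{L^1(0,T;L^2(\T))}$ and $\sum_k a_k^2\le\big(\sum_k a_k\big)^2$, which is what bounds $\|p_i/\rho_0\|_{L^2(0,T)}^2$ and, together with the analytic-semigroup smoothing you mention, the $C([0,T];L^2(\T))$ and $L^2(0,T;H^2(\T))$ norms of $v/\rho_0$. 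With this correction your argument becomes precisely the proof of \cite[Proposition~2.3]{LTT}, and the statement follows.
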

We are now in a position to prove \Cref{exact} using a standard fixed-point argument.
\subsubsection{\bf Proof of \Cref{exact}}

\begin{proof}
For any $f \in \mc S,$ consider the map 
\begin{align*}
f \xmapsto{\mc F} 6 v (\partial_x v)^2 + 6 \Phi (\partial_x v)^2 + 3 v^2 \partial^2_x v + 6 v \partial^2_x v\Phi + (p_4\mu_4 + p_5\mu_5) v,
\end{align*}
where \((v, p_4, p_5) \in \mc Y \times \mc V \times \mc V,\) is the solution of \eqref{abstract_source} which satisfies \eqref{estimate-weighted}.
We show that
\begin{itemize}
\item the map $\mathcal{F}$ is well defined on $\mathcal{S}$;
\item there exists $R>0$ such that 
$\mathcal{F}\big(B(0,R)\big) \subset B(0,R),$ where $B(0,R)$ denotes the closed ball in $\mathcal{S}$ centered at the origin with radius $R$;
\item there exists $R>0$ such that the map $\mc F : B(0, R) \to B(0, R)$ is a strict contraction map.
\end{itemize}
To establish the existence of a fixed point for $\mathcal{F}$, we use the Banach fixed-point theorem.
First, observe that
\begin{align*}
\| \mc F(f) \|_{\mc S}
\le 
\left\| \frac{6 v (\partial_x v)^2 + 6 \Phi (\partial_x v)^2 + 3 v^2 \partial^2_x v + 6 v \partial^2_x v\Phi}{\rho_{\mc S}} \right\|_{L^1(0,T;L^{2}(\mathbb{T}))}
+ 
\left\| \frac{(p_4\mu_4 + p_5\mu_5) v}{\rho_{\mc S}} \right\|_{L^1(0,T;L^{2}(\mathbb{T}))}.
\end{align*}
To estimate the first term in , we use an interpolation argument. 
As, $v \in C([0,T];L^{2}(\T)) \cap L^{2}(0,T;H^{2}(\T))$. 
Then, for all $t\in(0,T)$, there exists a constant $C_{1}>0$, independent of $t$ and $v$, such that
\begin{align*}
\|v(t,\cdot)\|_{H^{1}}
\le C_{1}\,\|v(t,\cdot)\|_{H^{2}}^{1/2}
\|v(t,\cdot)\|_{L^{2}}^{1/2}.
\end{align*}
Consequently, we obtain
\begin{align}\label{est113}
\|v\|_{L^{4}(0,T;H^{1}(\T))}
\le C_{1}\,\|v\|_{C([0,T];L^{2}(\T))}^{1/2}
\|v\|_{L^{2}(0,T;H^{2}(\T))}^{1/2}.
\end{align}
Note that, the assumption 
$p > \frac{q^2}{2 - q^2}$ implies 
$2p > (1 + p) q^2$ which further
implies that
\begin{equation}\label{est111}
	\frac{\rho_0^2}{\rho_{\mathcal S}} , \ \frac{\rho_0^3}{\rho_{\mathcal S}}\in C([0,T]).
\end{equation}
Therefore, using \eqref{est111} and \eqref{est113}, one can estimate nonlinear terms in the following manner.
\begin{align}\label{est114}
\notag&\norm{\frac{v(\pa_xv)^2}{\rho_{\mc S}}}_{L^1(0,T;L^2(\T))}\leq C\int_{0}^{T} \frac{\rho_0^3(t)}{\rho_{\mc S}(t)}\norm{\frac{v}{\rho_0}}_{H^1}^2\norm{\frac{v}{\rho_0}}_{H^2}\leq C\norm{\frac{v}{\rho_0}}^2_{L^{4}(0,T;H^{1}(\T))}\norm{\frac{v}{\rho_0}}_{L^{2}(0,T;H^{2}(\T))}\\
\notag & \hspace{8cm}\leq C \left\|\frac{v}{\rho_0}\right\|_{C([0,T];L^{2}(\T))}
\left\|\frac{v}{\rho_0}\right\|_{L^{2}(0,T;H^{2}(\T))}^2.\\
\notag &\norm{\frac{(\pa_xv)^2}{\rho_{\mc S}}}_{L^1(0,T;L^2(\T))}+\norm{\frac{v\pa_x^2v}{\rho_{\mc S}}}_{L^1((0,T);L^2(\T))}\leq C \left\|\frac{v}{\rho_0}\right\|_{L^{2}(0,T;H^{2}(\T))}^2.\\
& \norm{\frac{v^2 \pa_x^2v}{\rho_{\mc S}}}_{L^1(0,T;L^2(\T))}\leq C\left\|\frac{v}{\rho_0}\right\|_{C([0,T];L^{2}(\T))}
\left\|\frac{v}{\rho_0}\right\|_{L^{2}(0,T;H^{2}(\T))}^2.
\end{align}
Combining \eqref{est114} and \eqref{estimate-weighted}, we deduce
\begin{align*}
\| \mc F (f) \|_{\mc S} 
&\le C \left( \left\| \frac{v}{\rho_0}\right\|_{C([0, T]; L^2(\T))} \left\| \frac{v}{\rho_0}\right\|_{L^2(0, T; H^2(\T))}^2 + \left\| \frac{v}{\rho_0}\right\|_{L^2(0, T; H^2(\T))}^2 \right)\\
&\hspace{2cm}+ C \left(
\left\| \mu_4 \frac{p_4}{\rho_0}  \frac{v}{\rho_0} \right\|_{L^1(0,T;L^2(\mathbb{T}))} + 
\left\| \mu_5 \frac{p_5}{\rho_0}  \frac{v}{\rho_0} \right\|_{L^1(0,T;L^2(\mathbb{T}))}\right)\\
& \le C_0 \Bigg[\left(\|v_0\|_{ L^2(\T)} + \left\|f\right\|_{\mc S} \right)^3+\left(\|v_0\|_{ L^2(\T)} + \left\|f\right\|_{\mc S} \right)^2\Bigg],
\end{align*}
for some positive constant $C_0,$ this, together with the uniqueness of $v$ in \Cref{Proposition-weighted}, proves the well-definedness of $\mathcal F$.

To ensure that \( B(0,R) \) is invariant under \( \mathcal F \) for some \( R>0 \),
 we choose
\[
0 < R < \min\left\{\frac{1}{4 C_0^{1/2}},\, \frac{1}{8 C_0}\right\}=:R_1.
\]
Then, by the above estimate, for any $v_0 \in L^2(\mathbb T)$ satisfying
$\|v_0\|_{L^2(\mathbb T)} \le R$, the closed ball $B(0,R)$ is invariant under
$\mathcal F$.

Consider any two $f, \overline f \in B(0, R).$ Then by the \Cref{Proposition-weighted} there exist controls $p_4, p_5, \overline p_4, \overline p_5 \in \mc V$ for the system \eqref{abstract_source} with solutions $v, \overline v \in \mc Y$ associated $f, \overline f.$ Then compute
\begin{align*}
& \| \mc F (f) - \mc F (\overline f) \|_{\mc S}\\
& \le 6 \left\| \frac{v (\pa_x v)^2 - \overline v (\pa_x\overline v)^2}{\rho_{\mc S}} \right\|_{L^1(0,T;L^{2}(\mathbb{T}))} + 6\Phi \left\| \frac{\Big((\pa_x v)^2 + v \pa_x^2v \Big) - \Big((\pa_x \overline v)^2 + \overline v \pa_x^2 \overline v \Big)}{\rho_{\mc S}} \right\|_{L^1(0,T;L^{2}(\mathbb{T}))}\\
&  + 3 \left\| \frac{v^2 \pa_x^2 v - \overline v^2 \pa_x^2 \overline v}{\rho_{\mc S}} \right\|_{L^1(0,T;L^{2}(\mathbb{T}))} 
+  \left\| \frac{\mu_4 (p_4 v- \overline p_4 \overline v)}{\rho_{\mc S}} \right\|_{L^1(0,T;L^{2}(\mathbb{T}))} + \left\| \frac{\mu_5 (p_5 v - \overline p_5 \overline v)}{\rho_{\mc S}} \right\|_{L^1(0,T;L^{2}(\mathbb{T}))}.
\end{align*}
Again, using \eqref{est111} and \eqref{est113}, one can estimate nonlinear terms one by one in the following way.
\begin{align}\label{est115}
&\left\| \frac{v (\pa_x v)^2 - \overline v (\pa_x\overline v)^2}{\rho_{\mc S}} \right\|_{L^1(0,T;L^{2}(\mathbb{T}))} \notag \\
& \le C \int_0^T \frac{\rho_0^3(t)}{\rho_{\mc S}(t)} \left\| \frac{v - \overline v}{\rho_0}\right\|_{L^2} \left\| \frac{v}{\rho_0}\right\|_{H^2} ^2 + C \int_0^T \frac{\rho_0^3(t)}{\rho_{\mc S}(t)} \left\| \frac{v - \overline v}{\rho_0}\right\|_{H^2} \left( \left\| \frac{v}{\rho_0}\right\|_{H^1}^2 + \left\| \frac{\overline v}{\rho_0}\right\|_{H^1}^2  \right)\notag \\
& \le C \left\| \frac{v - \overline v}{\rho_0}\right\|_{C([0, T]; L^2(\T))} \left\| \frac{v}{\rho_0}\right\|_{L^2(0, T; H^2(\T))}^2 +  C \left\| \frac{v - \overline v}{\rho_0}\right\|_{L^2(0, T; H^2(\T))} \Bigg[ \left\| \frac{v}{\rho_0}\right\|_{L^4(0, T; H^1(\T))}^2 \notag \\
& \hspace{8cm}+ \left\| \frac{ \overline v}{\rho_0}\right\|_{L^4(0, T; H^1(\T))}^2 \Bigg].
\end{align}
The second and third terms admit the following estimates.
\begin{align}\label{est116}
& \left\| \frac{\Big((\pa_x v)^2 + v \pa_x^2v \Big) - \Big((\pa_x \overline v)^2 + \overline v \pa_x^2 \overline v \Big)}{\rho_{\mc S}} \right\|_{L^1(0,T;L^{2}(\mathbb{T}))} \notag \\
& \le C \left\| \frac{v - \overline v}{\rho_0}\right\|_{L^2(0, T; H^2(\T))} \Bigg[ \left\| \frac{v}{\rho_0} \right\|_{C([0, T]; L^2(\T))}  + \left\| \frac{v}{\rho_0} \right\|_{L^2(0, T; H^2(\T))} + \left\| \frac{\overline v}{\rho_0} \right\|_{C([0, T]; L^2(\T))}  + \left\| \frac{\overline v}{\rho_0} \right\|_{L^2(0, T; H^2(\T))}\Bigg] \notag \\
& \hspace{4cm}+ C \left\| \frac{v - \overline v}{\rho_0}\right\|_{C([0, T]; L^2(\T))}\left\| \frac{\overline v}{\rho_0} \right\|_{L^2(0, T; H^2(\T))},
\end{align}
and 
 \begin{align}\label{est117}
&\left\| \frac{v^2 \pa_x^2 v - \overline v^2 \pa_x^2 \overline v}{\rho_{\mc S}} \right\|_{L^1(0,T;L^{2}(\mathbb{T}))} \le C \left\| \frac{v - \overline v}{\rho_0}\right\|_{L^2(0, T; H^2(\T))}  \left\| \frac{v}{\rho_0} \right\|_{C([0, T]; L^2(\T))}   \left\| \frac{v}{\rho_0} \right\|_{L^2(0, T; H^2(\T))} \notag \\
&\hspace{6cm}+ C \Bigg[ \left\| \frac{v - \overline v}{\rho_0}\right\|_{C([0, T]; L^2(\T))} +  \left\| \frac{v - \overline v}{\rho_0}\right\|_{L^2(0, T; H^2(\T))}\Bigg]\times \notag\\
&\Bigg[ \left( \left\| \frac{v}{\rho_0} \right\|_{C([0, T]; L^2(\T))}  + \left\| \frac{v}{\rho_0} \right\|_{L^2(0, T; H^2(\T))} + \left\| \frac{ \overline v}{\rho_0} \right\|_{C([0, T]; L^2(\T))}  + \left\| \frac{\overline v}{\rho_0} \right\|_{L^2(0, T; H^2(\T))} \right) \left\| \frac{\overline v}{\rho_0} \right\|_{L^2(0, T; H^2(\T))} \Bigg]
\end{align}

Finally, from the above estimates \eqref{est115}-\eqref{est117} we have
\begin{align*}
\| \mc F (f) - \mc F (\overline f) \|_{\mc S} \le C \| v - \overline v \|_{\mc Y} \Bigg[\left(\|v_0\|_{ L^2(\T)} + \left\|f\right\|_{\mc S} \right)^2+\left(\|v_0\|_{ L^2(\T)} + \left\|f\right\|_{\mc S} \right)\Bigg] \\
+ C \left(\|v_0\|_{ L^2(\T)} + \left\|f\right\|_{\mc S}\right)
\Big[ \|p_4 - \overline p_4 \|_{\mc V} + \|p_5 - \overline p_5 \|_{\mc V  } \Big].
\end{align*}
We have shown that for any $v_0 \in L^2(\mathbb T)$ satisfying
$\|v_0\|_{L^2(\mathbb T)} \le R$, the closed ball $B(0,R)$ is invariant under $\mc F.$ Using this fact 
\begin{align*}
\| \mc F (f) - \mc F (\overline f) \|_{\mc S} \le 4 R^2C \| v - \overline v \|_{\mc Y} + 2RC 
\Big[  \| v - \overline v \|_{\mc Y} +\|p_4 - \overline p_4 \|_{\mc V} + \|p_5 - \overline p_5 \|_{\mc V  } \Big].
\end{align*}
By the linearity of the solution associated with \eqref{abstract_source}, it follows from \cref{Proposition-weighted} that
\begin{align*}
\| \mc F (f) - \mc F (\overline f) \|_{\mc S} & \le (4 R^2 C_1 + 2RC_1) \| f - \overline f\|_{\mc S}\\
& \le \frac{1}{2} \| f - \overline f\|_{\mc S},
\end{align*}
by further choosing 
\[
0 < R < \min\left\{ R_1, \frac{1}{4 C_1^{1/2}},\, \frac{1}{8 C_1}\right\}.
\]
 For the above choice of \( R \), let \( v_0 \in L^2(\mathbb T) \) satisfy
\[
\|v_0\|_{L^2(\mathbb T)} \le R.
\]
By the Banach fixed point theorem, the map
\[
\mathcal F : B(0,R) \to B(0,R)
\]
admits a unique fixed point, denoted by \( \widehat f \in B(0,R) \).

Thanks to \Cref{Proposition-weighted}, there exist controls and the corresponding solution
\[
(v, p_4, p_5) \in \mathcal Y \times \mathcal V \times \mathcal V
\]
to the system \eqref{abstract_source} associated with the source term \( \widehat f \in B(0,R) \), which satisfy \eqref{estimate-weighted}. By the definition of the space \( \mathcal Y \) and the property \( \rho_0(T)=0 \), we conclude that the equation \eqref{abstract_source} is locally null controllable. This completes the proof of \cref{exact}.
\end{proof}
\subsection{Proof of \Cref{global_exact}}
The proof of \Cref{global_exact} follows from \Cref{main_thm_approx} and \Cref{exact}. 
Indeed, we divide the time interval $[0,T]$ into two subintervals $[0,T/2]$ and $[T/2,T]$. 
We choose the radius $R$ appearing in \Cref{exact} depending on $\Phi$ and $T/2$. 
By \Cref{main_thm_approx}, for this prescribed $R>0$, there exist controls 
$(p_1,p_2,p_3) \in L^{2}((0,T/2);\mathbb{R}^3)$ such that
\[
\|u(T/2)-\Phi\|_{H^s} < R.
\]
We then apply \Cref{exact} on the interval $[T/2,T]$ to conclude global exact controllability, 
using two additional controls $(p_4,p_5) \in L^{2}((T/2,T);\mathbb{R}^2)$. \qed

\subsection{Kuramoto-Sivashinsky equation}
In this section, we prove the global exact controllability of the Kuramoto--Sivashinsky equation
\begin{align*}
\begin{cases}
\partial_t u + \partial_x^4 u + \partial_x^2 u + u\,\partial_x u
= \mu_4 p_4 u, & t>0,\ x \in \mathbb{T},\\
u(0,x) = u_0(x), & x \in \mathbb{T}.
\end{cases}
\end{align*}
We only indicate the changes in the proof with respect to the previous case of the Cahn--Hilliard equation.
To this end, we set $v = u - \Phi$. Then $v$ solves the following control problem.
\begin{align}\label{ctrl_prblm5}
	\begin{cases}
		\partial_t v + \partial^4_{x} v + \partial^2_{x}v = -v\partial_x v+ p_4\mu_4 \Phi+ p_4\mu_4 v, &  (t,x) \in (0, T) \times \mathbb{T},\\
		v(0, x) = v_0(x):=u_0-\Phi, & x \in \mathbb{T}.
	\end{cases}
\end{align}
\subsubsection{\bf Controllability of the linearized system}First, let us consider the linearized control problem
\begin{align}\label{ctrl_prblm5_lin}
	\begin{cases}
		\partial_t v + \partial^4_{x} v +  \partial^2_{x}v+\Phi \partial_x v = p_4\mu_4 \Phi, &  (t,x) \in (0, T) \times \mathbb{T},\\
		v(0, x) = v_0(x), & x \in \mathbb{T}.
	\end{cases}
\end{align}
We recall that \eqref{ctrl_prblm5_lin} can equivalently be rewritten in the abstract form
\begin{equation}\label{abstract}
\begin{cases}
	\dfrac{d}{dt}v = \mc Av + \mc B p_4, & t \in (0,T),\\
	v(0) = v_0.
\end{cases}
\end{equation}
where 
the operator $\mc A : \mc D(\mc A)\subset L^2(\T)\to L^2(\T)$ is thus given by
\begin{equation*}
\mc Av =  -\pa_x^4 v- \pa_x^2 v-\Phi \pa_x v, \text{ with } \mc D(\mc A):=H^4(\T).
\end{equation*}
Clearly, $\mc A$ is densely defined, and its adjoint $ \mc A^*:\mc D(\mc A^*)\subset L^2(\T)\to L^2(\T)$ is
\begin{equation*}
\mc A^* v = -\pa_x^4 v- \pa_x^2 v+\Phi \pa_x v, \text{ with } \mc D(\mc A^*):=H^4(\T). \end{equation*}
 We can prove that $\mc A$ generates a strongly continuous semigroup $\{\mc S(t)\}_{t\ge 0}$. On the other hand, the operator $\mc{B} \in \mathcal L(\mathbb{R}, L^2(\T))$ satisfies $\mc Bp_4:=p_4\mu_4\Phi.$ 
The eigen-element of the operator $\mc A^*$ is $$\text{ Eigenvalues}: \lambda_k=-k^4+ k^2+ik \Phi, \,\text{ Eigenfunctions }: \phi_k=\frac{1}{\sqrt{2\pi}}e^{ikx}, k\in \mathbb{Z}.$$ 
As an illustrative example, let us define
$
\mu_4(x) = x^2 (x - 2\pi)^2,\, x \in [0, 2\pi],
$
and extend it by $2\pi$-periodicity so that $\mu_4 \in H^1(\mathbb{T})$. A direct computation shows that
\begin{equation*}
	\ip{\mu_4}{\phi_k}_{L^2(\mathbb{T})}
	= -\frac{24\sqrt{2\pi}}{k^4}, \qquad k \in \mathbb{Z}\setminus\{0\}, \text{ and } \ip{\mu_4}{\phi_0}_{L^2(\mathbb{T})} > 0.
\end{equation*}
In particular, we deduce the existence of $C > 0$ and $\theta=2$, such that
\begin{equation}\label{est_q0}
\forall k \in \mathbb{Z}, \qquad
(k^4 + 1)\,
\bigl| \ip{\mu_4}{\phi_k}_{L^2(\mathbb{T})} \bigr|
\geq C.\end{equation}
 
\begin{theorem}\label{exact_control_lin}
	Let $T>0$ be given and assume that $\mu_4 \in H^1(\T)$ satisfies \eqref{est_q1}. Then for every  $v_0\in L^2(\T)$, there exists a control $p_4\in L^2(0,T)$ such that equation \eqref{ctrl_prblm5_lin} satisfies $v(T)=0$. Moreover, the control satisfies
	\begin{equation}\label{cost}
		\norm{p_4}_{L^2(0,T)}\leq Ce^{\frac{C }{T}}\norm{v_0}_{L^2(\T)},
	\end{equation} 
	for some constant $C>0$ which is independent of $T$ and $v_0$.
\end{theorem}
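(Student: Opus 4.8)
The plan is to follow the method of moments exactly as in the proof of \Cref{exact_control_lin_CH}; the only structural difference is that the transport term $\Phi\pa_x$ renders the spectrum of $\mc A^*$ complex and \emph{simple}, so that a single scalar control suffices. First I would set up the duality between \eqref{ctrl_prblm5_lin} and the adjoint system $-\frac{d}{dt}\phi=\mc A^*\phi$, $\phi(T)=\phi_T$. Testing the equation against the adjoint solution $\phi(t)=e^{\lambda_k(T-t)}\phi_k$ associated with $\phi_T=\phi_k=\frac{1}{\sqrt{2\pi}}e^{ikx}$ and imposing $v(T)=0$, the controllability requirement reduces, after the change of variable $h_4(t)=p_4(T-t)$, to the moment problem
\begin{equation*}
\int_0^T h_4(t)\,e^{\overline{\lambda_k}\,t}\,dt
= -\frac{e^{\overline{\lambda_k}T}\,\ip{v_0}{\phi_k}_{L^2(\T)}}{\Phi\,\ip{\mu_4}{\phi_k}_{L^2(\T)}}
=: m_k, \qquad k\in\mathbb{Z}.
\end{equation*}
Here the denominators are nonzero for every $k$ by \eqref{est_q1}, which is exactly the observability condition ensuring all modes are steered by the single potential $\mu_4$; this is available only because the $\lambda_k=-k^4+k^2+ik\Phi$ are pairwise distinct. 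Since $\overline{\lambda_k}=\lambda_{-k}$, the exponents $\{\overline{\lambda_k}\}_{k\in\mathbb{Z}}$ coincide as a set with $\{\lambda_k\}_{k\in\mathbb{Z}}$, and after enumerating $\mathbb{Z}$ the problem becomes that of constructing a biorthogonal family to $\{e^{\overline{\lambda_k}t}\}$.

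Second, I would apply \cite[Theorem IV.1.10]{Boyer23} to the shifted family $\Lambda_k:=1-\overline{\lambda_k}=1+k^4-k^2+ik\Phi$, which has $\Re\Lambda_k\ge 1>0$. The sector condition (H1) holds because the ratio $|\Im\Lambda_k|/\Re\Lambda_k=|k|\Phi/(1+k^4-k^2)$ is maximised at $k=\pm1$, so any $\theta$ with $\sinh\theta>\Phi$ works; the counting estimate (H2) follows from $|\Lambda_k|\ge\Re\Lambda_k\gtrsim k^4$, giving $\mc N_\Lambda(r)\le Cr^{1/4}$. Crucially, the gap condition (H3) is automatic here: for $m\neq n$,
\begin{equation*}
|\Lambda_m-\Lambda_n|\ge|\Im(\Lambda_m-\Lambda_n)|=|m-n|\,\Phi\ge\Phi,
\end{equation*}
so the imaginary parts produced by the drift supply a uniform gap $\rho=\Phi$ with no extra hypothesis on $\mu_4$ --- this is the main simplification relative to the Cahn--Hilliard case. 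The theorem then furnishes $\{e_k\}\subset L^2(0,T)$ with $\int_0^T e_k(t)e^{-\Lambda_j t}\,dt=\delta_{kj}$ and $\|e_k\|_{L^2(0,T)}\le Ce^{C(\sqrt{|\Lambda_k|}+1/T)}$.

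Third, writing $e^{-\Lambda_k t}=e^{-t}e^{\overline{\lambda_k}t}$, the functions $g_k:=e^{-t}e_k$ are biorthogonal to $\{e^{\overline{\lambda_k}t}\}$, so I would define $h_4:=\sum_{k\in\mathbb{Z}}m_k\,e^{-t}e_k$ and $p_4(t):=h_4(T-t)$, which solves the moment problem. For the cost, $\|h_4\|_{L^2}\le\sum_k|m_k|\,\|e_k\|_{L^2}$ since $|e^{-t}|\le1$ on $[0,T]$; using $|\ip{\mu_4}{\phi_k}|^{-1}\le C(k^{2\theta}+1)$ from \eqref{est_q1}, $|e^{\overline{\lambda_k}T}|=e^{(-k^4+k^2)T}$, and $\sqrt{|\Lambda_k|}\lesssim k^2$, Cauchy--Schwarz reduces the estimate to the convergence of a series of the form $\sum_k(k^{2\theta}+1)^2e^{2(-k^4+k^2)T+Ck^2+C/T}$ times $\|v_0\|_{L^2(\T)}^2$. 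Young's inequality $Ck^2\le\frac{C^2}{C_1T}+\frac{C_1}{4}k^4T$ lets the quartic decay $e^{-C_1k^4T}$ absorb the quadratic exponent and the polynomial factor for $|k|$ large, yielding $\|p_4\|_{L^2(0,T)}\le Ce^{C/T}\|v_0\|_{L^2(\T)}$. As in \Cref{exact_control_lin_CH}, this is first proved for $T<1$ and then extended to $T\ge1$ by continuation by zero.

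The genuinely new point, and the place requiring the most care, is the passage to a \emph{complex} exponential family: one must verify that \cite[Theorem IV.1.10]{Boyer23} is applicable with the sector, counting, and gap conditions formulated for $\Lambda_k\in\mathbb{C}$, and track the complex conjugates consistently through the duality identity and the definition of $m_k$. Once the sector angle $\theta$ (depending on $\Phi$) and the gap $\rho=\Phi$ are secured, the remainder is routine and parallels the Cahn--Hilliard argument; the essential mechanism is that the drift $\Phi\pa_x$ both desymmetrises the spectrum, permitting a single control, and generates the spectral gap in the imaginary direction.
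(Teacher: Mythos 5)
Your proposal is correct and follows essentially the same route as the paper: reduction to a moment problem via duality, application of \cite[Theorem IV.1.10]{Boyer23} to the shifted family $\Lambda_k$ (verifying the sector, counting, and gap conditions exactly as the paper does, with the gap supplied by the imaginary parts $k\Phi$), construction of the control as a series against the biorthogonal family weighted by $e^{-t}$, and the same Young-inequality absorption to get the cost $Ce^{C/T}\|v_0\|_{L^2(\T)}$. Your only deviations are presentational refinements — explicitly tracking the complex conjugates $\overline{\lambda_k}=\lambda_{-k}$ in the duality identity and spelling out the gap estimate that the paper calls ``obvious'' — which make the argument slightly more careful but do not change the method.
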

\begin{proof}Using arguments similar to those in the proof of \Cref{exact_control_lin_CH}, we obtain the following identity, which is equivalent to the null controllability problem for the concerned system.
\begin{align}\label{moment}
\nonumber	-\frac{e^{\lambda_k T}\ip{v_0}{\phi_k}_{L^2(\T)}}{\Phi \ip{\mu_4}{\phi_k}_{L^2(\T)}}=&\int_{0}^{T}p_4(t) e^{\lambda_k(T-t)}dt\\
=&\int_{0}^{T}h(t) e^{\lambda_k t}dt \, \quad
 \forall \, k \, \in \mathbb{Z},
\end{align}
where $h(t)=p_4(T-t).$ Thus, it is enough to find the existence and suitable norm estimate for $h.$
Using the bijection  $\sigma:\mathbb{N}^*\mapsto \mathbb{Z}$, defined by
\begin{equation*}
\sigma(m)=
\begin{cases}
	{\frac{m}{2}}, & \text{if } m \text{ is even},\\
	{\frac{1-m}{2}}, & \text{if } m \text{ is odd},
\end{cases}
\end{equation*}
we denote, for all $k\in \mathbb{N}^*,$ $\Lambda_k=-\lambda_{\sigma(k)}+1.$ Let us also set $\Lambda =\{\Lambda_k, k\in \mathbb{N}^*\}.$
Our next goal is to check that the sequence $\Lambda_k$ satisfies all the hypotheses of \cite[Theorem IV.1.10]{Boyer23}.
\begin{itemize}
	\item[H1:] There exists $\theta > 0$ such that the family $ \Lambda\subset \mathbb{C}$
	satisfies the following sector condition with parameter $\theta$ 
	\begin{equation*}
		\Lambda \;\subset	S_{\theta} \;\stackrel{\text{def}}{=}\; 
		\left\{\, z \in \mathbb{C} \;\middle|\; 
		\Re z > 0,\; | \Im z | < (\sinh \theta)\, (\Re z) \right\}.
	\end{equation*}
	 By the definition of $\Lambda,$ it is clear that $\Re \Lambda_k>0,$ for all $k\in \mathbb{N}^*.$ Furthermore, as $|\Im \Lambda_k|< C\Phi (\Re \Lambda_k)$, for some $C>0$, the required condition is verified with some suitable $\theta>0.$
	\item[H2:] Let $\kappa > 0 $. Define the counting function $
\mc	N_{\Lambda}(r) := \#\{\, \lambda \in \Lambda : |\lambda| \le r \,\}.
	$ The family 
$	\Lambda \subset \mathbb{C} $ satisfies the asymptotic assumptions 
	$$
	\mc N_{\Lambda}(r) \le \kappa\, r^{1/4}, \qquad \forall \quad r > 0.
	$$
	Set $s=\sigma(k)\in\mathbb Z$. Using the definition of $\lambda_s$,
	$\Lambda_k=-\lambda_s+1=s^{4}-s^{2}+1 - i s\Phi.$
	Thus $\Re\Lambda_k=s^{4}-s^{2}+1$, and therefore
	$|\Lambda_k|\ge |\Re\Lambda_k|=s^{4}-s^{2}+1.$
	For $s\in \mathbb{Z},$ we have $s^{4}-s^{2}+1\ge \tfrac12 s^{4}$. Hence, if 
	$|\Lambda_k|\le r,$ then
$	\frac12 s^{4}\le r$ which yields
	$|s|\le (2r)^{1/4}.$ Therefore the number of integer $s$ satisfies the above inequality is $1+2(2r)^{1/4}$. Therefore, we have proved that $
	\mc N_{\Lambda}(r) \le 1+2(2r)^{1/4}.$ Required bound for the counting function and the existence of $\kappa$ is now straightforward. 
	
	\item[H3:] Let $\rho > 0$ be given. The family $\Lambda$
	satisfies the gap condition with parameter $\rho$ if we have
	$$
	|\Lambda_m - \Lambda_n| \ge \rho, \qquad \forall\, m \ne n \in \mathbb{N}^*.
	$$
	This condition is obvious with $\rho=\Phi$.
	\end{itemize}
Thus using \cite[Theorem IV.1.10]{Boyer23}, there exists a sequence
$\{e_k\}_{k\in\mathbb{N}^*} \subset L^2(0,T)$
such that, for all \(k,\, j \in \mathbb{N}^*\),
\begin{equation}\label{biorth}
\int_{0}^{T} e_k(t)\, e^{-\Lambda_j t}\, dt = \delta_{k,j},
\end{equation}
and there exists a constant $C>0$ independent of $T$ such that
for all $ k\,\in\mathbb{N}^*$ 
\begin{equation*}\|e_k\|_{L^2(0,T)} \le C\, e^{C\left(\sqrt{\Re(\Lambda_k)} + 1/T\right)}.
\end{equation*}
We set, for $k\in\mathbb{Z}$ and $t\in[0,T]$,
\begin{equation*}
\psi_k(t)=e^{-t}\, e_{\sigma^{-1}(k)}(t).
\end{equation*}
For any $k, j \in \mathbb{Z}$, using \eqref{biorth}
\begin{align*}
\int_{0}^{T} \psi_k(t)\, e^{\lambda_j t} \, dt
= \int_{0}^{T} e_{\sigma^{-1}(k)}(t)e^{-\Lambda_{\sigma^{-1}(j)} t} dt=\delta_{k,j}.
\end{align*}
Moreover, we have
\begin{align}\label{norm_psi}
	\norm{\psi_k}_{L^2(0,T)}\leq  C\, e^{C k^2 + \frac{C}{T}}, \quad \, \forall \, k \, \in \mathbb{Z}.
\end{align}
Let us now define the control function $h$ as follows:
\begin{align*}
	h(t):=-\sum_{k \in \mathbb{Z}}\frac{e^{\lambda_k T}\ip{v_0}{\phi_k}_{L^2(\T)}}{\Phi \ip{\mu_4}{\phi_k}_{L^2(\T)}} \psi_k(t).
	\end{align*}
Clearly this $h$ satisfies \eqref{moment}. We just need to show that $h\in L^2(0,T).$
Thus using \eqref{est_q1} and \eqref{norm_psi},
\begin{align*}
	\norm{h}_{L^2(0,T)}&\leq C \sum_{k \in Z} (k^{2\theta}+1) e^{C k^2 + \frac{C}{T}}e^{(-k^4+k^2) T} \norm{v_0}_{L^2(\T)}\\
	&\leq C\left( e^{CT}+\sum_{|k|\geq 2 } e^{C k^2 + \frac{C}{T}}e^{-\frac{k^4}{3} T}\right) \norm{v_0}_{L^2(\T)}.
\end{align*}
Using Young's inequality we have $Ck^2\leq \frac{C^2}{T}+\frac{k^4T}{4},$ and putting this in the above estimate there exists constant $C>0$ such that
\begin{align*}
	\norm{h}_{L^2(0,T)}&\leq C e^{\frac{C}{T}}\norm{v_0}_{L^2(\T)}.
\end{align*}
This completes the proof.
	\end{proof}
The proof of local exact controllability for the Kuramoto--Sivashinsky equation follows from arguments analogous to those used in \Cref{sec_source_term} for the Cahn--Hilliard equation. 
The proof of global exact controllability, namely \Cref{global_exact1}, follows the same lines as the proof of \Cref{global_exact}. 
We omit the details for brevity.

\bigskip

\paragraph{\bf \emph{Acknowledgement}}
 Subrata Majumdar received financial support from the DGAPA post-doctoral scholarship (POSDOC) of the Universidad Nacional Autónoma de México and UNAM-DGAPA-PAPIIT grant IN117525 (Mexico). Debanjit Mondal received the Integrated PhD Fellowship from IISER Kolkata and gratefully acknowledges Shirshendu Chowdhury for introducing him to the geometric control perspective and for drawing his attention to the related literature. He also thanks Sandip Samanta for fruitful discussions on saturating subspaces.

\bibliographystyle{alpha}
\bibliography{reference}

\bigskip \bigskip
\noindent


\bigskip
\bigskip 
\begin{flushleft}
	
	\textbf{Subrata Majumdar\,\orcidlink{0000-0001-6724-6943}}\\
	Instituto de Matemáticas\\
	Universidad Nacional Autónoma de México,\\ 
	Circuito Exterior, Ciudad Universitaria\\
	04510 Coyoacán, Ciudad de México, México\\
	\texttt{subratamajumdar634@gmail.com, subrata.majumdar@im.unam.mx}\\
	
\end{flushleft}
\medskip
\begin{flushleft}
	 \textbf{Debanjit Mondal\,\orcidlink{0009-0002-8709-3774}}\\
	Department of Mathematics and Statistics\\
	Indian Institute of Science Education and Research Kolkata, \\
	Campus road, Mohanpur, West Bengal 741246, India \\
	\texttt{wrmarit@gmail.com, dm20ip005@iiserkol.ac.in}\\
	
\end{flushleft}

\end{document}